\newcommand{\nocontentsline}[3]{}
\newcommand{\tocless}[2]{\bgroup\let\addcontentsline=\nocontentsline#1{#2}\egroup}
\DeclareMathOperator{\Spec}{Spec}
\DeclareMathOperator{\CH}{CH}
\DeclareMathOperator{\length}{length}
\numberwithin{equation}{subsection}
\theoremstyle{plain}
\newtheorem{theorem}{Theorem}[section]
\newtheorem{proposition}[theorem]{Proposition}
\newtheorem{lemme}[theorem]{Lemma}
\newtheorem{lemma}[theorem]{Lemma}
\newtheorem{corollaire}[theorem]{Corollary}
\theoremstyle{definition}
\theoremstyle{remark}
\newcommand{\R}{\mathbb{R}}
\newcommand{\Z}{\mathbb{Z}}
\newcommand{\Q}{\mathbb{Q}}
\newcommand{\F}{\mathbb{F}}
\newcommand{\A}{\mathbb{A}}
\newcommand{\C}{\mathbb{C}}
\newcommand{\D}{\mathcal{D}}
\newcommand{\cB}{\mathcal{B}}
\newcommand{\cO}{\mathcal{O}}
\newcommand{\cM}{\mathcal{M}}
\newcommand{\cY}{\mathcal{Y}}
\newcommand{\cZ}{\mathcal{Z}}
\newcommand{\scrS}{\mathscr{S}}
\newcommand{\bF}{\mathbb{F}}
\newcommand{\fP}{\mathfrak{P}}
\newcommand{\inj}{\hookrightarrow}
\title[]{Picard rank jumps for K3 surfaces with bad reduction}
\author{Salim Tayou}
\address{Department of Mathematics, Harvard University, 1 Oxford St, Cambridge, MA 02138, USA}
\email{tayou@math.harvard.edu}
\date\today
\begin{document}

\begin{abstract}
Let $X$ be a K3 surface over a number field. We prove that $X$ has infinitely many specializations where its Picard rank jumps, hence extending our previous work with Shankar--Shankar--Tang to the case where $X$ has bad reduction. We prove a similar result for generically ordinary non-isotrivial families of K3 surfaces over curves over $\overline{\mathbb{F}}_p$ which extends previous work of Maulik--Shankar--Tang. As a consequence, we give a new proof of the ordinary Hecke orbit conjecture for orthogonal and unitary Shimura varieties. 
\end{abstract}
\thanks{}
\maketitle
\setcounter{tocdepth}{1}
\tableofcontents

\section{Introduction}
Let $X$ be a K3 surface over a number field $K$. Let $\mathscr{X}\rightarrow \mathscr{S}$ be a smooth projective model, where $\mathscr{S}\hookrightarrow \mathrm{Spec}(\mathcal{O}_K)$ is an open subset of the spectrum of the ring of integers $\mathcal{O}_K$. For every place $\mathfrak{P}$ of $\mathcal{O}_K$ with finite residual field $k(\mathfrak{P})$, we have an injective specialization map: 
\[\mathrm{Pic}(X_{\overline{K}})\hookrightarrow \mathrm{Pic}(\mathscr{X}_{\overline{k(\mathfrak{P})}}),\]
and both groups have finite rank, \emph{the Picard rank}, denoted $\rho(X_{\overline{K}})$ and $\rho(\mathscr{X}_{\overline{k(\mathfrak{P})}})$ respectively. 

Inspired by the classical density result of Noether--Lefschetz loci for weight $2$ polarized variations of Hodge structures, see \cite{voisin,oguiso}, Charles asked in \cite{charles-Picard-number} what can be said about the \emph{arithmetic Noether--Lefschetz locus}:
\[NL=\{\mathfrak{P}\in \mathscr{S} |\, \rho(X_{\overline{K}})<\rho(\mathscr{X}_{\overline{k(\mathfrak{P})}})\}.\]

In a prior work \cite[Theorem 1.1]{sstt}, we proved that the set $NL$ is infinite under the additional assumption that $X$ has potentially everywhere good reduction, i.e., up to taking a finite extension of $K$, we assumed that $\mathscr{S}=\mathrm{Spec}(\mathcal{O}_K)$. The first main result of this paper is the following unconditional result.
\begin{theorem}\label{th:K3nf}
Let $X$ be a K3 surface over a number field $K$. Then the set $NL$ is infinite.
\end{theorem}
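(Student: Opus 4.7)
The plan is to extend the arithmetic-intersection strategy of \cite{sstt} to the bad-reduction setting. After a finite extension of $K$, to which the conclusion is insensitive, I would apply the Kuga--Satake construction to associate with $X$ a $K$-point $s$ of an orthogonal $\mathrm{GSpin}$-type Shimura variety $\mathrm{Sh}$ built from the primitive transcendental lattice of $X$. Over an integral model of $\mathrm{Sh}$, the Zariski closure of $s$ defines an arithmetic curve $\mathscr{Y}$. A Picard rank jump at a place $\mathfrak{P}$ is then equivalent to $\mathscr{Y}$ meeting some special divisor $\mathscr{Z}(m)$ above $\mathfrak{P}$ for some positive integer $m$, since extra line bundles on the reduction give rise, via Tate-type theorems for K3 surfaces due to Kisin--Madapusi~Pera, to extra special endomorphisms of the Kuga--Satake abelian variety.

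For each $m$, the next step is to compute the arithmetic intersection number $\widehat{\mathscr{Y}}\cdot\widehat{\mathscr{Z}}(m)$ in the arithmetic Chow group of a suitable toroidal integral model. The archimedean term is a Fourier coefficient of a Borcherds modular form, and by the Howard--Madapusi~Pera framework together with the Hodge-theoretic equidistribution input used in \cite{sstt}, it grows like $c\,m^{\alpha}$ for some positive constants $c,\alpha$. At a prime $\mathfrak{P}$ of \emph{good} reduction the local contribution is $O(\log m)$, via rigidity of special endomorphisms modulo $\mathfrak{P}$, exactly as in \cite{sstt}.

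The main new ingredient is a comparable bound at primes of \emph{bad} reduction, where neither the Shimura variety nor the Kuga--Satake abelian scheme extends smoothly, and this is where I expect the central difficulty. The plan is to pass, after a further finite base change, to a semistable or suitably blown-up integral model; to reinterpret $\mathscr{Z}(m)$ in a local chart near a bad fibre in terms of extra classes in the log-crystalline cohomology of the Kuga--Satake family; and to use $p$-adic Hodge-theoretic comparison together with the rigidity of the Hodge filtration to control the length of the zero-dimensional intersection $\mathscr{Y}\cdot\mathscr{Z}(m)$ above $\mathfrak{P}$. The target is a bound of the form $O_{\mathfrak{P}}(\log m)$, with implicit constant depending on $\mathfrak{P}$ and on the type of degeneration but not on $m$.

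Assembling these estimates, the finitely many bad primes together contribute at most $O(\log m)$, so the dominant $c\,m^{\alpha}$ archimedean growth must be absorbed at primes of good reduction. Hence $\mathscr{Y}\cap\mathscr{Z}(m)$ is nonempty modulo some prime of good reduction for every sufficiently large $m$. A pigeonhole argument, using that a single prime can contribute to only finitely many $m$ with bounded multiplicity, then produces infinitely many distinct primes $\mathfrak{P}\in\mathscr{S}$ witnessing a Picard rank jump, yielding \cref{th:K3nf}. The main obstacle, by a wide margin, is the uniform local bound on intersection lengths at primes of bad reduction.
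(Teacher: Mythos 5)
Your high-level scaffolding is right: pass to the Kuga--Satake point of a GSpin Shimura variety, spread out to an arithmetic curve $\mathscr{Y}$, encode Picard jumps via intersection with special divisors $\mathcal{Z}(m)$, use Borcherds modularity for the growth of the arithmetic self-intersection, and derive a contradiction with local bounds at finite places. This is the engine of both \cite{sstt} and the present paper. However, there are two genuine gaps in your plan, and the bad-reduction strategy you sketch is not the one that works.

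First, the claimed bound $O_\mathfrak{P}(\log m)$ at a finite place is too strong. Even at good primes, the pointwise Diophantine bound one can extract from modularity is $(\mathscr{Y}.\mathcal{Z}(m))_{\mathfrak{P}} = O(m^{b/2}\log m)$, and individual local intersections really can be polynomially large. What saves the argument is an \emph{averaged} estimate: $\sum_{m\in S_{D,X}} (\mathscr{Y}.\mathcal{Z}(m))_{\mathfrak{P}} = o(X^{(b+1)/2}\log X)$ over a sparse square family $S_{D,X}$. You need the average, not a pointwise logarithmic bound; without the averaging framework your contradiction does not close. Second, you do not account for the boundary corrections that arise when one works in a toroidal compactification. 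The divisor that fits into a modular generating series is not the closure of $\mathcal{Z}(m)$, but $\mathcal{Z}^{tor}(m) = \mathcal{Z}(m) + \sum_\Upsilon \mu_\Upsilon(m)\mathcal{B}^\Upsilon + \sum_{\Xi,\omega}\mu_{\Xi,\omega}(m)\mathcal{B}^{\Xi,\omega}$. One must then prove that the multiplicities $\mu_\Upsilon(m)$ and $\mu_{\Xi,\omega}(m)$ grow strictly slower than $m^{b/2}$ (the paper proves $\ll_\epsilon m^{b/2-1+\epsilon}$ and $\ll_\epsilon m^{(b-1)/2+\epsilon}$, using coefficient bounds for theta series and an equidistribution input for the type-III multiplicities) so that the archimedean term still dominates after subtracting the boundary. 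This is one of the main new contributions and cannot be elided.

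On the bad-reduction local estimate itself: the log-crystalline / $p$-adic Hodge comparison route you propose is not what is done, and it is not clear it could produce the required averaged bound. Instead, one first refines the cone decomposition so that $\mathscr{Y}$ meets the boundary of $\mathcal{M}^\Sigma$ only in locally closed boundary \emph{divisors} of type II or III (a blow-up of the toroidal model). Then the formal completion of $\mathcal{M}^\Sigma$ along such a stratum has an explicit description as a formal torus embedding over an abelian-scheme torsor (type II) or over a zero-dimensional Shimura variety (type III), and the completions of $\mathcal{Z}(m)$ there are cut out by explicit equations indexed by vectors $\lambda$ in a definite lattice. From these explicit equations one defines a decreasing chain of positive-definite lattices $L_n$ (those $\lambda$ whose branch meets $\mathscr{Y}$ with multiplicity $\geq n$), shows $pL_n \subseteq L_{n+1}$ and that the successive minima grow like $n^{1/(b+\epsilon)}$ using the height bound, and then applies a geometry-of-numbers count (the lemma from Eskin--Katznelson) to the lattice points of norm in $[X,2X)$ to get the $o(X^{(b+1)/2}\log X)$ averaged estimate. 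That explicit, combinatorial analysis of the toroidal formal neighborhood — not a semistable model with log-crystalline cohomology — is the missing ingredient your sketch does not supply.
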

This theorem is a particular instance of \Cref{t:main_sp_end} which is formulated for GSpin Shimura varieties and which has many other applications. As a consequence, Theorems 1.4, 1.6, and Corollary 1.7 in \cite{sstt} hold with no assumptions on the reduction type. In particular, we have the following theorem.
\begin{theorem}
Let $K$ be a number field and $A$ an abelian surface over $K$. Then there exists infinitely many places where $A$ has good reduction, and the reduction is geometrically non-simple.
\end{theorem}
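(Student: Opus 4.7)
My plan is to apply the paper's main technical result, Theorem~\ref{t:main_sp_end}, to the GSpin Shimura variety of signature $(3,2)$ via the exceptional isomorphism $\mathrm{Spin}(3,2)\simeq \mathrm{Sp}_4$. Once this translation is set up, the theorem follows from the same deduction as in \cite{sstt} for abelian surfaces with everywhere good reduction, the stronger input theorem now removing that reduction hypothesis.

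After a finite extension of $K$ I may assume that $A$ carries a principal polarization $\lambda$, so that $(A,\lambda)$ defines a $K$-point of the Siegel modular threefold $\mathcal{A}_2$. The five-dimensional quadratic space $L=\wedge^{2} H^{1}(A,\mathbb{Q})/\mathbb{Q}\cdot c_{1}(\lambda)$ has signature $(3,2)$ and carries a natural polarized Hodge structure of K3 type, and the exceptional isomorphism identifies $\mathcal{A}_2$ with a connected component of the GSpin Shimura variety attached to $L$. Under this identification, a special endomorphism of the Kuga--Satake motive at a place $\mathfrak{P}$, in the sense of Theorem~\ref{t:main_sp_end}, corresponds to a class in $\mathrm{End}^{0}(A_{\mathfrak{P}})$ that does not lift to $\mathrm{End}^{0}(A_{\overline{K}})$.

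Applying Theorem~\ref{t:main_sp_end} then yields infinitely many finite places $\mathfrak{P}$ of good reduction at which $\mathrm{End}^{0}(A_{\mathfrak{P}})$ is strictly larger than $\mathrm{End}^{0}(A_{\overline{K}})$. To upgrade such a jump to geometric non-simplicity I would argue as in \cite{sstt}: a simple abelian surface over an algebraically closed field has $\mathrm{End}^{0}$ in a short list of division algebras (real quadratic, indefinite quaternion over $\mathbb{Q}$, CM quartic field, or in positive characteristic the definite quaternion over $\mathbb{Q}(\sqrt{-p})$), and each fixed non-split type cuts out a proper Shimura subvariety (Humbert surface, Shimura curve, or CM point) of $\mathcal{A}_2$. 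If infinitely many of the reductions produced above remained simple, pigeonholing gives an infinite subfamily of places where $A_{\mathfrak{P}}$ acquires the same endomorphism-algebra type; by a standard specialization argument on Noether--Lefschetz cycles, this would force $A$ itself to acquire the corresponding extra endomorphism globally. Replacing $\mathcal{A}_2$ by the smaller Shimura variety on which $A$ then lies and iterating, I terminate at a Shimura datum in which $A$ is Hodge-generic, where the special endomorphisms produced by the main theorem necessarily correspond to non-simple reductions.

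The main difficulty is this final step: ensuring that the infinite family of Picard-rank jumps supplied by Theorem~\ref{t:main_sp_end} spreads across sufficiently many distinct Noether--Lefschetz cycles that all but finitely many of the corresponding reductions are actually non-simple, rather than, say, all sharing the same QM or CM structure. This is handled by the fact that Theorem~\ref{t:main_sp_end} produces special endomorphisms whose discriminants tend to infinity along a subsequence, combined with the dimension-counting and inductive argument outlined above.
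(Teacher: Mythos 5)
Your setup (reduce to the principally polarized case, translate to the $\mathrm{GSpin}(3,2)$ Shimura variety via the exceptional isomorphism, apply \Cref{t:main_sp_end}) matches the paper, which deduces this theorem by the argument of Theorem 1.4 of \cite{sstt} once the good-reduction hypothesis is removed. The gap is in your final step. The pigeonhole-plus-specialization argument does not hold: a simple abelian surface can have endomorphism algebra any of infinitely many real quadratic fields, quaternion algebras, or quartic CM fields, so ``same endomorphism-algebra type'' does not cut the places into finitely many usable classes; and, more fundamentally, even if infinitely many $A_{\mathfrak P}$ acquired the identical extra algebra, there is no ``standard specialization argument'' lifting it to $A_{\overline K}$ --- the specialization map on special endomorphisms is injective, not surjective, and its systematic failure to be surjective at infinitely many places is exactly what \Cref{t:main_sp_end} asserts. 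Lifting structure from an infinite but density-zero set of places would require input the theorem does not give, so the iteration you describe cannot get started.

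The correct mechanism is already built into the statement of \Cref{t:main_sp_end}: the auxiliary integer $D$ is there precisely so that the norm of the extra special endomorphism can be forced to be a perfect square, and that alone produces an isogeny factor, with no appeal to the classification of endomorphism algebras. Take $D$ a perfect square represented by $(L,Q)$ and coprime to $N$; for example $D=1$ works since $L$ contains a hyperbolic plane. The theorem gives infinitely many good places $\mathfrak P$ at which $A_{\mathfrak P}$ carries a special quasi-endomorphism $\varphi$ with $Q(\varphi)=Dm^2$. Under the $\mathrm{GSp}_4$--$\mathrm{GSpin}(3,2)$ dictionary, special endomorphisms are traceless symmetric quasi-endomorphisms, and such $\varphi$ satisfy $\varphi^2 = Q(\varphi)\cdot\mathrm{Id}$. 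When $Q(\varphi)=Dm^2$ is a perfect square, $\varphi^2-Dm^2\cdot\mathrm{Id}=0$ factors over $\Q$ with distinct rational roots, and the resulting eigenspace decomposition gives a nontrivial isogeny factor of $A_{\mathfrak P}$. Each such reduction is therefore geometrically non-simple directly.
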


\subsection{Picard rank jumps over function fields } 
Let $p\geq 5$ be a prime number. Let $\mathscr{X}\rightarrow \mathscr{S}$ be a family of K3 surfaces over a curve $\mathscr{S}$ over $\overline{\mathbb{F}}_p$. Let $\eta$ be the generic point of $\mathscr{S}$. For every $s\in\mathscr{S}(\overline{\mathbb{F}}_p)$, we have similarly an inequality of Picard ranks \[\rho(\mathscr{X}_{\overline{\eta}})\leq \rho(\mathscr{X}_{s}),\] 
and one can introduce similarly the Noether--Lefschetz locus as the subset of $\mathscr{S}$ where the above inequality is strict: 
\[NL=\{s\in \mathscr{S}(\overline{\mathbb F}_p)|\rho(\mathscr{X}_{\overline{\eta}})< \rho(\mathscr{X}_{s})\}.\] 
In \cite[Theorem 1.1]{maulik-shankar-tang-K3}, Maulik, Shankar and Tang proved that if $\mathscr{S}$ is proper and the family $\mathscr{X}\rightarrow \mathscr{S}$ is generically ordinary and not isotrivial then the set $NL$ is infinite. Our second main theorem in this paper is to remove the properness assumption in their result. 

\begin{theorem}\label{th:K3ff}
Let $\mathscr{X}\rightarrow \mathscr{S}$ be a generically ordinary non-isotrivial family of K3 surfaces over a smooth curve $\mathscr{S}$ over $\overline{\mathbb{F}}_p$  with $p\geq 5$. Suppose that the discriminant of the generic geometric Picard lattice is prime to $p$. Then the locus $NL$ is infinite.
\end{theorem}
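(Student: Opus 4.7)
The strategy is to adapt the global-versus-local intersection argument of \cite{maulik-shankar-tang-K3} to the non-proper setting by compactifying and carefully bounding the contribution at the boundary. First, I would attach to the family $\mathscr{X}\to \mathscr{S}$ a period morphism $f\colon \mathscr{S}\to \mathcal{M}$, where $\mathcal{M}$ is the special fiber (at $p$) of an integral canonical model of the GSpin Shimura variety associated to the primitive transcendental lattice of $\mathscr{X}_{\overline{\eta}}$; the assumption $\disc \perp p$ ensures the lattice is self-dual at $p$ and so $\mathcal{M}$ is smooth, while the $\mu$-ordinarity assumption forces $f$ to factor through the ordinary locus $\mathcal{M}^{\mathrm{ord}}$. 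A point $s\in \mathscr{S}(\overline{\mathbb{F}}_p)$ lies in $NL$ exactly when $f(s)$ admits an extra special endomorphism, so $NL$ is the preimage under $f$ of the union of Noether--Lefschetz divisors $\bigcup_{m\ge 1} Z(m)\subset \mathcal{M}$.

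Next I would choose a smooth projective compactification $\overline{\mathscr{S}}$ of $\mathscr{S}$ and, using Madapusi-Pera--type toroidal compactifications of $\mathcal{M}$ together with the valuative criterion for properness, extend $f$ to $\overline{f}\colon \overline{\mathscr{S}}\to \overline{\mathcal{M}}$. The Borcherds/Bruinier--Funke modularity of the generating series $\sum_{m\ge 0}[\overline{Z(m)}]q^m\in \Pic(\overline{\mathcal{M}})[[q]]$ pulls back, after pairing with $\overline{f}_*[\overline{\mathscr{S}}]$, to a classical modular form $F(\tau)=\sum_m a_m q^m$ of known weight. The $a_m$ are the global intersection numbers $\overline{\mathscr{S}}\cdot \overline{f}^*\overline{Z(m)}$, and standard bounds on coefficients of modular forms of this type show that $a_m$ grows polynomially in $m$ (in particular is nonzero for a positive density of $m$, assuming the non-isotriviality input yields a non-constant form).

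The heart of the proof is then to bound the local contribution $(\overline{\mathscr{S}}\cdot \overline{f}^*\overline{Z(m)})_s$ at every closed point $s\in \overline{\mathscr{S}}$ by an expression that grows sub-polynomially in $m$ for each fixed $s$. For $s$ in the interior, this is exactly the local bound of \cite{maulik-shankar-tang-K3}: using Serre--Tate/Kisin canonical lifts to $W(k(s))$ for the ordinary point $f(s)$, one compares the special endomorphisms of $f(s)$ with those of its canonical lift and extracts the precise power of $p$ measuring the obstruction. If $NL$ were finite, only finitely many such $s$ would contribute and the total would not keep pace with $|a_m|$. The genuinely new difficulty, and the main obstacle, is the boundary: one has to show that for each puncture $s\in \overline{\mathscr{S}}\setminus \mathscr{S}$, the local intersection number of $\overline{f}(\overline{\mathscr{S}})$ with $\overline{Z(m)}$ is bounded independently of $m$ (or by $O(m^{\epsilon})$). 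I would approach this via the explicit Borcherds product representation of $Z(m)$ in a formal neighborhood of the cusp of $\overline{\mathcal{M}}$, together with a tropical/valuation-theoretic analysis of how $\overline{f}$ meets the boundary strata, echoing the local-at-bad-places analysis that underlies \Cref{t:main_sp_end} for the number-field case.

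Comparing the polynomial lower bound on $|a_m|$ with the uniform upper bound on the sum of local contributions (which, under the hypothesis that $NL$ is finite, would involve only finitely many interior points plus the finitely many boundary points of $\overline{\mathscr{S}}$), one derives a contradiction for infinitely many $m$, forcing $NL$ to be infinite. The main obstacle I anticipate is the boundary estimate; everything else is essentially a translation of the proper case, but the cusp analysis requires genuinely new input, and is presumably the function-field counterpart of the bad-reduction analysis carried out in the proof of \Cref{th:K3nf}.
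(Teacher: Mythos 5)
Your overall strategy — compactify, use Borcherds modularity for a polynomial lower bound on the global intersection degree, then beat it by summing local multiplicities, with the boundary analysis being the genuinely new ingredient — is the same as the paper's, and your intuition about the boundary (formal local equations of $\mathcal{Z}(m)$ near the cusps, a valuation-theoretic/geometry-of-numbers analysis) matches what the paper does in Section 5.

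However, there is a genuine gap in the interior part of your argument. You claim that the local multiplicity at each fixed $s$ can be bounded ``sub-polynomially in $m$'' and that $f$ factors through the ordinary locus, so that finitely many interior points cannot keep pace with $|a_m|$. Neither claim is correct, and the gap is exactly the supersingular subtlety that is the heart of \cite{maulik-shankar-tang-K3}. Generic ordinarity only says the generic point is ordinary; the finite map $f\colon\mathscr{S}\to\mathcal{M}_{\mathbb{F}_p}$ can and in general will pass through non-ordinary, indeed supersingular, points, and the points in $NL$ are precisely where $\mathscr{S}$ meets special divisors — there is no reason these are ordinary. At a supersingular point $P$ the local multiplicity is \emph{not} sub-polynomial: summed over $m\in S_X$ it grows like $\alpha\sum_{m\in S_X} g_P(m)$, which is of the \emph{same} polynomial order $\asymp X^{(b+1)/2}$ as the global term. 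The contradiction in the paper (\Cref{p:local-good-ff} together with \Cref{p:globalff}) does not come from the local contributions being negligible; it comes from the constant $\alpha<1$, combined with the fact that the Hasse invariant is a section of $\mathcal{L}_{\mathbb{F}_p}^{\otimes(p-1)}$, so that $\sum_P h_P \le (p-1)(\overline{\mathscr{S}}.\mathcal{L}_{\mathbb{F}_p})$ and the supersingular local sum is bounded by $\alpha$ times the main term. Your argument, as stated, would simply fail at any supersingular intersection point.

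A further, smaller omission: when you pass to the compactified generating series, the divisor that is actually modular is not $\overline{\mathcal{Z}(m)}$ but $\mathcal{Z}^{\mathrm{tor}}(m)=\mathcal{Z}(m)+\sum_\Upsilon\mu_\Upsilon(m)\mathcal{B}^\Upsilon+\sum_{\Xi,\omega}\mu_{\Xi,\omega}(m)\mathcal{B}^{\Xi,\omega}$, and you need to control the boundary multiplicities $\mu_\Upsilon(m)$, $\mu_{\Xi,\omega}(m)$ (the content of \Cref{p:growth-multi}) in order to pass from modularity of $\mathcal{Z}^{\mathrm{tor}}(m)$ to an asymptotic for $(\overline{\mathscr{S}}.\mathcal{Z}(m)_{\mathbb{F}_p})$. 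This step does not appear in your outline and is not automatic.
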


The theorem is also a particular instance of \Cref{th:main-ff-general} for GSpin Shimura varieties, which has several other applications and also has an analogue for unitary Shimura varieties, see \Cref{unitary}. In particular, we have the following theorem which extends \cite[Theorem 1 (1)]{maulik-shankar-tang} to the quasi-projective case. 
\begin{theorem}
Let $A$ be a non-isotrivial ordinary abelian surface over the function field of a curve over $\overline{\mathbb{F}}_p$. Then $A$ has infinitely many smooth and non-simple specializations.
\end{theorem}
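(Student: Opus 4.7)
The plan is to deduce the result from \Cref{th:K3ff} via the Kummer K3 construction. After replacing $\mathscr{S}$ by a finite \'etale cover (which is harmless: a finite-to-one map preserves the ``infinitely many specializations'' conclusion, and both non-simplicity and smoothness descend), we may assume $A\to\mathscr{S}$ is principally polarized and carries enough level structure to globalize the fiberwise Kummer construction. Since $p\geq 5$, the resulting family $\cY\to \mathscr{S}$ with $\cY_s=\mathrm{Km}(A_s)$ is a smooth projective family of K3 surfaces. It then suffices to find infinitely many $s$ in the smooth locus of $A\to\mathscr{S}$ at which $A_s$ is geometrically non-simple.

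Next I would verify the three hypotheses of \Cref{th:K3ff} for $\cY$. Non-isotriviality of $\cY$ follows from non-isotriviality of $A$, since Kummer K3 surfaces are isomorphic only when the underlying abelian surfaces are isogenous. Generic ordinariness of $\cY$ follows from that of $A$, because the $F$-crystal of $\mathrm{Km}(A)$ is functorially determined by the $F$-crystal of $A$, and an ordinary abelian surface produces an ordinary Kummer K3. The generic geometric Picard lattice of $\cY$ is, up to a $2$-power index, an orthogonal sum of $\mathrm{Pic}(A_{\bar\eta})$ (generically of rank $1$, generated by the principal polarization of self-intersection $2$) and the Kummer lattice $\Pi$ of rank $16$ and $2$-power discriminant, so the total discriminant is a power of $2$ and hence prime to $p$.

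Applying \Cref{th:K3ff} to $\cY$ yields an infinite set $\Sigma\subset\mathscr{S}(\overline{\mathbb{F}}_p)$ at which $\rho(\cY_s)>\rho(\cY_{\bar\eta})$, equivalently $\rho(A_s)\geq 2$, i.e., $A_s$ acquires a non-polarization class in $\mathrm{NS}$. The main obstacle is the last step: extracting from $\Sigma$ those $s$ for which $A_s$ is \emph{geometrically non-simple}, rather than geometrically simple with quartic CM (both cases yield $\rho(A_s)=2$ over $\overline{\mathbb{F}}_p$, so the plain Picard-rank jump does not distinguish them). To handle this I would appeal to the refined form of \Cref{th:K3ff} emerging from its proof: the rank jumps come from intersections with infinitely many \emph{distinct} special divisors of the ambient GSpin Shimura variety, which in the Kummer setting pull back via the Kummer morphism from $\cA_2$ into the relevant moduli of K3 surfaces to infinitely many distinct Humbert divisors on $\cA_2$. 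Simple-CM abelian surfaces correspond to a countable collection of $0$-dimensional CM strata, which the non-isotrivial one-dimensional curve $\mathscr{S}$ meets only at isolated points, whereas the product-type Humbert divisors in $\cA_2$ are $2$-dimensional. A dimension-counting / pigeonhole argument then forces all but finitely many elements of $\Sigma$ to lie on a product-type Humbert divisor, yielding the required infinite set of non-simple specializations.
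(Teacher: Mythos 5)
Your Kummer reduction (steps 1--3) is a reasonable reformulation: the paper's intended route is to apply \Cref{th:main-ff-general} directly to the Siegel threefold $\cA_2$, which \emph{is} a GSpin Shimura variety for a maximal lattice of signature $(3,2)$ via the exceptional isomorphism $\mathrm{GSp}_4\cong\mathrm{GSpin}(3,2)$, but passing through $\mathrm{Km}(A)$ lands you on an isogenous Shimura variety, and the discriminant/ordinariness checks you sketch are fine for $p\geq 5$. The problems begin at step 4 and become fatal at step 5.

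Step 4 is vacuous. Every abelian surface $A_s$ over $\overline{\F}_p$ has CM by Tate's theorem, so $\rho(A_s)\geq 2$, hence $\rho(\mathrm{Km}(A_s))\geq 18>17=\rho(\mathrm{Km}(A_{\overline\eta}))$ at \emph{every} closed point $s\in\mathscr S(\overline{\F}_p)$. The naive Picard-rank-jump formulation of \Cref{th:K3ff} therefore carries no information here. What is actually nontrivial is the precise form \Cref{th:main-ff-general}: infinitely many $s$ lie on some $\cZ(m)_{\F_p}$ with $m$ \emph{coprime to $p$}, which is where the coprimality hypothesis earns its keep (a simple ordinary $A_s$ whose real quadratic field is ramified at $p$ lies on no such $\cZ(m)$).

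Step 5 is wrong, and the gap is a genuine misconception rather than a missing detail. Your dimension-counting cannot distinguish simple-CM from non-simple, because over $\overline{\F}_p$ \emph{every} closed point of the ordinary locus of $\cA_2$ is a CM point. The ``$0$-dimensional CM strata'' you propose to avoid by pigeonhole in fact contain the whole of $\mathscr S(\overline{\F}_p)$; there is no dichotomy between ``isolated CM points'' and ``$2$-dimensional Humbert divisors'' to exploit, since every point lies on both. What actually separates the two cases is a discriminant condition, not a dimension condition: for a simple ordinary $A_s$ over $\overline{\F}_p$ with quartic CM field $E$ and real quadratic subfield $E_0$, the rank-$1$ lattice $\lambda^\perp\cap\mathrm{Pic}(A_s)$ is generated by a class of norm $D'$ with $\Q(\sqrt{D'})=E_0$, so $D'$ is a nonsquare, and $A_s$ lies on $\cZ(m)$ only for $m$ of the form $D'n^2$ --- never on any $\cZ(m)$ with $m$ a perfect square. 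Conversely, every ordinary $\overline{\F}_p$-point of $\cZ(m^2)$ is geometrically non-simple. So the correct deduction must produce intersection points on $\cZ(m^2)$ (equivalently $\cZ(Dm^2)$ with $D$ a square represented by the lattice and coprime to $p$), mirroring the arithmetic version \Cref{t:main_sp_end}. This means rerunning the estimate in the proof of \Cref{th:main-ff-general} over the thinner set of square $m$ (the choice of $S$ in Section 4.2), rather than extracting anything further from the statement of \Cref{th:K3ff}. Your proposal does not do this, and the ``refined form \ldots emerging from its proof'' it appeals to is not the refinement that is actually needed.
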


Both \Cref{th:K3nf} and \Cref{th:K3ff} are motivated by the density of Hodge loci in polarized variations of Hodge structure of weight $2$ of K3 type, see for example \cite{voisin,oguiso,tayouequi}. Recent density results for general polarized variations of Hodge structures of level less than $2$ as in \cite{tayoutholozan,baldi-klingler-ullmo} suggest that density of Hodge loci in arithmetic and function field settings are natural problems to investigate, and we hope to address these questions in future work. 
\subsection{Hecke orbit conjecture}
As an application of \Cref{th:K3ff}, we give a new proof of the Hecke orbit conjecture for orthogonal and certain unitary Shimura varieties. We refer to \cite[Section 1.2]{maulik-shankar-tang-K3} for the context and prior results on this conjecture.
\begin{theorem}\label{t:hecke-orbit}
Let $\mathcal{M}_{\mathbb{F}_p}$ be the reduction at $p\geq 5$ of the integral model of a Shimura variety of either: 
\begin{enumerate}
    \item Orthogonal type associated to a lattice of signature $(b,2)$ having discriminant prime to p.
    \item Unitary type associated to an imaginary quadratic field $K$ split at $p$ and to a Hermitian lattice over $\mathcal{O}_K$ of signature $(n,1)$ with discriminant prime to $p$.
\end{enumerate} 
Then the prime-to-p Hecke orbit of an ordinary point is Zariski dense in $\mathcal{M}_{\mathbb{F}_p}$.
\end{theorem}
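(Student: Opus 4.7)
The plan is to adapt the Maulik--Shankar--Tang strategy from \cite{maulik-shankar-tang-K3} to the quasi-projective setting; the key point is that \Cref{th:main-ff-general} (the GSpin refinement of \Cref{th:K3ff}) no longer requires properness of the base curve, which is precisely what is needed in the non-compact Shimura setting. Fix an ordinary point $x \in \mathcal{M}_{\mathbb{F}_p}$, let $H$ denote its prime-to-$p$ Hecke orbit, and set $Z := \overline{H}^{\mathrm{Zar}}$. The goal is to show $Z = \mathcal{M}_{\mathbb{F}_p}$, and I would proceed by induction on $\dim \mathcal{M}_{\mathbb{F}_p}$, the case of dimension $0$ being trivial.

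Since $H$ is stable under prime-to-$p$ Hecke correspondences, so is its Zariski closure $Z$. A linearity theorem of Chai (formulated via the ordinary Serre--Tate canonical torus and its formal subtori) then implies that each irreducible component of $Z$ passing through an ordinary point is itself a Shimura subvariety of $\mathcal{M}_{\mathbb{F}_p}$ of the same type, possibly of smaller signature. Assume for contradiction that $Z \neq \mathcal{M}_{\mathbb{F}_p}$, and pick an irreducible component $Z_0$ of $Z$ of maximal dimension, so that $\dim Z_0 \leq \dim \mathcal{M}_{\mathbb{F}_p} - 1$ and $Z_0$ is a proper Shimura subvariety of $\mathcal{M}_{\mathbb{F}_p}$.

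I would then choose a smooth irreducible curve $C \subset Z_0$ containing an ordinary point and not contained in any proper Shimura subvariety of $Z_0$. The restriction of the universal Kuga--Satake family to $C$ is generically ordinary (by openness of the ordinary locus) and non-isotrivial (by the choice of $C$), and its generic Picard lattice has discriminant prime to $p$ inherited from that of $\mathcal{M}_{\mathbb{F}_p}$. By \Cref{th:main-ff-general} there are infinitely many $s \in C$ where the Picard rank jumps strictly above its generic value on $Z_0$. Passing to an infinite subset we may assume each such $s$ is ordinary. Each $s$ lies on a Noether--Lefschetz divisor $D_s \subset \mathcal{M}_{\mathbb{F}_p}$ that is a proper Shimura subvariety of codimension $1$ and does not contain $Z_0$. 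By the inductive hypothesis applied to $D_s$, the prime-to-$p$ Hecke orbit of the ordinary point $s$ is Zariski dense in $D_s$; since $Z$ is Hecke-stable, this forces $D_s \subseteq Z$. But each $D_s$ is irreducible of dimension $\dim \mathcal{M}_{\mathbb{F}_p} - 1 \geq \dim Z_0$, so each $D_s$ must coincide with an irreducible component of $Z$ of maximal dimension. Infinitely many distinct $D_s$ therefore produce infinitely many distinct components of $Z$, contradicting the fact that $Z$ is a Noetherian scheme.

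The main obstacle is the linearity step, namely rigorously applying (or extending) Chai's theorem to identify the Hecke-stable components of $Z$ at ordinary points as Shimura subvarieties in both the orthogonal and unitary cases treated here. The existence of a suitable curve $C \subset Z_0$ avoiding all proper Shimura subvarieties, the verification that the hypotheses of \Cref{th:main-ff-general} (notably the discriminant-prime-to-$p$ assumption on the lattice attached to $Z_0$) are preserved along the induction, and the identification of the jump loci with Hecke-stable special divisors, are essentially technical points following the MST blueprint once properness is removed.
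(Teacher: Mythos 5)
Your proposal diverges from the paper's argument in two important ways, and both divergences create genuine gaps.

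\textbf{The linearity step is not available as a black box.} You invoke ``a linearity theorem of Chai'' to conclude that the irreducible components of $Z=\overline{H}^{\mathrm{Zar}}$ through ordinary points are Shimura subvarieties. But Chai's rigidity/linearity results are established for the Siegel and Hilbert moduli spaces; for orthogonal and unitary Shimura varieties they are not available off the shelf --- obtaining such a statement is precisely the hard part of the Chai--Oort and van Hoften approaches, and the entire point of the Maulik--Shankar--Tang strategy (which the paper follows) is to bypass it. The paper's proof never identifies the components of $\overline{T_x}$ as Shimura subvarieties; it works directly with the set $\overline{T_x}$.

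\textbf{The choice of curve inside a single component $Z_0$ breaks the dimension count.} If $Z_0\subsetneq\mathcal{M}_{\F_p}$ is a proper Shimura subvariety of a GSpin Shimura variety passing through an ordinary point, it is (a component of) some special divisor $\cZ(m_0)$, hence so is your curve $C\subset Z_0$. But then $C$ is contained in a special divisor of $\mathcal{M}_{\F_p}$, which is exactly the hypothesis excluded in \Cref{th:main-ff-general}. So the jump theorem cannot be applied to $C$ relative to $\mathcal{M}_{\F_p}$, and your claim that the resulting loci $D_s$ are ``codimension $1$ and do not contain $Z_0$'' is inconsistent: every point of $C$ already lies on $\cZ(m_0)\supset Z_0$. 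If instead you apply the jump theorem to $C$ relative to $Z_0$, the resulting divisors have codimension $2$ in $\mathcal{M}_{\F_p}$, so the ``infinitely many maximal-dimensional components'' contradiction no longer goes through.

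The paper avoids both problems by choosing the auxiliary curve $\scrS$ directly inside $\overline{T_x}$ (in the ordinary locus, not contained in \emph{any} special divisor --- justified by transporting \cite[Lemma 8.11]{maulik-shankar-tang-K3} to the quasi-projective setting), then invoking \Cref{th:K3ff} to produce infinitely many ordinary points $y\in\scrS\cap\cZ'(m)$. Since $\overline{T_x}$ is Hecke-stable, the inductive hypothesis applied to $\cZ'(m)$ forces $\cZ'(m)\subset\overline{T_x}$, and infinitely many distinct such divisors of codimension $1$ force $\overline{T_x}=\mathcal{M}_{\F_p}$. No linearity theorem and no Noetherian contradiction via component counting is needed. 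You should reorganize the argument along these lines; as written, it cannot be made rigorous without importing a major external ingredient.
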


The density of Hecke orbits in characteristic zero is a consequence of the work of Clozel--Oh--Ullmo \cite{clozelullmo}, see also \cite{eskinoh} for a dynamical approach using Ratner theory. Chai first proved the Hecke orbit conjecture for the ordinary locus of the moduli space of principally polarized abelian varieties in \cite{chai-hecke-orbit}. For orthogonal and some unitary Shimura varieties, a first proof of the Hecke orbit conjecture in the ordinary case has been obtained by Maulik--Shankar--Tang in \cite{maulik-shankar-tang-K3} and our approach is inspired from theirs. Very recently, Pol Van Hoften \cite{polvanhoften} proved this conjecture for the ordinary locus of Shimura varieties of abelian type under certain conditions on the reflex field and using completely different methods. 
\subsection{Strategy of the proof}
\Cref{th:K3nf} and \Cref{th:K3ff} are proved using a strategy initiated by Chai-Oort \cite{chai_oort} and Charles \cite{charles-exceptional-isogenies} for the product of two modular curves and subsequently used in \cite{maulik-shankar-tang,shankar-tang} for Hilbert modular surfaces over number fields and Siegel threefolds over $\overline{\F}_p$. Here we follow the set-up in \cite{sstt} and \cite{maulik-shankar-tang-K3} to which we refer for more details. For \Cref{th:K3nf}, we first translate it into  an intersection theory type statement between a curve and a sequence of divisors in the integral model of a toroidal compactification of a Shimura variety of GSpin type. For this matter, we use the Arakelov intersection theory with prelog forms developed in \cite{burgos}. We follow a similar approach for \Cref{th:K3ff}, using the usual intersection theory on the reduction modulo $p$ of the aforementioned compactification of the integral model of a GSpin Shimura variety. The new ingredients which were missing in both \cite{sstt} and \cite{maulik-shankar-tang-K3} are the local estimates on multiplicities of intersection with special divisors at points of bad reduction and the estimates of extra terms coming from the boundary divisors in the global intersection numbers coming from the work of \cite{bruinierzemel}, see also \cite{engel-greer-tayou} for a recent approach. These are the main contributions of this paper. To obtain the first estimates, we use an explicit description of the special divisors in the formal completions along toroidal boundary components. This allows us to define in each case a decreasing sequence of positive definite lattices $(L_n,Q)$ which computes the local intersection number. We give an estimate on the growth of the successive minima of these lattices, then a geometry-of-numbers type argument allows us to derive the desired estimates. To obtain the bounds on the extra terms in the global intersection number, we use the explicit expressions from \cite{bruinierzemel} and \cite{bruinier} combined with an equidistribution result from \cite{duke-invetiones-hyperbolic,eskinoh}. 
\subsection{Organization of the paper}
The key input of this paper is the description of the special divisors in terms of local coordinates of integral models of toroidal compactifications of Shimura varieties of GSpin type. In Section 2, we explain  these constructions following \cite{howardmadapusi} and \cite{madapusiintegral}, and the section culminates with a description of the special divisors in formal completions along locally closed boundary divisors. In Section 3, we recall briefly Arakelov arithmetic intersection theory with prelog forms following \cite{burgos}, and we assemble different ingredients from the literature (\cite{bruinierzemel,howardmadapusi,borcherdszagier}) to state the modularity of the generating series of special divisors in the integral models of toroidal compactifications of Shimura varieties of GSpin type. In Section 4, we state the archimidean and finite place estimates needed to prove our main theorems,  and then we prove the archimidean estimates. Section 5 is devoted to estimating contributions from bad reduction places. Finally, we prove the application to Hecke orbit conjecture in Section 6. 

\subsection{Acknowledgments} I am very grateful to Fran\c{c}ois Charles, Fran\c{c}ois Greer, Keerthi Madapusi Pera, Ananth Shankar and Yunqing Tang for many helpful discussions and insights. This project started while I was invited by Yunqing Tang to Princeton University, and I also benefited from support from the Institute for Advanced Study in Princeton. I thank both institutions for their hospitality. I thank the referee for the very valuable comments and suggestions.

\section{GSpin Shimura varieties: integral models and compactifications}
This section summarizes the construction of the GSpin Shimura variety, its toroidal compactifications and their integral models following \cite{bruinierzemel,howardmadapusi,madapusitor,agmp-annals}, see also \cite{kisin,madapusiintegral,pink} for earlier work. The ultimate goal is to describe the special divisors in formal completions along the toroidal boundary strata. The familiar reader may wish to skip directly to \Cref{s:special-div-main} for these results. 
\subsection{The GSpin Shimura variety}\label{ss:gspinshimura}
Let $(L,Q)$ be an even quadratic lattice of signature $(b,2)$ with $b\geq 1$ and with associated even bilinear form \[(\,.\,):L\times L\rightarrow \Z\] such that $Q(x)=\frac{(x.x)}{2}\in\Z$ for all $x\in L$. 

Let $G=\mathrm{GSpin}(L_\Q)$ be the algebraic group over $\Q$ of spinor similitudes defined as in \cite[Section 1.2]{madapusiintegral}. The group $G(\R)$ acts on the Hermitian symmetric space
\[\mathcal{D}=:\{z\in \mathbb{P}(L_\C)|\,(z.z)=0,\,(z.\overline{z})<0\}.\]
The pair $(G,\mathcal{D})$ is the {\it GSpin Shimura datum}. Its reflex field is $\Q$ by \cite[Section 3.1]{madapusiintegral}.
\medskip

For $K\subset G(\mathbb{A}_f)$ a compact open subgroup, the {\it GSpin Shimura variety} 
\[M(\C)=G(\Q)\backslash \D\times G(\mathbb{A}_f)/K\]
is the set of complex points of a Deligne--Mumford stack $M$ defined over $\Q$. In what follows, we  choose the compact open group $K\subset G(\mathbb{A}_f)$ as in \cite[Equation (4.1.2)]{agmp-annals}. Its image in $\mathrm{SO}(L_\Q)(\mathbb{A}_f)$ stabilizes $L\otimes \widehat{\Z}\subseteq L\otimes \mathbb{A}_f$ and is equal to the subgroup that acts trivially on the quotient $\widehat{L}^{\vee}/\widehat{L}=L^{\vee}/L$, where the dual lattice $L^{\vee}$ is defined as 
\[L^{\vee}=\{x\in L_\Q| \forall y\in L,\, (x.y)\in\Z\}.\]

The Shimura variety $M$ carries a  line bundle of weight $1$ modular forms that we denote by $\mathcal{L}_\Q$ and we refer to \cite[Section 4.1]{agmp-annals} for a definition. The Shimura datum $(G,\mathcal{D})$ is of Hodge type by \cite[Section 2.2]{agmp-compositio}: there exists a Shimura datum of Siegel type $(G^{Sg},\D^{Sg})$ and a compact open subgroup $K^{sg}\subset G^{sg}(\mathbb{A}_f)$ such that we have an embedding of Shimura varieties over $\Q$
\[M\hookrightarrow M^{Sg}.\]
This is the {\it Kuga--Satake embedding}. The pull-back of the universal abelian scheme on $M^{Sg}$ yields the {\it Kuga--Satake} abelian scheme $A\rightarrow M$.

\subsection{Toroidal compactifications over $\C$}In this section, we describe the toroidal compactifcations of $M$ as well as the structure of the boundary components following \cite{bruinierzemel} and \cite{howardmadapusi}. See also \cite{amrt} for the general theory of toroidal compactifications over $\C$.
\medskip 

Recall from \cite[Section 2.2]{howardmadapusi} that an admissible parabolic subgroup $P\subseteq G$ is either a maximal proper parabolic subgroup of $G$ or $G$ itself. \footnote{$G^{ad}$ is simple in our case.}
A {\it cusp label representative} $\Phi=(P,\D^{\circ},h)$ is a triple constituted from an admissible parabolic subgroup $P$, a connected component $\D^{\circ}\subset \D$ and an element $h\in G(\mathbb{A}_f)$. 

Attached to a cusp label representative $\Phi=(P,\D^{\circ},h)$, there exists a mixed Shimura variety that we now describe. Let $U_{\Phi}$ be the unipotent radical of $P$ and let $W_{\Phi}$ be the center of $U_{\Phi}$ \footnote{We follow the notations of \cite{madapusitor} which differ from other references.}. Let $Q_\Phi$ be the normal subgroup of $P$ defined as in \cite[\S 4.7]{pink}, see also \cite[2.2]{howardmadapusi}. Define as in {\it loc. cit.} $\D_\Phi=Q_\Phi(\R)W_\Phi(\C)\D^{\circ}$ and let $K_\Phi=hKh^{-1}\cap Q_{\Phi}(\mathbb{A}_f)$. 
We define then the mixed Shimura variety 
\begin{align}\label{def-mixed}
    M_{\Phi}(\C)=Q_\Phi(\Q)\backslash \D_\Phi\times Q_\Phi(\mathbb{A}_f)/K_\Phi.
\end{align}
By \cite[Prop. 12.1]{pink}, $M^{\Phi}(\C)$ has a canonical model $M^{\Phi}$ also defined over $\Q$. Let $\overline{Q}_\Phi=Q_\Phi/W_\Phi$ and $\overline{\D}_\Phi=W_\Phi(\C)\backslash \D_\Phi$. Let $\overline{K}_\Phi$ be the image of $K_\Phi$ under the quotient map  $Q_\Phi(\mathbb{A}_f)\rightarrow \overline{Q}_\Phi(\mathbb{A}_f)$. Then from the data $(\overline{Q}_\Phi,\overline{\D}_\Phi,\overline{K}_\Phi)$ we define similarly to \ref{def-mixed} a mixed Shimura variety $\overline{M}_\phi$ and we have a canonical morphism  
\begin{align}\label{eq:torus-fibration}
    M_{\Phi}\rightarrow \overline{M}_\Phi.
\end{align}
This map has a torsor structure that we now describe. Let $\Gamma_\Phi=K_\Phi\cap W_\Phi(\Q)$. It is a $\Z$-lattice in $W_\Phi(\Q)$. By \cite[Proposition 2.3.1]{howardmadapusi}, the map  \ref{eq:torus-fibration}  is canonically a torsor under the torus $T_{\Phi,\Q}$ whose cocharacter group is $\Gamma_\Phi$.

The mixed Shimura variety $\overline{M}_{\Phi}$ has itself a fibration structure over a pure Shimura variety constructed as follows, see \cite[2.1.7]{madapusitor} for more details. 
\medskip

Let $G^{h}_{\Phi}=Q_\Phi/U_\Phi$ be the Levi quotient of $Q_\Phi$, $V_\Phi=U_\Phi/W_\Phi$ the unipotent radical of $\overline{Q}_\Phi$ and let $\D^{h}_{\Phi}=V_\Phi(\R)\backslash \overline{D}_\Phi$. Then the pair $(G^{h}_{\Phi},\D^{h}_{\Phi})$ is a pure Shimura datum with reflex field equal to $\Q$. Let $K_{\Phi}^{h}\subset G_\Phi^{h}(\mathbb{A}_f)$ be the image of $K_\Phi$. Then the quotient
\[M_{\Phi}^{h}(\C)=G^{h}_{\Phi}(\Q)\backslash (\D^{h}_{\Phi}\times G^{h}_{\Phi}(\mathbb{A}_f))/K_{\Phi}^{h}\]
is the set of complex points of a Shimura variety which admits a canonical model $M_{\Phi}^{h}$ defined over $\Q$ and we have a canonical map 
\begin{align}\label{map-torsor-structure}
    \overline{M}_{\Phi}\rightarrow M_{\Phi}^{h}.
\end{align}
By \cite[2.1.12]{madapusitor}, there exists a natural abelian scheme $A_K(\Phi)\rightarrow M_{\Phi}^{h}$ such that the map \ref{map-torsor-structure} is a torsor under $A_K(\Phi)$.
\medskip 

In what follows, we will describe the above data for the GSpin Shimura variety introduced in \Cref{ss:gspinshimura} following \cite[Section 4]{howardmadapusi} and \cite[Section 3]{bruinierzemel}. Let $\Phi$ be a cusp label representative. The admissible parabolic subgroup $P$ is the stabilizer of a totally isotropic subspace $I_\Phi$ of $L_\Q$ of dimension at most $2$. The dimension $0$ case corresponds to $P=G$. If $P$ is the stabilizer of a primitive isotropic line $I_\Q\subset L_\Q$, then the cusp label representative is said to be of type III. If $P$ is the stabilizer of a primitive isotropic plane $J_\Q\subset L_\Q$, then $\Phi$ is said to be of type III. We will follow the notations of \cite{bruinierzemel} and denote by $\Upsilon$, resp. $\Xi$, a cusp label representative of type II, resp. of type III. 
\medskip 

Given two cusp label representatives $\Phi_1$ and $\Phi_2$, there is a notion of a $K$-morphism $\Phi_1\xrightarrow{(\gamma,q_2)_K} \Phi_2$ given by $\gamma\in G(\Q)$ and $q_2\in Q_{\Phi_2}(\mathbb{A}_f)$ which we don't define here and refer to \cite[2.1.14]{madapusitor} for the definition, see also \cite[Definition 2.4.1]{howardmadapusi}. 
\medskip

Let $\Phi$ be a cusp label representative. By the general theory of toroidal compactifications, see \cite[4.15]{pink} or \cite[Ch. II.\S 1.1]{amrt} for the definitions, there exists a canonical open non-degenerate self-adjoint convex cone $C_\Phi\subset W_\Phi(\R)$ homogeneous under $P(\R)$ and which allows to realize $\mathcal{D}^{\circ}$ as a tube domain inside an affine space, see \cite[2.1.5]{madapusitor}. We define the {\it extended cone} $C^{*}_\Phi$ as in \cite[2.1.22]{madapusitor}: for any map $\Phi'\xrightarrow{(\gamma,q)_K} \Phi$, the conjugation by $\gamma^{-1}$ induces an embedding \[\mathrm{int}(\gamma^{-1}):W_{\Phi'}(\R)\hookrightarrow W_{\Phi}(\R)\] and we define then 
\[ C_{\Phi}^{*}=\bigcup_{\Phi'\rightarrow \Phi}\mathrm{int}(C_{\Phi'}).\]
This cone lies between $C_\Phi$ and its topological closure in $W_\Phi(\R)$ but in general, it is neither open nor closed. See also \cite[Definition-Proposition 4.22]{pink} for more details.
\medskip

Recall from \cite[Definition 2.4.3]{howardmadapusi} that a {\it rational polyhedral cone decomposition} (rpcd for short) of $C^{*}_{\Phi}$ is a collection $\Sigma_{\Phi}=\{\sigma\}$ of rational polyhedral cones $\sigma\subset W_\Phi(\R)$ satisfying natural compatibility conditions (we don't recall these conditions here and invite the reader to consult the reference above for more information). The rpcd $\Sigma_\Phi$ is said to be smooth if it is smooth in the sense of \cite[\S 5.2]{pink} with respect to the lattice $\Gamma_\Phi$. It is complete if \[C^*_{\Phi}=\bigcup_{\sigma\in \Sigma_\Phi} \sigma.\]

\subsubsection{Boundary components of type II}\label{s:typeII}
Let $\Upsilon$ be a cusp label representative of type II. Then $P$ is the stabilizer of a primitive isotropic plane $J_\Q \subseteq L_\Q$ and let $J=J_\Q\cap h.L$, where $h\in G(\mathbb{A}_f)$ acts on $L$ via the map $G(\mathbb{A}_f)\rightarrow \mathrm{SO}(L_\Q)(\mathbb{A}_f)$. 
Then by \cite[Page 31]{howardmadapusi}, the group $W_\Upsilon$ is identified with $\bigwedge^{2}J_\Q$, hence it is one dimensional. The lattice $\Gamma_\Upsilon\subset W_\Upsilon$ is also of rank $1$. The open convex cone $C_{\Upsilon}$ is given by a half line $\R^{+}\backslash\{0\}$ and the extended cone is $C^{*}_{\Upsilon}=\{0\}\cup C_\Upsilon$. 
\medskip 

Let $M_{\Upsilon}$ and $\overline{M}_{\Upsilon}$ be the  mixed Shimura varieties associated to $\Upsilon$. Then $M_{\Upsilon}\rightarrow \overline M_{\Upsilon}$ is a torsor under the $1$ dimensional torus $T_\Upsilon$ with cocharacter group $\Gamma_\Upsilon$. The group $G^h_{\Upsilon}$ is equal to $\mathrm{SL}_2$ and $\D^h_{\Upsilon}$ is equal to the Poincar\'e upper half-plane. The Shimura variety $M^{h}_\Upsilon$ is a modular curve and the abelian scheme $A_\Upsilon$ is equal to the Kuga--Sato variety $D\otimes E$ where $E\rightarrow M^{h}_{\Upsilon}$ is the universal elliptic curve over over $M^{h}_{\Upsilon}$ and $D$ is the positive definite plane $J^{\bot}/J$,  see \cite[Corollary 3.17]{bruinierzemel}  and \cite[Proposition 4.3]{zemel} for details and proofs. Notice that our choice for the compact open subgroup $K$ gives exactly the stable orthogonal group used in \cite{bruinierzemel} and \cite{zemel}. 
\medskip 

The only possible cone decomposition of $C^{*}_\Upsilon$ in this situation is $\Sigma_\Upsilon=\{\{0\},C_{\Upsilon}\cup \{0\}\}$ and this determines a partial compactification $M_{\Upsilon}\hookrightarrow M_{\Upsilon,\Sigma}$ which is a fibration by $\A^1_{\C}$ over $\overline M_{\Upsilon}$. Finally, there is only one boundary divisor denoted by $B_\Upsilon$ associated to the ray $C_{\Upsilon}$. 
\subsubsection{Boundary components of type III}\label{s:typeIII}
Let $\Xi$ be a cusp label representative of type III. Then $P$ is the stabilizer of a primitive isotropic line $I_\Q \subset L_\Q$ and let $I=I_\Q\cap h.L$. Set $K_I=I^{\bot}/I$. Then by \cite[Equation (4.4.2)]{howardmadapusi}, we have $U_\Xi=W_\Xi$ and we have an isomorphism of vector spaces \[K_{I,\Q}\otimes I_\Q\simeq W_{\Xi}(\Q).\]
The lattice $(K_I,Q)$ is a Lorentzian lattice of signature $(b-1,1)$. Under the above isomorphism, and assuming we have chosen a primitive generator of $I$, the open convex cone $C_\Xi\subset W_{\Xi}(\R)$, see \cite[Section 2.4]{howardmadapusi} is identified with a connected component of the light cone 
\[\{x\in K_{I,\R},\, Q(x)< 0\}.\]
The spaces $M^h_{\Xi}$ and $\overline{M}_\Xi$ are equal and are Shimura varieties of dimension zero that we can describe as follows. Let  $(\mathbb{G}_m,\mathcal{H}_0)$ be the Shimura data given by 
\[\mathcal{H}_0:=\{2\pi\epsilon: \epsilon^2=-1\},\]
on which $\R^{\times}$ acts naturally through the quotient $\R^{\times}/\R^{\times}_+$. There is a morphism of mixed Shimura data $(Q_\Xi,\D_\Xi)\rightarrow (\mathbb{G}_m,\mathcal{H}_0)$ given by a canonical character $v_\Xi:Q_\Xi\rightarrow \mathbb{G}_m$ defined as in \cite[Equation (4.4.1)]{howardmadapusi} and a map $\D_\Xi\rightarrow \mathcal{H}_0$ given as in \cite[Equation (4.6.3)]{howardmadapusi}. Then the Shimura variety $\mathrm{Sh}_{\nu_\Xi(K_\Xi)}(\mathbb{G}_m,\mathcal{H}_0)$ is zero dimensional and the canonical map $M_\Xi\rightarrow \mathrm{Sh}_{\nu_\Xi(K_\Xi)}(\mathbb{G}_m,\mathcal{H}_0)$ is a torsor under the torus $T_\Xi=\mathrm{Spec}\left(\Q[q_\alpha]_{\alpha\in\Gamma_\Xi^\vee}\right)$ with cocharacter group $\Gamma_\Xi=K_I$ by \cite[Proposition 3.7]{bruinierzemel}.
\medskip 

The intermediate cone  $C_{\Phi}^{*}$ can be described explicitly as follows, see also \cite[page 23]{bruinierzemel}: for any type II boundary component $\Upsilon$ with corresponding isotropic plane $J$ containing $I$, the quotient $J/I$ has a generator $\omega_{\Xi,\Upsilon}$ lying on the boundary of the $C_\Xi$. Hence \[C_{\Xi}^{*}=C_{\Xi}\cup\bigcup_{\Upsilon}\R\omega_{\Xi,\Upsilon}.\]
The rays $\R\omega_{\Xi,\Upsilon}$ will be referred to as the external rays and the rays in $C_{\Xi}$ are the {\it inner rays}.

\subsubsection{Toroidal compactifications}
Recall from \cite[Definition 2.4.4]{howardmadapusi} that a $K$-admissible rational polyhedral cone decomposition for $(G,\D)$ is a collection   $\Sigma=\{\Sigma_\Xi,\Sigma_{\Upsilon}\}$ such that  $\Sigma_\Xi$ and $\Sigma_\Upsilon$ are rpcd for any cusp label representative $\Xi$ and $\Upsilon$ respectively satisfying the compatibility conditions of \cite[Definitions 2.4.3, 2.4.4]{howardmadapusi}. It is said smooth (resp. complete) if every $\Sigma_\Phi$ is smooth (resp. complete).  

A toroidal stratum representative is a pair $(\Phi,\sigma)$ where $\Phi$ is a cusp label representative and $\sigma\subset C_{\Phi}^{*}$ is a rational polyhedral cone whose interior is contained in $C_\Phi$. 
There is similarly a notion of $K$-morphism between stratum representatives, see \cite[Definition 2.4.6]{howardmadapusi} and the set of $K$-isomorphism classes of toroidal stratum representatives will be denoted $\mathrm{Start}_K(G,\D,\Sigma)$.
We say that $\Sigma$ is finite if \[|\mathrm{Start}_K(G,\D,\Sigma)|<\infty.\]

Let $\Sigma$ be a finite $K$-admissible complete cone decomposition. The main result of \cite[\S 12]{pink}, see also \cite[Theorem 2.1.27]{madapusitor}, ensures that there exists a proper toroidal compactification \[M\hookrightarrow M^{\Sigma}\] in the category of Deligne-Mumford stacks over $\Q$ such that $M^{\Sigma}$ is proper over $\Q$ and has a stratification 
\begin{align}\label{eq:stratification-generic}
    M^{\Sigma}=\bigsqcup_{(\Phi,\sigma)\in\mathrm{Start}_K(G,\D,\Sigma)} B^{\Phi,\sigma}
\end{align}
by locally closed subspaces indexed by the finite set of strata $\mathrm{Start}_K(G,\D,\Sigma)$. The stratum indexed by $(\Phi,\sigma)$ lies in the closure of the stratum index by $(\Phi',\sigma')$ if and only if there is a $K$-morphism of strata representatives $(\Phi,\sigma)\rightarrow (\Phi',\sigma')$. Then the closure of the stratum $B^{\Phi,\sigma}_K$ is given by 
\[\overline{B^{\Phi,\sigma}}=\bigcup_{(\Phi',\sigma')\rightarrow (\Phi,\sigma)}B^{\Phi',\sigma'}.\]

Moreover, by \cite[Theorem 3.4.1]{howardmadapusi} following the work of Harris and Harris--Zucker \cite{harriszucker}, the line bundle of weight $1$ modular forms $\mathcal{L}$ extends to a line bundle on $M^{\Sigma}$ which we still denote $\mathcal{L}$ by abuse of notation.
\medskip 

Let $(\Phi,\sigma)$ be a toroidal stratum representative. Then $(\Phi,\sigma)$ determines a partial compactification of the mixed Shimura variety $M_\Phi\hookrightarrow M_\Phi(\sigma)$ with boundary component index by $\sigma$ denoted by $Z^{\Phi}(\sigma)$. Pink proved that there is a canonical isomorphism \cite[Corollary 7.17, Theorem 12.4]{pink}, see also \cite[Theorem 2.1.27]{madapusitor}, of Deligne--Mumford stacks: 
\[\Delta_K(\Phi,\sigma)\backslash Z^{\sigma}(\sigma)\simeq B^{\Phi,\sigma}\] 
where $\Delta_K(\Phi,\sigma)$ is the finite group defined in \cite[2.1.19]{madapusitor}. The latter induces an isomorphism of formal Deligne--Mumford stacks:
\begin{align}\label{eq:iso-completions}
\Delta_K(\Phi,\sigma)\backslash\widehat{M}_\Phi(\sigma)\simeq \widehat{M}^{\Sigma}
\end{align}
where $\widehat{M}_\Phi(\sigma)$ is the completion of $M_\Phi(\sigma)$ along the locally closed subspace $Z^{\Phi}(\sigma)$ and $\widehat{M}^{\Sigma}$ is the formal completion of $M^{\Sigma}$ along the locally closed stratum $B^{\Phi,\sigma}$. 

Our goal in the next two sections is to make the above isomorphisms explicit for type II and type III boundary strata.

\subsubsection{Formal completion along type II boundary strata}\label{ss:formal-tII} Let $\Upsilon$ be a cusp label representative of type II. By the discussion in \Cref{s:typeII}, there is a unique choice of a 1 dimensional ray $\sigma$ and hence  a unique choice of boundary stratum representative $(\Upsilon,\sigma)$  which corresponds to a locally closed divisor $B^{\Upsilon,\sigma}$.

The morphism $M_\Upsilon\rightarrow \overline{M}_\Upsilon$ is then a torsor under a $1$-dimensional torus $T_\Upsilon$ with cocharacter group $\Gamma_\Upsilon\simeq \Z$, i.e., $T_\Upsilon\simeq\mathrm{Spec}(\Q[q,q^{-1}])$. The partial compactification $T_\Upsilon(\sigma)$ is then isomorphic to $\mathrm{Spec}\left(\Q[q]\right)$  and the partial toroidal compactification of $M_\Upsilon$ is given as a twisted torus embedding over $\overline{M}_\Upsilon$ with fiber $\mathrm{Spec}\left(\Q[q]\right)$. Hence we have the following description of $\widehat{M}_\Upsilon(\sigma)$ 

\[\widehat{M}_{\Upsilon}(\Sigma)\xrightarrow{\mathrm{Spf}(\Q[[X]])} \overline{M}_\Upsilon\xrightarrow{D\otimes E} M^{h}_\Upsilon.\]

\subsubsection{Formal completion along type III boundary strata}\label{ss:fomral-tIII}

Let $(\Xi,\sigma)$ be a toroidal stratum representative of type III such that $\sigma$ is a one dimensional inner ray. The corresponding boundary component is denoted by $B^{\Phi,\sigma}$ and is a locally closed divisor. Write $\sigma=\R\omega$ where $\omega\in C_\Xi\cap K$ is an integral primitive generator that satisfies $(\omega.\omega)<0$. 

The morphism $M_\Xi\rightarrow \mathrm{Sh}_{\nu_\Xi(K_\Xi)}(\mathbb{G}_m,\mathcal{H}_0)$ is a torsor under the torus \[T_\Xi=\mathrm{Spec}\left(\Q[q_\alpha]_{\alpha\in\Gamma_\Xi^\vee}\right).\]
The partial compactification $T_\Xi(\sigma)$ is equal to \[T_\Xi(\sigma)=\mathrm{Spec}\left(\Q[q_\alpha]_{\underset{(\alpha.\omega)\geq 0}{\alpha\in\Gamma_\Xi^\vee}}\right)\]
and the ideal defining the boundary divisor is given by $I_\sigma=(q_\alpha,(\alpha,\omega)>0)$. It is generated by $q_{\omega'}$ for any $\omega'\in \Gamma_\Xi^\vee$ for which $(\omega,\omega')=1$.  We fix such $\omega'$. 

The formal completion along the boundary divisor is then given by: 
\[\widehat{T}_\Xi(\sigma)=\mathrm{Spec}\left(\Q[q_\alpha,\,\alpha\in\Gamma_\Xi^\vee\cap \omega^{\bot}][[q_{\omega'}]]\right),\]
and the map $M_\Xi(\sigma)\rightarrow \mathrm{Sh}_{\nu_\Xi(K_\Xi)}(\mathbb{G}_m,\mathcal{H}_0)$
is a twisted torus embedding with fibers $\widehat{T}_\Xi(\sigma)$. We will trivialize this fibration following an approach similar to \cite[Page 34]{howardmadapusi}.

First choose an auxiliary isotropic line $I_*\subset L_\Q$ such that $(I.I_*)\neq 0$. Then by \cite[Equation (4.6.6)]{howardmadapusi} and the discussion that follows, this determines a section 
\[(\mathbb{G}_m,\mathcal H_0)\xrightarrow{s} (Q_\Xi,\mathcal D_\Xi).\]

The section $s$ determines a Levi decomposition $Q_\Xi=\mathbb G_m \ltimes U_\Xi$. 
Let $K_0\subset \mathbb{G}_m(\mathbb{A}_f)$ be a compact open subgroup small enough such that the image under the section $s$ is contained in $K_\Xi$ and let 
\[K_{\Xi,0}=K_0\ltimes (U_\Xi(\mathbb{A}_f\cap K_\Xi))\subset K_\Xi.\]

Then by reasoning similarly to \cite[Proposition 4.6.2]{howardmadapusi}, we have the following. 
\begin{proposition}\label{p:local-chart}
We have an isomorphism of formal algebraic spaces: 
\[\underset{{ a\in  \Q^\times_{>0} \backslash \mathbb{A}_f^\times / K_0  }}{\bigsqcup} \widehat{T}_\Xi(\sigma)_{/\C} \xrightarrow{\simeq} \widehat{M}_{K_{\Xi,0}}   (\sigma )_{/\C}, \]
and the map  \[\widehat{M}_{K_{\Xi,0}}(\sigma)_{/\C}\rightarrow \widehat{M}_{K_{\Xi}}(\sigma)_{/\C}\]
is a formally \'etale map of formal Deligne--Mumford stacks given by the quotient by $K_\Xi/K_{\Xi,0}$. 
In particular, if $K$ is neat, then the above map is a formally  \'etale surjection of algebraic spaces.
\end{proposition}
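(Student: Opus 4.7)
The approach is to adapt the argument of \cite[Proposition 4.6.2]{howardmadapusi}, which treats an analogous formal chart statement, to our type III situation. I would start from the analytic double-coset presentation of $M_{K_{\Xi,0}}(\C)$ furnished by (\ref{def-mixed}). The choice of auxiliary isotropic line $I_*$ yields the section $s$, hence a Levi decomposition $Q_\Xi=\mathbb{G}_m\ltimes U_\Xi$ of algebraic groups over $\Q$, and our assumption on $K_0$ makes $K_{\Xi,0}=K_0\ltimes(U_\Xi(\mathbb{A}_f)\cap K_\Xi)$ a subgroup of $K_\Xi$ compatible with this decomposition.

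The key step is to use strong approximation for the unipotent group $U_\Xi$ to decompose the adelic quotient: every $Q_\Xi(\mathbb{A}_f)$-orbit, modulo $Q_\Xi(\Q)$ and $K_{\Xi,0}$, is represented by a pair $(h,u)$ with $h\in\mathbb{A}_f^\times$ and $u$ in a fundamental domain for $U_\Xi(\Q)\backslash U_\Xi(\mathbb{A}_f)/(U_\Xi(\mathbb{A}_f)\cap K_\Xi)$. The unipotent variables together with the fibers of $\D_\Xi\to\mathcal{H}_0$ assemble, over each $h$, into a copy of the torus-torsor $M_\Xi\to\mathrm{Sh}_{\nu_\Xi(K_\Xi)}(\mathbb{G}_m,\mathcal{H}_0)$ from \Cref{s:typeIII}. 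Since $\R^\times$ acts on $\mathcal{H}_0$ via $\R^\times/\R^\times_{>0}$, the component set reduces to $\Q^\times_{>0}\backslash\mathbb{A}_f^\times/K_0$. On each component one obtains a copy of $T_\Xi$, and its partial compactification along the ray $\sigma$ is the toric variety $T_\Xi(\sigma)$ described in \Cref{ss:fomral-tIII}; taking the formal completion along the boundary divisor $q_{\omega'}=0$ produces $\widehat{T}_\Xi(\sigma)_{/\C}$, giving the asserted isomorphism.

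For the second statement, the transition $\widehat{M}_{K_{\Xi,0}}(\sigma)_{/\C}\to\widehat{M}_{K_\Xi}(\sigma)_{/\C}$ is induced by the inclusion of level structures and is therefore given by the action of the finite group $K_\Xi/K_{\Xi,0}$. This action preserves the toroidal stratum indexed by $\sigma$ because $K_\Xi$ stabilizes $C_\Xi$ and acts on $\Gamma_\Xi$ through a finite quotient, so the resulting map is formally étale. When $K$ is neat, a standard argument shows that the induced action on the formal neighborhood is free, so the quotient is representable in the category of formal algebraic spaces and the map is a formally étale surjection, as claimed.

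The main technical obstacle, just as in the type II analogue of \cite{howardmadapusi}, is to correctly match the two descriptions of the mixed Shimura variety $M_\Xi$: the double-coset presentation arising from $(Q_\Xi,\D_\Xi)$ on the one hand, and the $T_\Xi$-torsor structure over the zero-dimensional Shimura variety $\mathrm{Sh}_{\nu_\Xi(K_\Xi)}(\mathbb{G}_m,\mathcal{H}_0)$ on the other. The bridge is built from the canonical trivialization afforded by $I_*$ together with careful bookkeeping of the diagonal $Q_\Xi(\Q)$-action, and this is precisely what ensures the component set is genuinely $\Q^\times_{>0}\backslash\mathbb{A}_f^\times/K_0$ with no hidden identifications.
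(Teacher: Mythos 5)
Your proposal is correct and follows the same route as the paper: the paper's proof is the one-line observation that the argument of Howard--Madapusi Pera, Prop.\ 4.6.2, carries over verbatim to the type III setting, and what you have written out (Levi decomposition from $I_*$, strong approximation for $U_\Xi$ to collapse the unipotent coordinates, identification of the component set as $\Q^\times_{>0}\backslash\mathbb{A}_f^\times/K_0$, and the finite-group quotient giving the formally \'etale map, with neatness ensuring freeness) is precisely the content of that cited argument.
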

\begin{proof}
The same proof as in \cite[Proposition 4.6.2]{howardmadapusi} works with no change in our setting.
\end{proof}

\subsection{Integral models}\label{s:integral-models}
We recall in this section the construction of integral models of GSpin Shimura varieties and their compactifications following \cite{howardmadapusi,agmp-annals,madapusitor}. We  assume henceforth that the lattice $(L,Q)$ is a maximal lattice, i.e., there is no strict superlattice in $L_\Q$ containing $L$ over which $Q$ is $\Z$-valued.
\medskip

By \cite[Section 4.4]{agmp-annals}, there exists a flat and normal integral model $\mathcal{M}\rightarrow \mathrm{Spec}(\Z)$ which is a Deligne--Mumford stack of finite type over $\Z$. It enjoys the following properties:
\begin{enumerate}
    \item If the lattice $(L,Q)$ is almost self dual\footnote{See \cite[Definition 6.1.1]{howardmadapusi}.} at a prime $p$ then the restriction of the integral model to $\mathrm{Spec}(\Z_{(p)})$ is smooth.
    \item If $p$ is odd and $p^2$ does not divide the discriminant of $(L,Q)$, the restriction of $\mathcal{M}$ to $\mathrm{Spec}(\Z_{(p)})$ is regular. 
    \item If $n\geq 6$, the reduction mod $p$ is geometrically normal. 
    \item The line bundle of modular forms of weight $1$ extends to a line bundle on $\mathcal{M}$ that we denote by $\mathcal{L}$.
\end{enumerate}

Furthermore, given a $K$-admissible polyhedral complete cone decomposition,  $\mathcal{M}$ admits by \cite[Theorem 4.1.5]{madapusitor} a toroidal compactification $\mathcal{M}^{\Sigma}$ proper over $\mathrm{Spec}(\Z)$ and which extends the compactification $M^{\Sigma}$ previously defined over $\Q$. Moreover, it has a stratification  \begin{align}\label{eq:stratification-integral}
    \mathcal{M}^{\Sigma}=\bigsqcup_{(\Phi,\sigma)\in\mathrm{Start}_K(G,\D,\Sigma)} \mathcal{B}^{\Phi,\sigma}
\end{align}
which extends the stratification in  \Cref{eq:stratification-generic} and such every stratum is flat over $\Z$.
The unique open stratum is $\mathcal{M}$ and its complement is a Cartier divisor. Moreover, for any cusp label representative $(\Phi,\sigma)$, the tower of maps \[M_\Phi(\sigma)\rightarrow \overline{M}_\phi\rightarrow M^h_\Phi\]
has an integral model 
\[\cM_\Phi(\sigma)\rightarrow \overline{\cM}_\phi\rightarrow \cM^h_\Phi\]
which satisfies the following: the abelian scheme $A_\Phi$ has an extension $\mathcal{A}_\Phi\rightarrow \cM^h_\Phi$ such that the map $\overline{\cM}_\phi\rightarrow \cM^h_\Phi$ is a torsor under $\mathcal{A}_\Phi$ and the map $\cM_\Phi(\sigma)\rightarrow \overline{\cM}_\phi$ is a twisted torus embedding with structure group the torsor $\mathcal{T}_\Xi $ extending $T_\Xi$. Finally, the boundary component $Z_\Phi(\sigma)$ has a flat extension $\mathcal{Z}_\Phi(\sigma)$ such that we have an isomorphism of completions:
\begin{align}\label{eq:iso-comp-int}
\Delta_K(\Phi,\sigma)\backslash\widehat{\cM}_\Phi(\sigma)\simeq \widehat{\cM}^{\Sigma}
\end{align}
extending the isomorphism in \Cref{eq:iso-completions}. See \cite[Theorem 4.1.5]{madapusitor} and \cite[Section 8.1]{howardmadapusi} for more details. 

Fix a prime $p$. The goal of the next two subsections is to describe the formal completions of $\mathcal{M}^{\Sigma}$ along the boundary divisors of these compactifications explicitly over $\Z_{(p)}$ in the type II and the type III case.
\subsubsection{Type II}\label{s:typeII-integral}
Let $(\Upsilon,\sigma)$ be a toroidal stratum representative of type II where $\sigma$ is the unique one dimensional ray. 

Let $\mathcal{T}_\Upsilon=\mathrm{Spec}\left(\Z_{(p)}[q,q^{-1}]\right)$ with partial compactification $\mathcal{T}_\Upsilon(\sigma)= \mathrm{Spec}\left(\Z_{(p)}[q]\right)$. By \Cref{eq:stratification-integral} and \cite[Theorem 4.1.5 (2-4)]{madapusitor}, the morphism $\mathcal{M}_\Upsilon\rightarrow \overline{\mathcal{M}}_\Upsilon$ is a torsor under $\mathcal{T}_\Upsilon$ and the morphism $ \overline{\mathcal{M}}_\Upsilon\rightarrow \mathcal{M}^h_\Xi$ is a torsor under $D\otimes\mathcal{E}$ where $\mathcal{E}\rightarrow \mathcal{M}^h_\Xi$ is the universal elliptic curve. Moreover, the partial toroidal compactification of $\mathcal{M}_\Xi$ is given as a twisted torus embedding over $\overline{\mathcal{M}}_\Upsilon$ with fibers isomorphic to $\mathcal{T}_\Upsilon(\sigma)$. In particular, the formal completion of $\mathcal{M}_{\Upsilon}$ along the boundary component is describe by the following diagram: 
\begin{align}\label{diagram-typeII}
\widehat{\mathcal{M}}_\Upsilon(\sigma)\xrightarrow{\widehat{\mathcal{T}}_\Upsilon(\sigma)} \overline{\mathcal{M}}_\Upsilon\xrightarrow{D\otimes\mathcal{E}} \mathcal{M}^h_\Upsilon,
\end{align}
where $\widehat{\mathcal{T}}_\Upsilon(\sigma)=\mathrm{Spf}\left(\Z_{(p)}[[q]]\right)$. 

\subsubsection{Type III}\label{s:typeIII-integral}
Let $(\Xi,\sigma)$ be a toroidal stratum representative of type III  such that $\sigma$ is one dimensional and generated by a primitive integral element $\omega\in C_\Xi$ with $(\omega.\omega)=-2N$. Let $\mathcal{T}_\Xi=\mathrm{Spec}\left(\Z_{(p)}[q_\alpha]_{\alpha\in\Gamma_\Xi^{\vee}}\right)$ and recall that we have a $T_\Xi$ torsor structure
\[M_\Xi\rightarrow \mathrm{Sh}_{\nu_\Xi(K_\Xi)}(\mathbb{G}_m,\mathcal{H}_0).\]

The cone $\sigma$ determines a  partial compactification $\mathcal{T}_\Xi(\sigma)=\mathrm{Spec}\left(\Z_{(p)}[q_\alpha]_{\underset{(\alpha,\omega)\geq 0}{\alpha\in\Gamma_\Xi^{\vee}}}\right)$ and also a partial compactification $\mathcal{M}_\Xi\hookrightarrow \mathcal{M}_\Xi(\sigma)$ which is a twisted torus embedding with fibers $\mathcal{T}_\Xi(\sigma)$.

The boundary divisor in $\mathcal{T}_\Xi(\sigma)$ is defined by the ideal $I_\sigma=(q_\alpha, (\alpha,\omega)>0)$. If $\omega'\in\Gamma^\vee_\Xi$ is as before an element such that $(\omega'.\omega)=1$, then $I_\sigma=(q_{\omega'})$. The formal completion of $\mathcal{T}_{\Xi}(\sigma)$ along $I_\sigma$ is then given by

\[\widehat{\mathcal{T}}_\Xi=\mathrm{Spf}\left(\Z_{(p)}[q_\alpha,\, \alpha\in\Gamma_\Xi^{\vee}\cap \omega^{\bot}][[q_{\omega'}]]\right).\]

Recall that we have a morphism of Shimura data 
\[(Q_\Xi,\D_\Xi)\xrightarrow{v_\Xi}(\mathbb{G}_m,\mathcal{H}_0),\]
and let $s$ be the section of $v_\Xi$ defined in \Cref{ss:fomral-tIII}. Let $K_0\subset \mathbb A_f^{\times}$ be a compact open subgroup such that $s(K_0)\subset K_\Xi$. We can furthermore assume that $K_0$ factors as \[K_0=\Z_p^{\times}.K_0^p.\]
Let $F$ be the abelian extension of $\Q$ determined by the reciprocity morphism in global class field theory:  
\[\mathrm{rec}: \Q^{\times}_{>0}\backslash \mathbb{A}_f^{\times}/K_0\simeq \mathrm{Gal}(F/\Q).\]
Fix a prime $\mathfrak{P}\subset \mathcal{O}_F$ above $p$ and let $R$ be the localization of $\mathcal{O}_F$ at $\mathfrak{P}$.
Then using similar arguments as in \cite[Proposition 8.2.3]{howardmadapusi}, we have the following proposition.

\begin{proposition}
There is an isomorphism \[\bigsqcup_{\Q^{\times}_{>0}\backslash \mathbb{A}_f^{\times}/K_0} \widehat{\mathcal{T}}_{\Xi}(\sigma)_{/R}\rightarrow \widehat{\mathcal{M}}_{\Xi,0}(\sigma)/R \]
of formal Deligne--Mumford stacks over $R$ whose base change to $\C$ agrees with \Cref{p:local-chart}. 
Moreover, the map 
\[\widehat{\mathcal{M}}_{\Xi,0}(\sigma)/R\rightarrow \widehat{\mathcal{M}}_{\Xi}(\sigma)/R\]
is an \'etale map of Deligne--Mumford stacks given as the quotient by $K_\Xi/K_{\Xi,0}$.
\end{proposition}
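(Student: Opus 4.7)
The plan is to transcribe the argument of \cite[Proposition 8.2.3]{howardmadapusi}, which is the analogous integral statement for type III boundary strata in the Howard--Madapusi setting and is itself the integral upgrade of the complex \cite[Proposition 4.6.2]{howardmadapusi} recalled here as \Cref{p:local-chart}. The overall structure is dictated by the Levi decomposition $Q_\Xi = \mathbb{G}_m \ltimes U_\Xi$ afforded by the section $s$: this reduces the local geometry of $\mathcal{M}_{\Xi,0}(\sigma)$ to understanding a $\mathcal{T}_\Xi$-torsor over the zero-dimensional Shimura variety $\mathrm{Sh}_{\nu_\Xi(K_0)}(\mathbb{G}_m,\mathcal{H}_0)$, followed by the $\sigma$-partial compactification and completion along the resulting boundary divisor.

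First I would identify the integral model of the base. The Shimura variety $\mathrm{Sh}_{\nu_\Xi(K_0)}(\mathbb{G}_m,\mathcal{H}_0)$ has reflex field $\Q$, and by global class field theory its canonical $\Q$-model is $\mathrm{Spec}(F)$ with connected components indexed by $\Q^\times_{>0}\backslash\mathbb{A}_f^\times/K_0 \simeq \mathrm{Gal}(F/\Q)$ via the reciprocity morphism. Its natural integral model is $\mathrm{Spec}(\mathcal{O}_F)$, and the assumption $K_0 = \Z_p^\times \cdot K_0^p$ ensures that $p$ is unramified in $F$. After base change to $R = \mathcal{O}_{F,\mathfrak{P}}$, the base therefore becomes a disjoint union of copies of $\mathrm{Spec}(R)$ indexed by the double coset set.

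Next I would use the section $s$ to trivialize the torsor $\mathcal{M}_{\Xi,0} \to \mathrm{Sh}_{\nu_\Xi(K_0)}(\mathbb{G}_m,\mathcal{H}_0)$ integrally on each component. Generically this is exactly the content of \Cref{p:local-chart}, and the extension to $R$ is possible because the section is defined at the level of Shimura data and the compact open $K_{\Xi,0}$ is by construction a semidirect product compatible with $s$. Passing to the $\sigma$-partial compactification yields $\mathcal{T}_\Xi(\sigma)_{/R}$ on each component, and completing along the boundary ideal $I_\sigma = (q_{\omega'})$ produces $\widehat{\mathcal{T}}_\Xi(\sigma)_{/R}$. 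Compatibility with \Cref{p:local-chart} after base change to $\C$ is automatic since every step uses the same canonical data.

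The second assertion about the \'etale quotient map follows directly from the torsor structure: the finite group $K_\Xi/K_{\Xi,0}$ acts on $\widehat{\mathcal{M}}_{\Xi,0}(\sigma)$ via its natural action on $Q_\Xi(\mathbb{A}_f)/K_{\Xi,0}$ and on the formal coordinates, and for $K$ neat this action is free with quotient $\widehat{\mathcal{M}}_\Xi(\sigma)$. The main obstacle I anticipate is the integral torsor trivialization step: one must check that the generic trivialization given by $s$ extends uniquely across the prime $\mathfrak{P}$ in a way compatible with both the torus action and the canonical model at $p$. This is a compatibility between the reciprocity map and the local $p$-adic torsor structure, and it is precisely to make it work that Howard--Madapusi impose the factorization $K_0 = \Z_p^\times \cdot K_0^p$; the verification should then amount to tracking this compatibility through the construction of $\mathcal{M}_\Xi(\sigma)$ in \cite{madapusitor}.
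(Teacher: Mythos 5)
Your proposal follows the same blueprint as the paper: both transcribe Howard--Madapusi Proposition 8.2.3, identifying the integral base via the normalization of $\Z_{(p)}$ in $\mathrm{Sh}_{K_0}(\mathbb{G}_m,\mathcal{H}_0)$ and using the section $s$ and the complex description of \Cref{p:local-chart} to trivialize the torus fibration. The one place where your account is vaguer than the paper's is the step you flag as the "main obstacle": you say extending the generic trivialization across $\mathfrak{P}$ "should amount to tracking this compatibility through the construction," but the paper's proof names the mechanism explicitly --- flatness of both $\widehat{\mathcal{T}}_\Xi(\sigma)_{/R}$ and $\widehat{\mathcal{M}}_{\Xi,0}(\sigma)_{/R}$ over $\Z_{(p)}$ is what forces a generic isomorphism to extend uniquely over $R$, so no separate integral verification of the torsor trivialization is needed; this flatness-plus-normalization argument is the actual content borrowed from Howard--Madapusi, and citing it would close the gap you yourself identified.
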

The proof follows from the description given over $\C$ \ref{p:local-chart}, the flatness of both sides over $\Z_{(p)}$ and the fact the normalization of $\mathrm{Spec}(\Z_{(p)})$ in  $\mathrm{Sh}_{K_0}(\mathbb{G}_m,\mathcal{H}_0)$ is isomorphic to $\bigsqcup_{a\in\Q^{\times}_{>0}\backslash \mathbb{A}_f^{\times}/K_0}\mathrm{Spec}(R)$, see \cite[Proposition 8.2.3]{howardmadapusi} for a proof and more details. 
\subsection{Special divisors}\label{s:special-div-main}
We continue to assume in this section that the lattice $(L,Q)$ is maximal and let  $\Sigma$ be a smooth $K$-admissible cone decomposition. 

For every $\beta\in L^\vee/L$, $m\in Q(\beta)+\Z$ such that $m>0$, one can define a {\it  special divisor} $\mathcal{Z}(\beta,m)\rightarrow \mathcal{M}$ following \cite[Definition 4.5.6]{agmp-annals}. We recall briefly the definition and refer to {\it loc. cit.} for more details.

The Shimura variety $\mathcal{M}$ carries the family of Kuga-Satake abelian varieties $\mathcal{A}\rightarrow \mathcal{M}$. For any scheme $S\rightarrow \mathcal{M}$, a group of special quasi-endomorphisms $V_\beta(\mathcal{A}_S)$ is defined in \cite[Section 4.5]{agmp-annals}. Then the functor sending a scheme $S$ to \[\mathcal{Z}(\beta,m)(S)=\{x\in V_\beta(\mathcal{A}_S)| Q(x)=m\}\]
is representable by a Deligne--Mumford stack which is \'etale locally an effective Cartier divisor on $\mathcal{M}$. We will rather consider its image in $\mathcal{M}$ by a procedure described in \cite{howardmadapusi} after Proposition 6.5.2. By abuse of notation, we also denote by $\mathcal{Z}(\beta,m)$ its closure in  $\mathcal{M}^{\Sigma}$, which is again a Cartier divisor. 
\medskip 

In what follows, we will give an explicit description of $\mathcal{Z}(\beta,m)$ in the formal completions of $\mathcal{M}^{\Sigma}$ along its boundary components. Since for our purposes we only need $\beta=0$ and $m$ coprime to $p$, we will only describe what happens in this situation and we abbreviate for short $\mathcal{Z}(\beta,m)=\mathcal{Z}(m)$. We assume that $m\geq 1$ is coprime to $p$ for the rest of this section. 
\medskip

By \cite[Page 434]{agmp-annals}, $\cZ(m)(\C)$ has a complex uniformization as follows: for any $g\in G(\mathbb{A}_f)$, let $L_g=g.\widehat{L}\cap L_\Q$ and consider the sub-Hermtian domain of $\D$
\[\D^{\circ}(\lambda)=\{x\in\D^{\circ}| (x,\lambda)=0\},\]
 where $\lambda\in L_g$, $Q(\lambda)=m$. Then $\cZ(m)(\C)$ is equal to the union of $\D^{\circ}(\lambda)$ for $g\in G(\mathbb{A}_f)$ and $\lambda\in L_g$ with $Q(\lambda)=m$. 
 
For any $\lambda \in L_g$ with $Q(\lambda)=m$, let $G_\lambda$ be the fixator of $\lambda$, $L_\lambda$ the orthogonal lattice to $\lambda$ in $L_\Q$, and let $\D_\lambda\subset \D$ be the orthogonal to $\lambda$. Notice that $\D_\lambda$ does not depend on $g$ but only on $\lambda\in L_\C$. Notice that since $m$ is coprime to $p$, the lattice $L_\lambda$ is also maximal at $p$. Then $(G_\lambda,D_\lambda)$ is again a Shimura datum of GSpin associated to the lattice $(L_\lambda,Q)$ which is of signature $(b-1,2)$ and has reflex field equal to $\Q$. 
If we choose $K_\lambda\subset G_\lambda(\mathbb{A}_f)$ a compact open subgroup as in \cite[Equation (4.1.2)]{agmp-annals}, then $K_\lambda \subset K\cap G_\lambda(\mathbb A_f)$ and we obtain a morphism of complex Shimura varieties
\[M_\lambda(\C)\rightarrow M(\C).\]
By the description \cite[Equation (2.4)]{agmp-annals}, the union over $g\in G(\mathbb{A}_f)$, $\lambda\in L_g$ with $Q(\lambda)=m$ of the images of $M_\lambda(\C)$ is equal to $\cZ(m)(\C)$.

Now since $(G_\lambda,D_\lambda)$ is again a Shimura variety of GSpin type associated to a lattice maximal at $p$, the discussion in the previous sections applies verbatim to the Shimura variety $M_\lambda$ and yields similar description for the compactification and the integral model over $\Z_{(p)}$. In particular, we have a map between integral models $\cM_\lambda\rightarrow \cM$ over $\Z_{(p)}$ which factors through $\cZ(m)$ by \cite[Page 82]{howardmadapusi}.
\[\cM_\lambda\rightarrow \cZ(m)\hookrightarrow \cM\]
and the union over of images\footnote{This union is in fact finite.} of such maps for $g\in G(\mathbb{A}_f)$ and $\lambda\in L_g$ with $Q(\lambda)=m$ is equal to $\cZ(m)$.

Let $(\Phi,\sigma)$ be a toroidal stratum representative for $\cM$. From the description of the parabolic subgroups of $\mathrm{GSpin}(b,2)$, we have the following lemma.
\begin{lemme}\label{admissibi}
The group $P\cap G_\lambda$ is an admissible parabolic subgroup of $G_\lambda$ if and only if $\lambda\in I_\Phi^{\bot}$.
\end{lemme}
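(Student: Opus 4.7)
The plan is to work directly with the orthogonal decomposition $L_\Q = \Q\lambda \oplus \lambda^\bot$, which is available because $Q(\lambda)=m>0$, and to compare $P \cap G_\lambda$ to the list of admissible parabolics of $G_\lambda = \mathrm{GSpin}(\lambda^\bot)$. Recall that the admissible parabolics of a GSpin group are exactly the whole group together with the stabilizers of the non-zero totally isotropic subspaces of the associated quadratic space.

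For the ``if'' direction, suppose $\lambda \in I_\Phi^\bot$. If $P=G$ then $I_\Phi = 0$ and $P \cap G_\lambda = G_\lambda$ is admissible. Otherwise the hypothesis says $I_\Phi \subseteq \lambda^\bot$, so $I_\Phi$ is a totally isotropic subspace of $\lambda^\bot$ of the same dimension. Its stabilizer in $G_\lambda$ is therefore a maximal proper parabolic of $G_\lambda$, and it coincides with $P \cap G_\lambda$ since an element of $G$ fixing $\lambda$ preserves $I_\Phi \subseteq L_\Q$ if and only if it preserves it as a subspace of $\lambda^\bot$.

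For the converse, assume $\lambda \notin I_\Phi^\bot$, so that $P$ is proper and $\dim I_\Phi \in \{1,2\}$. In the type~III case, pick a generator $v$ of $I_\Phi$ and decompose $v = a\lambda + v_1$ with $a \in \Q$, $v_1 \in \lambda^\bot$; the hypothesis forces $a \neq 0$, and then $v_1 \neq 0$ has $Q(v_1) = -a^2 m < 0$. A direct check shows $g \in G_\lambda$ preserves the line $\Q v$ if and only if $gv_1 = v_1$, so $P \cap G_\lambda$ is the stabilizer in $G_\lambda$ of a non-isotropic vector, hence a reductive GSpin group of smaller rank, which is neither $G_\lambda$ nor a parabolic subgroup. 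In the type~II case, let $w$ span the one-dimensional line $I_\Phi \cap \lambda^\bot$ and choose $v \in I_\Phi$ with $v = b\lambda + v_2$, $b \neq 0$, $v_2 \in \lambda^\bot$; a similar computation yields
\[ P \cap G_\lambda = \{\, g \in \mathrm{Stab}_{G_\lambda}(\Q w) : gv_2 - v_2 \in \Q w \,\}. \]
Since $v_2 \in w^\bot$ has non-zero norm, its image in $w^\bot/\Q w$ is a non-zero non-isotropic class, and the Levi quotient of $\mathrm{Stab}_{G_\lambda}(\Q w)$ acts on $w^\bot/\Q w$ with positive-dimensional orbits through such classes. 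Hence the auxiliary condition $gv_2 - v_2 \in \Q w$ cuts out a proper subgroup of $\mathrm{Stab}_{G_\lambda}(\Q w)$; since distinct maximal proper parabolics are incomparable, this proper subgroup cannot itself be a maximal proper parabolic of $G_\lambda$, nor of course is it $G_\lambda$.

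The only mildly delicate point—more bookkeeping than a real difficulty—lies in the type~II case, where one must verify that the constraint on $gv_2$ cuts $\mathrm{Stab}_{G_\lambda}(\Q w)$ down properly. This follows from the non-degeneracy of the induced quadratic form on $w^\bot/\Q w$, a quadratic space of signature $(b-2,1)$ whose orthogonal group moves the non-isotropic class represented by $v_2$.
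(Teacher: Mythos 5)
Your proof is correct. The paper does not actually write out a proof of this lemma — it records it as an immediate consequence of the classification of admissible parabolics of $\mathrm{GSpin}(b,2)$ as stabilizers of totally isotropic subspaces — and your argument is precisely a careful elaboration of that classification via the splitting $L_\Q = \Q\lambda \oplus \lambda^\bot$, so it takes the same route the paper gestures at.
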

Notice also that if $\lambda\notin I_\Phi^{\bot}$, then the image of $\D_\lambda$ in $ M^{\Sigma}(\C)$ will not intersect the boundary components parameterized by $\Phi$, as its projection to the Baily-Borel compactification will not do so. Hence they will not appear in the formal completions of $\cM^{\Sigma}$ along these boundary components.

We can write $\Phi=(P,\D^{\circ},h)$ and let $\lambda \in L_g$ with $Q(\lambda)=m$ such that $\lambda\in I_\Phi^{\bot}$. \Cref{admissibi} shows that $(\Phi,\sigma)$ can also be seen as a toroidal stratum representative with respect to $(G_\lambda,\D_\lambda)$ by considering $P\cap G_\lambda$, see \cite[Section 2.1.28]{madapusiintegral} for more details. Let $\mathcal{M}_{\lambda,\Upsilon}$ be the integral model over $\Z_{(p)}$ of the mixed Shimura variety associated to $\Phi$. We get then a morphism of mixed Shimura varieties 
\[\mathcal{M}_{\lambda,\Phi}\rightarrow \mathcal{M}_\Phi,\]
as well as a morphism of partial compactifications respecting the strata
\[\mathcal{M}_{\lambda,\Phi}(\sigma)\rightarrow \mathcal{M}_\Phi(\sigma).\] 
By \cite[Proposition 2.1.29]{madapusitor}, the morphism induced at the level of formal completions along the boundary strata given by $\sigma$ is compatible with the toroidal compacitifications of $\mathcal{M}_\lambda$ and $\mathcal{M}$. In particular, we get a commutative diagram
\begin{align*}
\xymatrix{\widehat{\mathcal{M}}_{\lambda,\Phi}(\sigma) \ar[rr]\ar[d] && \widehat{\mathcal{M}}_{\Phi}(\sigma)\ar[d]\\ 
\widehat{\mathcal{Z}}(m) \ar[rr] && \widehat{\mathcal{M}}^{\Sigma}, 
}
\end{align*}
where the right vertical map is an \'etale cover of Deligne--Mumford stacks, the left vertical map is an \'etale cover of an open and closed subset by \cite[Page 82]{howardmadapusi}. Finally, the union over $g \in G(\mathbb{A}_f)$, $\lambda\in L_g$ with $Q(\lambda)=m$ of the images of the left map covers the whole $\widehat{\cZ}(m)$.
\subsubsection{Special divisors along type II boundary components}\label{ss:special-tII}
 Let $(\Upsilon,\sigma)$ be a toroidal stratum representative of type II. 

Let $\lambda \in L$ with $Q(\lambda)=m$ such that $\lambda\in I_\Upsilon^{\bot}$ and $m$ is coprime to $p$. We have a morphism of formal completions of the partial compactifications of mixed Shimura varieties 
\[\widehat{\mathcal{M}}_{\lambda,\Upsilon}(\sigma) \rightarrow \widehat{\mathcal{M}}_{\Upsilon}(\sigma).\]
Let $x\in \mathcal{B}^{\Upsilon,\sigma}(\overline{\F}_p)\subset \mathcal{M}_\Upsilon(\sigma)(\overline{\F}_p)$ and let $\mathcal{O}_{\mathcal{M}_{\Upsilon}(\sigma),x}$ be the local ring at $x$. Let $\overline{x}$ be the image of $x$ in $\overline{\mathcal{M}}_\Upsilon(\overline{\F}_p)$ and let $z$ the image in $\mathcal{M}^h_\Upsilon(\overline{\F}_p)$.
If follows from \Cref{diagram-typeII} that the formal completion $\widehat{\mathcal{O}}_{\mathcal{M}_\Upsilon(\sigma),x}$ is isomorphic to \[\widehat{\mathcal{O}}_{\mathcal{M}_\Upsilon(\sigma),x}\simeq \Z_p[[X]]\widehat{\otimes} \widehat{\mathcal{O}}_{\overline{\mathcal{M}}_{\Phi},\overline{x}}.\]
Moreover, the pull-back of the torsor $\overline{\mathcal{M}}_\Upsilon\rightarrow \mathcal{M}^h_\Upsilon$ to $\mathrm{Spf}(\widehat{\mathcal{O}}_{\mathcal{M}^h_\Upsilon,z})$ is trivial, as it is trivial by reduction to $\overline{\mathbb{F}}_p$ and we can lift formally any  section. Hence
\[\widehat{\mathcal{O}}_{\overline{\mathcal{M}}_{\Phi},\overline{x}}\simeq \widehat{\mathcal{O}}_{D\otimes\mathcal{E},\overline{x}}\]

For $\lambda\in D$, consider the map over $\mathcal{M}^h_\Upsilon$ 
\begin{align}\label{kernel}
    D\otimes \mathcal{E}\xrightarrow{(\lambda.\,)\otimes \mathrm{Id}} \mathcal{E}.
\end{align} 
Its kernel is flat over $\mathcal{M}^h_\Upsilon$. Let $I_\lambda\subset\lambda$ be the ideal defining it. Then $\widehat{I}_\lambda\hookrightarrow \widehat{\mathcal{O}}_{D\otimes\mathcal{E},\overline{x}}$ is flat over $\widehat{\mathcal{O}}_{\mathcal{M}^h_\Upsilon,z}$.

\begin{proposition}\label{ss:dia-sp-II}
The formal completion $\widehat{\cZ}(m)$ along $x$ is the union over $\lambda\in D$ with $Q(\lambda)=m$ of the vanishing loci inside $\widehat{ \mathcal{O}}_{\mathcal{M}_{\Upsilon}(\sigma),x}$ of the ideals $\Z_p[[X]]\widehat{\otimes} \widehat{I}_\lambda$. 
\end{proposition}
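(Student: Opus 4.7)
The plan is to combine the general description of $\widehat{\cZ}(m)$ as a union of images of sub-Shimura varieties with the explicit product decomposition of the formal completion given by \Cref{diagram-typeII}.

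First I would recall from the setup of \Cref{s:special-div-main}, together with \Cref{admissibi}, that only $\lambda\in L_g$ with $\lambda\in I_\Upsilon^{\bot}=J^{\bot}$ contribute to the formal neighborhood at $x$, and that $\widehat{\cZ}(m)$ at $x$ is covered by the images of the \'etale morphisms $\widehat{\cM}_{\lambda,\Upsilon}(\sigma)\to \widehat{\cM}_\Upsilon(\sigma)$ for such $\lambda$ with $Q(\lambda)=m$ (as $g$ varies, only finitely many distinct images appear). As will be shown in the next step, the image depends on $\lambda$ only through its class $\bar\lambda\in D=J^{\bot}/J$, so the union may be reindexed by $\bar\lambda\in D$ with $Q(\bar\lambda)=m$.

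Next I would analyze $\cM_{\lambda,\Upsilon}\to \cM_\Upsilon$ level by level along the tower $\cM_\Upsilon(\sigma)\to \overline{\cM}_\Upsilon\to \cM^h_\Upsilon$. The key observation is that the unipotent radical $W_\Upsilon=\bigwedge^{2}J_\Q$ and the pure Shimura datum $(G^h_\Upsilon,\D^h_\Upsilon)=(\SL_2,\mathbb{H})$ depend only on the isotropic plane $J$, not on the ambient lattice. Consequently the torus $\cT_\Upsilon$, the target modular curve $\cM^h_\Upsilon$, and its universal elliptic curve $\mathcal{E}$ are the same whether computed from $L$ or from $L_\lambda$, so the torus coordinate $X$ pulls back to itself and accounts for the factor $\Z_p[[X]]$. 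In the abelian-scheme direction, since $\lambda\in J^{\bot}$ the positive-definite lattice of the sub-datum is $D_\lambda=(J^{\bot}\cap \lambda^{\bot})/J=\bar\lambda^{\bot}\subset D$, and by functoriality the induced closed immersion $D_\lambda\otimes \mathcal{E}\hookrightarrow D\otimes \mathcal{E}$ is precisely the kernel of $(\bar\lambda,\cdot)\otimes \mathrm{Id}$, i.e., the vanishing locus of $I_\lambda$ as defined in \eqref{kernel}.

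Passing to formal completions at $\bar x$ and combining both factors, the image of $\widehat{\cM}_{\lambda,\Upsilon}(\sigma)$ inside $\widehat{\mathcal{O}}_{\cM_\Upsilon(\sigma),x}\simeq \Z_p[[X]]\widehat{\otimes} \widehat{\mathcal{O}}_{D\otimes \mathcal{E},\bar x}$ is cut out by $\Z_p[[X]]\widehat{\otimes}\widehat{I}_\lambda$, and taking the union over $\bar\lambda\in D$ with $Q(\bar\lambda)=m$ yields the desired description. The main obstacle is the third step, namely verifying at the integral level, via the functoriality of \cite[Proposition 2.1.29]{madapusitor} applied to the inclusion of GSpin Shimura data $(G_\lambda,\D_\lambda)\hookrightarrow(G,\D)$, that the sub-abelian scheme $D_\lambda\otimes \mathcal{E}_\lambda$ really coincides with $\ker((\bar\lambda,\cdot)\otimes \mathrm{Id})\subset D\otimes \mathcal{E}$, and that any two lifts $\lambda,\lambda'\in J^{\bot}$ of the same class $\bar\lambda$ produce the same image in $\widehat{\cM}_\Upsilon(\sigma)$.
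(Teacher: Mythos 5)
Your proposal follows the same overall strategy as the paper's proof: use \Cref{admissibi} to restrict to $\lambda\in J^{\bot}$, analyze the morphism $\cM_{\lambda,\Upsilon}\to\cM_\Upsilon$ along the tower $\cM_\Upsilon(\sigma)\to\overline{\cM}_\Upsilon\to\cM^h_\Upsilon$, identify $D_\lambda=\bar\lambda^{\bot}$, observe that the torus and base modular curve coincide for $L$ and $L_\lambda$, and pass to completed local rings. That part matches.

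However, the final step contains a genuine error. You claim that "any two lifts $\lambda,\lambda'\in J^{\bot}$ of the same class $\bar\lambda$ produce the same image in $\widehat{\cM}_\Upsilon(\sigma)$," and accordingly you reindex the union by $\bar\lambda\in D$ with one contribution per class. This is not what happens, and it is precisely the point the paper's proof addresses. The map \eqref{kernel}, namely $D\otimes\mathcal{E}\xrightarrow{(\lambda\cdot)\otimes\mathrm{Id}}\mathcal{E}$, has a kernel whose identity component is $\bar\lambda^{\bot}\otimes\mathcal{E}$ but which in general has additional connected components (torsion translates). A single lift $\lambda\in J^{\bot}\cap L$ of $\bar\lambda$ gives an image which is \emph{contained} in that kernel — it is one translate of $\bar\lambda^{\bot}\otimes\mathcal{E}$ — but different lifts land in different translates. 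The paper's proof explicitly says: ``By considering all the $\lambda\in J^{\bot}$ that map to a given class $\overline{\lambda}\in D$, we get that the image is exactly the kernel of the map \eqref{kernel}.'' Without this step your argument only shows that each branch of $\widehat{\cZ}(m)$ sits inside the vanishing locus of $\Z_p[[X]]\widehat{\otimes}\widehat{I}_\lambda$, not that the union of the branches equals it; the containment would be strict and the computation of intersection multiplicities downstream would be off. You need to replace your "same image" claim with the union-over-lifts argument: the $D_\lambda\otimes\mathcal{E}$-torsor $\overline{\cM}_{\lambda,\Upsilon}$ maps to $\overline{\cM}_\Upsilon$ (a $D\otimes\mathcal{E}$-torsor) with image a translate depending on the lift, and as $\lambda$ ranges over all lifts of $\bar\lambda$ in $J^{\bot}\cap L$ these translates exhaust all components of $\ker\bigl((\bar\lambda\cdot)\otimes\mathrm{Id}\bigr)$.
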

\begin{proof}
Let $\lambda\in L$ such that $\lambda\in J^{\bot}$ and $Q(\lambda)=m$. Then we have a description of the mixed Shimura variety $M_{\Upsilon,\lambda}$ similar to \Cref{diagram-typeII}, namely, it has a fibration structure which fits into the following diagram: 

\begin{align*}
\xymatrix{\widehat{\mathcal{M}}_{\lambda,\Upsilon}(\sigma) \ar[rr]^{\widehat{\mathcal{T}}_{\Upsilon}(\sigma)} \ar[d] && \overline{\mathcal{M}}_{\lambda,\Upsilon}\ar[rr]^{D_\lambda\otimes\mathcal{E}} \ar[d] && \mathcal{M}^h_{\lambda,\Upsilon} \ar[d]
 \\ 
\widehat{\mathcal{M}}_\Upsilon(\sigma) \ar[rr]^{\widehat{\mathcal{T}}_\Upsilon(\sigma)} && \overline{\mathcal{M}}_\Upsilon\ar[rr]^{D\otimes\mathcal{E}} && \mathcal{M}^h_\Upsilon. 
}
\end{align*}
One can check that $D_\lambda=\overline{\lambda}^{\bot}$ where $\overline{\lambda}$ is the image of $\lambda$ in $D=J^{\bot}/J$. 
Moreover, the right vertical map in the above diagram is an \'etale cover and the vertical middle map is equivariant with respect to the inclusion \[\overline{\lambda}^{\bot}\otimes\mathcal{E}\hookrightarrow D\otimes\mathcal{E},\]
and the left vertical map has image given by an open and closed subset of $\widehat{\cZ}(m)$.

Let $z'\in \mathcal{M}^h_{\lambda,\Upsilon}(\overline{\F}_p)$ be a point mapping to $z$, then  $\widehat{\mathcal{O}}_{\mathcal{M}^h_{\lambda,\Upsilon,z'}}\simeq\widehat{\mathcal{O}}_{\mathcal{M}^h_{\Upsilon,z}}$. 
Hence the above diagram becomes at the level of completed local rings:
\begin{align*}
\xymatrix{\mathrm{Spf}(\Z_p[[X]]\widehat{\otimes}\widehat{\mathcal{O}}_{\overline{\lambda}^\bot\otimes\mathcal{E},\overline{x'}}) \ar[rr] \ar[d] && \mathrm{Spf}(\widehat{\mathcal{O}}_{\overline{\lambda}^\bot\otimes\mathcal{E},\overline{x'}})\ar[rr] \ar[d] && \mathrm{Spf}(\widehat{\mathcal{O}}_{\mathcal{M}^h_{\lambda,\Upsilon,z'}}) \ar[d]^{\simeq}
 \\ 
\mathrm{Spf}(\Z_p[[X]]\widehat{\otimes} \widehat{\mathcal{O}}_{D\otimes\mathcal{E},\overline{x}}) \ar[rr] &&\mathrm{Spf}(\widehat{\mathcal{O}}_{D\otimes\mathcal{E},\overline{x}}) \ar[rr] && \mathrm{Spf}(\widehat{\mathcal{O}}_{\mathcal{M}^h_{\Upsilon,z}}). 
}
\end{align*}

where the vertical map is contained in the kernel of the map \ref{kernel}. By considering all the $\lambda\in J^{\bot}$ that map to a given class $\overline{\lambda}\in D$, we get that the image is exactly the kernel of the map \ref{kernel} and hence the image of left vertical map is defined by the ideal $\Z_p[[X]]\widehat{\otimes} \widehat{I}_\lambda$, see \cite[Equation (26)]{zemel} for a description over $\C$. 
Finally, since $\widehat{\cZ}(m)$ is equal to the union of such images, the conclusion follows.
\end{proof}
\subsubsection{Special divisors along type III boundary components}\label{ss:special-tIII}
Let $(\Xi,\sigma)$ be a stratum representative of type III. Let $K_I=I^{\bot}/I$ be the Lorentzian lattice as introduced in \Cref{s:typeIII} and we continue to assume that $\sigma$ is a one dimensional inner ray. Let $\omega\in K_I\cap C_\Xi$ be a generator of $\sigma$ with $(\omega.\omega)=-2N$, $N\geq 1$. Let $\omega'\in K_I^{\vee}$ be an element such that $(\omega.\omega')=1$. 

% We first start with the following lemma. 
% \begin{lemma}
% Every $\lambda\in K$ can be decomposed as $\lambda^{+}-\lambda^-$ where $\lambda^{+},\lambda^{-}\in K^{\vee}$ and only one of the following conditions is satisfied:
% \begin{enumerate}
%     \item $(\lambda^{+},\omega)\geq 0$ and $\lambda^{-}=0$;
%     \item $(\lambda^{+},\omega)=0$ and $(\lambda^{-},\omega)<0$.
% \end{enumerate} 
% \end{lemma}
% \begin{proof}
% If $(\lambda.\omega)\geq0$, then we take $\lambda^{+}=\lambda$ and $\lambda^{-}=0$. Otherwise, we take $\lambda^-=-(\lambda.\omega)\omega'$ and $\lambda^{+}=\lambda+\lambda^{-}$.
% \end{proof}

Let $\lambda\in L$ with $Q(\lambda)=m$ and such that $\lambda \in I^{\bot}$. The projection $\overline{\lambda}\in K_I$ defines a divisor in the torus $\mathcal{T}_\Xi= \mathrm{Spec}(\Z_{(p)}[q^\alpha]_{\alpha\in {\Gamma_\Xi^{\vee}}})$ given by the equation $q^{\overline{\lambda}}=1$. 

In the partial compactification $\mathcal{T}_\Xi\hookrightarrow \mathcal{T}_\Xi(\sigma)$, the equation of this divisor becomes $q^{\overline{\lambda}}-1=0$ if $(\overline \lambda.\omega)\geq 0$ or $q^{-\overline \lambda}-1=0$ otherwise. Notice also that this divisor intersects the toric boundary divisor defined by $\sigma$ if and only if $(\omega.\overline \lambda)=0$. We will hence restrict ourselves to this latter situation and we denote by \[\mathcal{T}_{\Xi}(\lambda,\sigma)\hookrightarrow\mathcal{T}_{\Xi}(\sigma) \] the divisor defined by $\lambda$. By construction, it only depends on the class of $\lambda$ in $K_I$. 
\begin{proposition}\label{equation-typeIII}
Let $\widehat{\cZ}(m)$ be the formal completion of $\cZ(m)$ along the boundary component of $\mathcal{M}^{\Xi}$ index by $(\Xi,\sigma)$.  
Then the following diagram is commutative and compatible with \Cref{p:local-chart}. 
\[
\xymatrix{
\underset{{ a\in  \Q^\times_{>0} \backslash \mathbb{A}_f^\times / K_0  }}{\bigsqcup}\underset{{\underset{Q(\lambda)=m, (\lambda.\omega)=0 }{\lambda \in K}}}{\bigsqcup} \widehat{\mathcal{T}}_{\Xi,0}(\lambda,\sigma)_{/\C}  \ar[rr] \ar[d] && \underset{{ a\in  \Q^\times_{>0} \backslash \mathbb{A}_f^\times / K_0  }}{\bigsqcup}\widehat{\mathcal{T}}_{\Xi,0}(\sigma)
  \ar[d]
 \\ 
\widehat{\mathcal{Z}}(m) \ar[rr] & & \widehat{\mathcal{M}}^{\Sigma} 
}
\]
The vertical maps are \'etale coverings of formal Deligne--Mumford stacks and the union over $\lambda\in I^{\bot}$ covers $\widehat{\mathcal{Z}}(m)$.
\end{proposition}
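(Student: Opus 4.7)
The plan is to trace the general description of special divisors along boundary strata from the paragraphs preceding the proposition through the explicit toric description of \Cref{s:typeIII-integral}. The starting point is the covering principle already established: $\widehat{\cZ}(m)$ is the union of the images of the maps $\widehat{\cM}_{\lambda,\Xi}(\sigma) \to \widehat{\cM}^{\Sigma}$ as $\lambda$ ranges over $L_g \cap I^{\bot}$ with $Q(\lambda) = m$ and $g \in G(\mathbb{A}_f)$, with the admissibility constraint $\lambda \in I^{\bot}$ coming from \Cref{admissibi}.

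For each admissible $\lambda$, the first main step is to identify the contribution $\widehat{\cM}_{\lambda,\Xi}(\sigma) \to \widehat{\cM}_{\Xi}(\sigma)$ concretely. Because $(G_\lambda,\D_\lambda)$ is a GSpin Shimura datum attached to $L_\lambda$, which remains maximal at $p$ since $p \nmid m$, the toric description of \Cref{s:typeIII-integral} applies verbatim to it. The relevant Lorentzian lattice is
\[
K_{I,\lambda} = (L_\lambda \cap I^{\bot})/I = K_I \cap \overline{\lambda}^{\bot},
\]
and the associated torus is $\mathcal{T}_{\lambda,\Xi} = \Spec(\Z_{(p)}[q^\alpha]_{\alpha \in K_{I,\lambda}^{\vee}})$. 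The inclusion of cocharacter lattices $K_{I,\lambda} \hookrightarrow K_I$ dualizes to the restriction surjection $K_I^{\vee} \twoheadrightarrow K_{I,\lambda}^{\vee}$, whose kernel is the saturation of $\Z\overline{\lambda}$ inside $K_I^{\vee}$; so the closed immersion $\mathcal{T}_{\lambda,\Xi} \hookrightarrow \mathcal{T}_\Xi$ is cut out on the open torus by the equation $q^{\overline{\lambda}} = 1$.

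Next I would pass to the partial compactification $\mathcal{T}_\Xi(\sigma)$. The equation $q^{\overline{\lambda}} - 1 = 0$ lies in $\Z_{(p)}[q^\alpha]_{(\alpha,\omega)\geq 0}$ exactly when $(\overline{\lambda},\omega) \geq 0$, and one replaces $\overline{\lambda}$ by $-\overline{\lambda}$ otherwise; this produces the divisor $\mathcal{T}_\Xi(\lambda,\sigma)$ of the statement. In the formal completion along the ideal $(q^{\omega'})$ defining the boundary stratum, this divisor meets the boundary precisely when $(\overline{\lambda},\omega) = 0$: otherwise $q^{\overline{\lambda}}$ is divisible by a positive power of $q^{\omega'}$ and the defining equation degenerates to $-1 \equiv 0$ modulo the boundary, contributing nothing. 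This explains the constraint $(\lambda,\omega) = 0$ in the upper-left summation of the diagram.

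The final globalization step is to apply the integral chart of \Cref{s:typeIII-integral} (the integral analogue of \Cref{p:local-chart}) to both $(G,\D)$ and $(G_\lambda,\D_\lambda)$ using a common auxiliary isotropic line $I_* \subset \lambda^{\bot}$ and a common compact open subgroup $K_0 \subset \mathbb{A}_f^{\times}$. This produces compatible disjoint unions indexed by $\Q^{\times}_{>0}\backslash \mathbb{A}_f^{\times}/K_0$ on both sides of the diagram, with the vertical maps \'etale by construction; taking the union over $g \in G(\mathbb{A}_f)$ and admissible $\lambda$ then covers $\widehat{\cZ}(m)$ by the covering principle. The step I expect to be the main obstacle is precisely this compatibility of charts: one must verify that the section $s$ arising from $I_*$ restricts to the analogous section $s_\lambda$ for $(G_\lambda,\D_\lambda)$, that $K_0$ can be shrunk so that both $s(K_0) \subset K_\Xi$ and $s_\lambda(K_0) \subset K_{\lambda,\Xi}$, and that the finite-group quotients $K_\Xi/K_{\Xi,0}$ and $K_{\lambda,\Xi}/K_{\lambda,\Xi,0}$ act compatibly under the embedding. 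Once these bookkeeping checks are done using the Levi decomposition $Q_\Xi = \mathbb{G}_m \ltimes U_\Xi$ restricted to $Q_\Xi \cap G_\lambda$, the remaining compatibilities follow from the uniqueness of normal flat extensions over $\Z_{(p)}$ together with the already established complex statement of \Cref{p:local-chart}.
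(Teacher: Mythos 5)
Your proposal follows essentially the same route as the paper's proof: reduce to the morphism of mixed Shimura varieties $\widehat{\mathcal{M}}_{\lambda,\Xi}(\sigma)\to \widehat{\mathcal{M}}_{\Xi}(\sigma)$ for admissible $\lambda\in I^{\bot}$, make the toric fibration explicit via the inclusion of cocharacter lattices $K_{I,\lambda}=K_I\cap\overline{\lambda}^\bot\hookrightarrow K_I$, note the boundary-intersection condition $(\overline{\lambda}.\omega)=0$, and then descend to the $\widehat{\mathcal{M}}_{\Xi}$-level using the integral chart of \Cref{p:local-chart} together with the covering principle for $\widehat{\cZ}(m)$. The one place where you are slightly imprecise is the deduction that the closed subtorus $\mathcal{T}_{\lambda,\Xi}\hookrightarrow \mathcal{T}_\Xi$ is literally cut out by $q^{\overline{\lambda}}=1$: since the kernel of $K_I^\vee\twoheadrightarrow K_{I,\lambda}^\vee$ is the \emph{saturation} of $\Z\overline{\lambda}$, the subtorus is cut out by $q^{\overline{\lambda}_0}=1$ for a primitive generator $\overline{\lambda}_0$ of that saturation, and $\{q^{\overline{\lambda}}=1\}$ is in general a finite union of translates of it; but this is harmless, because $\mathcal{T}_\Xi(\lambda,\sigma)$ is defined in the paper as the divisor $\{q^{\overline{\lambda}}=1\}$ and the union over all $\lambda$ with the same image $\overline{\lambda}$ is what gets matched to $\widehat{\cZ}(m)$, so the discrepancy disappears. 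Your flagging of the chart-compatibility issue (matching the sections $s$ and $s_\lambda$, shrinking $K_0$, and checking the finite quotients act compatibly) is exactly the part the paper elides with the word \emph{similarly}, resolving it by appeal to flatness over $\Z_{(p)}$ and the complex statement from \cite{zemel} and \cite{bruinierzemel}, and your proposed use of the Levi decomposition restricted to $Q_\Xi\cap G_\lambda$ plus uniqueness of normal flat extensions is the correct way to carry that out.
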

\begin{proof}
Let $\lambda \in L\cap I^{\bot}$ with $Q(\lambda)=m$ and such that projection $\overline{\lambda}\in I^{\bot}/I$ is orthogonal to $\omega$. Then we have similarly a description of the mixed Shimura variety $\mathcal{M}_{\lambda,\Xi}$ associated to the Shimura datum $(G_\lambda,\D_\lambda)$ as a torus fibration and such that the following diagram is commutative

\begin{align*}
\xymatrix{\widehat{\mathcal{M}}_{\lambda,\Xi}(\sigma) \ar[rr]^{\widehat{\mathcal{T}}_{\Xi,0}(\lambda,\sigma)} \ar[d] && \mathcal{S}(\mathbb{G}_m,\mathcal{H}_0)_{/R}\ar[d]\\ 
\widehat{\mathcal{M}}_\Xi(\sigma) \ar[rr]^{\widehat{\mathcal{T}}_{\Xi,0}(\sigma)}&&\mathcal{S}(\mathbb{G}_m,\mathcal{H}_0)_{/R}.
}
\end{align*}
The left vertical map is equivariant with respect to the inclusion $\widehat{\mathcal{T}}_{\Xi}(\lambda,\sigma)\hookrightarrow \widehat{\mathcal{T}}_{\Xi}(\sigma)$ and its image only depends on $\overline{\lambda}\in I^{\bot}/I$. Since the formal completion $\widehat{\cZ}(m)$ is the union over $\lambda\in L$ of the images of the left vertical maps, we get the desired result.
\end{proof}

\section{Arithmetic intersection theory and modularity}

We recall in this section the Arakelov arithmetic intersection theory on $\mathcal{M}^{\Sigma}$ following \cite{burgos},  the modularity results of the special divisors from \cite{howardmadapusi, borcherdszagier} and its extension to complex toroidal compactification by \cite{bruinierzemel}. Then we derive a further extension to the integral model of the toroidal compactifications of GSpin Shimura varieties. 
\subsection{Modularity of special divisors} 
Let $(L,Q)$ be a maximal quadratic lattice with signature $(b,3)$ and assume that $b\geq 3$. 

Let $K\subset G(\mathbb{A}_f)$ be the compact open subgroup from \Cref{ss:gspinshimura} and let $\Sigma$ be a $K$-admissible smooth polyhedral cone decomposition. Denote by $\mathcal{M}^{\Sigma}$ the toroidal compactification of the integral model of the GSpin Shimura variety constructed in \Cref{s:integral-models}. Let $\widehat{\CH}^1(\cM^{\Sigma},\mathcal{D}_{pre})_{\Q}$ be the first Chow group of prelog forms as defined in \cite[Definition 1.15]{burgos}.
\medskip

Let $\Upsilon$ be a cusp label representative of type II. Then there is a unique one dimensional ray in the cone decomposition associated to $\Upsilon$ and we denote by abuse of notation $\mathcal{B}^\Upsilon$ the closure of the boundary divisor associated to $\Upsilon$. 

Consider now $(\Xi,\sigma)$ a toroidal stratum representative of type III such that  $\sigma$ is a $1$-dimensional inner ray in the cone decomposition $\Sigma$. Then we denote by  $\mathcal{B}
^{\Xi,\sigma}$ the closed boundary divisor in $\mathcal{M}^{\Sigma}$ associated to $(\Xi,\sigma)$.

Let $\beta\in L^{\vee}/L$ and $m\in Q(\beta)+\Z$ with $m>0$. For every toroidal stratum representative $\Upsilon$ and $(\Xi,\sigma)$, let $\mu_{\Upsilon}(\beta,m)$ and $\mu_{\Xi,\omega}(\beta,m)$ be the real numbers defined by \Cref{Eq:mult-typeII} and \Cref{Eq:mult-typeIII}, see also \cite{bruinierzemel}. Consider then the following divisor on $\mathcal{M}^{\Sigma}$: 

\begin{align}\label{eq:special-divisor}
    \mathcal{Z}^{tor}(\beta,m)=\mathcal{Z}(\beta,m)+\sum_{\Upsilon }\mu_{\Upsilon}(\mu,m)\cdot \cB^{\Upsilon}+\sum_{(\Xi,\omega)}\mu_{\Xi,\omega}(\mu,m)\cdot \cB^{\Xi,\omega},
\end{align}
where the two last sums are over toroidal stratum representatives of type II and type III respectively. 
Then by \cite{bruinierzemel}, the Cartier divisor $\mathcal{Z}^{tor}(\beta,m)$ can be endowed with a Green function $\Phi_{\beta,m}$ such that the resulting pair 
\[\widehat{\cZ}^{tor}(\beta,m)=(\cZ^{tor}(\beta,m),\Phi_{\beta,m})\]
is an element of the first Chow group of prelog forms $\widehat{\CH}^1(\cM^{\Sigma},\mathcal{D}_{pre})_{\Q}$.
For $m=0$ and $\beta=0$, we define $\widehat{\cZ}(0,0)$ to be any arithmetic divisor whose is class is the dual of the hermitian line bundle $\widehat{\mathcal{L}}=(\mathcal{L},||.||_{pet})$ endowed with the Petersson metric $||z||^2=[z,\overline{z}]$.

\medskip

Consider then the following generating series \[\Phi_L:=\sum_{\beta\in L^\vee/L}\sum_{m\in Q(\beta)+\Z}\widehat{\mathcal{Z}}^{tor}(\beta,m)q^m e_\beta\in\C[L^\vee/L][[q^{\frac{1}{D_L}}]]\otimes\widehat{\CH}^1(\cM^{\Sigma},\mathcal{D}_{pre})_{\Q},\]

where $(e_\beta)_{\beta\in L^\vee/L}$ is a basis of the $\C$-vector space $\C[L^\vee/L]$, $D_L$ is the discriminant of $L$, and $q=e^{2i\pi\tau}$, where $\tau\in\mathbb{H}$ is in the the upper-half plane.
\medskip

Let \[\rho_L:\mathrm{Mp}_2(\Z)\rightarrow \mathrm{Aut}_\C(\C[L^\vee/L])\] be the Weil representation associated to the quadratic lattice $(L,Q)$, where $\mathrm{Mp}_2(\R)$ is the metaplectic double cover if $\mathrm{Mp}_2(\R)$. For $k\in \frac{1}{2}\Z$, let $\mathrm{Mod}_{k}(\rho_L)$ denote the vector space of vector valued modular forms of weight $k$ with respect to $\rho_L$. We then have the following theorem.

\begin{theorem}\label{Theorem-modularity}
The generating series $\Phi_L$ is the Fourier development of a vector-valued modular forms of weight $1+\frac{b}{2}$ and representation $\rho_L$, i.e., \[\Phi_L\in \mathrm{Mod}_{1+\frac{b}{2}}(\rho_L)\otimes \widehat{\CH}^1(\cM^{\Sigma},\mathcal{D}_{pre})_{\Q}.\]
\end{theorem}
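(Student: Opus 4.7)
The plan is to reduce the integral modularity statement to two known modularity results — the modularity of the Bruinier--Zemel generating series on the complex toroidal compactification $M^\Sigma$ from \cite{bruinierzemel} and the integral modularity on the open part $\mathcal{M}$ from \cite{howardmadapusi} — and to combine them using a localization exact sequence for the arithmetic Chow group with prelog forms.

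First, I would restrict $\Phi_L$ along the open immersion $\mathcal{M}\hookrightarrow \mathcal{M}^\Sigma$. The boundary correction terms $\mu_\Upsilon\cdot \mathcal{B}^\Upsilon$ and $\mu_{\Xi,\omega}\cdot \mathcal{B}^{\Xi,\omega}$ vanish under this restriction, so the pullback equals $\sum_{\beta,m}\widehat{\mathcal{Z}}(\beta,m)\,q^m e_\beta$, which is a vector-valued modular form of weight $1+b/2$ and type $\rho_L$ in $\widehat{\CH}^1(\mathcal{M}, \mathcal{D}_{pre})_\Q$ by \cite{howardmadapusi}. Invoking a localization exact sequence of the form
\[\bigoplus_{D\text{ boundary}}\Q\cdot [D] \;\longrightarrow\; \widehat{\CH}^1(\mathcal{M}^\Sigma, \mathcal{D}_{pre})_\Q \;\longrightarrow\; \widehat{\CH}^1(\mathcal{M}, \mathcal{D}_{pre})_\Q \;\longrightarrow\; 0,\]
where $D$ ranges over the irreducible boundary components $\mathcal{B}^\Upsilon$ and $\mathcal{B}^{\Xi,\omega}$ (together with an analogous sequence handling the Green-function part), the modularity of $\Phi_L$ reduces to the modularity of its boundary and Green-function contributions. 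These are precisely the generating series $\sum_{\beta,m}\sum_\Upsilon \mu_\Upsilon(\beta,m)[\mathcal{B}^\Upsilon]\,q^m e_\beta$, $\sum_{\beta,m}\sum_{(\Xi,\omega)}\mu_{\Xi,\omega}(\beta,m)[\mathcal{B}^{\Xi,\omega}]\,q^m e_\beta$, and $\sum_{\beta,m}(0,\Phi_{\beta,m})\,q^m e_\beta$, whose modularity is the core content of \cite{bruinierzemel}: the coefficients $\mu_\Upsilon(\beta,m)$ and $\mu_{\Xi,\omega}(\beta,m)$ are Fourier coefficients of modular forms of weight $1+b/2$ for $\rho_L$, and likewise for the Green-function generating series.

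The main obstacle is to verify that the pair $(\cZ^{tor}(\beta,m),\Phi_{\beta,m})$ legitimately defines a class in $\widehat{\CH}^1(\cM^\Sigma,\mathcal{D}_{pre})_\Q$ in the sense of \cite{burgos}: one must check that the Bruinier--Zemel Green function $\Phi_{\beta,m}$ has log-log singularities of the prelog type along the toroidal boundary and that its residues match precisely the prescribed multiplicities $\mu_\Upsilon$ and $\mu_{\Xi,\omega}$, so that the Poincar\'e--Lelong equation is satisfied in the prelog framework. This boundary analysis is essentially present in \cite{bruinierzemel}, but must be combined with the integral extension of the special cycles from \cite{howardmadapusi} and \cite{agmp-annals} in order for the resulting pair to genuinely live in the integral arithmetic Chow group with prelog forms. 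Once this compatibility is in place, the gluing argument above yields the theorem.
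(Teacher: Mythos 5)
Your proposal takes a genuinely different route from the paper, and as written it has a gap in the gluing step.

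The paper does \emph{not} argue via a localization exact sequence. Instead it invokes the Borcherds modularity criterion in the form of \cite[Proposition 5.4]{bruinierzemel}: it suffices to show that for every weakly holomorphic $F\in M^!_{1-b/2}(\overline{\rho_L})$ with integral principal part, the linear combination $\sum_{\beta,m} c_\beta(-m)\,\widehat{\cZ}^{tor}(\beta,m)$ is a principal arithmetic divisor. The paper then identifies this combination with the arithmetic divisor of the Borcherds product $\Psi(F)$: its complex divisor on $M^\Sigma(\C)$ is computed by \cite[Theorem 5.5]{bruinierzemel}, the rationality and descent of $\Psi(F)$ to the integral model is \cite[Theorem A]{howardmadapusi}, and the passage from the divisor over $\C$ to the divisor over $\Z$ is justified by flatness of the boundary divisors (\cite[Theorem 4.1.5]{madapusitor}) and of the special divisors (\cite[Proposition 7.2.2]{howardmadapusi}, with the embedding-trick workaround at $p=2$ when $b=3$).

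The gap in your argument is that the localization sequence
\[
\bigoplus_{D}\Q\cdot[D]\longrightarrow \widehat{\CH}^1(\cM^\Sigma,\D_{pre})_\Q\longrightarrow \widehat{\CH}^1(\cM,\D_{pre})_\Q\longrightarrow 0
\]
does not come with a modular splitting, so knowing that the image of $\Phi_L$ in $\widehat{\CH}^1(\cM,\D_{pre})_\Q$ is modular and that the boundary-coefficient series $\sum \mu_\Upsilon(\beta,m)q^m e_\beta$, $\sum \mu_{\Xi,\omega}(\beta,m)q^m e_\beta$ are modular does not by itself force $\Phi_L$ itself to be modular in the larger group: you would need to exhibit a \emph{modular} lift of $\Phi_L|_{\cM}$ to $\widehat{\CH}^1(\cM^\Sigma,\D_{pre})_\Q$ and then compare, which is precisely what is being proved. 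Moreover, the decomposition $\widehat{\cZ}^{tor}(\beta,m)=\widehat{\cZ}(\beta,m)+(\text{boundary})$ is not a decomposition into classes of $\widehat{\CH}^1(\cM^\Sigma,\D_{pre})_\Q$: the Green function $\Phi_{\beta,m}$ is tailored to the corrected divisor $\cZ^{tor}(\beta,m)$, and the pair $(\cZ(\beta,m),\Phi_{\beta,m})$ (without boundary correction) does not satisfy the Poincar\'e--Lelong equation on $\cM^\Sigma$, so it is not a prelog arithmetic divisor on the compactification. The Borcherds-criterion argument in the paper sidesteps both issues by working with the full corrected combination at once.
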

\begin{proof}
Let $F\in M^{!}_{1-\frac{b}{2}}(\overline{\rho_L})$ be a weakly holomorphic modular form of weight $1-\frac{b}{2}$ with respect to the complex conjugate Weil representation of $\rho_L$ such that that $F$ has integral principal part, and let $\Psi$ be the associated Borcherds product. Then by \cite[Theorem 5.5]{bruinierzemel}, the divisor in $ \cM^{\Sigma}(\C)$ of $\Psi(F)_\C$ is equal to 
\[\sum_{\beta\in L^{\vee}/L}\sum_{m\in Q(\beta)+\Z}c_\beta(-m)\cZ^{tor}(\beta,m)(\C).\]

Since Borcherds products are defined rationally by \cite[Theorem A]{howardmadapusi}, we only need to check that the divisor of the Borcherds products has the expected form over $\Z$ and this will be automatic if all the special divisors and the boundary divisors are flat. By \cite[Theorem 4.1.5]{madapusitor}, the boundary divisors are flat and by \cite[Proposition 7.2.2]{howardmadapusi}, the special divisors are flat over $\Z[\frac12]$ and over $\Z$ if $b\geq 4$. For $b=3$, one can use the algebraic version of the Borcherds embedding trick as in \cite[Section 9.2]{howardmadapusi} to prove that no further components appear at $2$ and hence the divisor of the Borcherds product has the correct form. Hence we conclude by the criterion in \cite[Proposition 5.4]{bruinierzemel}.
\end{proof}

%%%%%%%%%%%%%%%%%%%%%%%%%%%%%%%%%%%%%%%%%%%%%%%%%%%New section%%%%%%%%%%%%%%%%%%%%%%%%%%%%%%%%%%%%%%%%%

\section{The main estimates and proof of the main theorems}

We state in this section the local and global estimates that will allow us to prove \Cref{th:K3nf} and \Cref{th:K3ff}. Then we will prove the global estimates and we postpone the proof of local estimates to the next section.

\subsection{Number field setting}

Let $X$ be K3 surface over a number field $K$. Given an embedding $\tau:K\inj \C$, let $(L,Q)$ be a maximal lattice containing the transcendental lattice of $X^\tau(\C)$. It is an even lattice of signature $(b,2)$ whose genus is independent from $\tau$. We can assume furthermore that $b\geq 3$, as the case $b\leq 2$ has already been treated, see \cite{charles-exceptional-isogenies,shankar-tang}. 

Let $\mathcal{M}$ be the integral model of the GSpin Shimura variety associated to the lattice $(L,Q)$ and, given an admissible polyhedral cone decomposition $\Sigma$, let $\mathcal{M}^{\Sigma}$ be its toroidal compactitfication as in Section 2. 
By \cite{madapusiperatate}, the K3 surface has an associated Kuga-Satake abelian variety which we can also assume to be defined over the number field $K$, up to taking a finite extension. Hence it defines a $K$-point of $\mathcal{M}^{\Sigma}$. 
By the extension property of the integral model, there exists $N\geq 1$ such that, up to taking a finite extension of $K$, we have a flat  morphism over $\Z$:
\[\mathrm{Spec}(\mathcal{O}_K\left[\frac1N\right])\rightarrow \mathcal{M},\]
and by properness, this map extends to
\[\rho:\cY=\mathrm{Spec}(\mathcal{O}_K)\rightarrow \mathcal{M}^{\Sigma}.\]
By construction, the image of this map is not contained in any special divisor. A prime over $N$ is said to be a prime of bad reduction and otherwise of good reduction.

As in \cite[Theorem 2.4]{sstt}, we will rather prove the following more general version, which is easily seen to imply \Cref{th:K3nf}.
\begin{theorem}\label{t:main_sp_end}
Let $\cY\in \mathcal{M}^{\Sigma}(\mathcal{O}_K)$ with smooth reduction outside $N$. Let $D\in \Z_{>0}$ be a fixed integer represented by $(L,Q)$ and coprime to $N$. Assume that $\cY_K\in M(K)$ is not contained in any special divisor $\cZ(m)(K)$. Then there are infinitely many places $\fP$ of $K$ of good reduction such that $\cY_{\overline{\fP}}$ lies in the image of $\cZ(Dm^2)\rightarrow \cM$ for some $m\in \Z_{>0}$ coprime to $N$.
\end{theorem}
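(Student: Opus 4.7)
The strategy is to apply \Cref{Theorem-modularity} to the arithmetic $1$-cycle $\cY$, and to contrast the resulting polynomial growth of the Fourier coefficients with upper bounds on all non-good-prime contributions to the arithmetic intersection number. Since $D$ is represented by $(L,Q)$, the integer $Dm^{2}$ always lies in $Q(0)+\Z$, so we focus on the $\beta=0$ component of $\Phi_{L}$. Pairing coefficient-wise with $\cY$ yields a scalar-valued modular form of weight $1+b/2 \geq 5/2$, whose Eisenstein component is non-trivial since $\cY_{K}$ is not contained in any special divisor, so that $\cY \cdot \widehat{\cL} > 0$. Standard asymptotics for the Eisenstein Fourier coefficients attached to the Weil representation then give a polynomial lower bound $\gg m^{b-\epsilon}$ for the $Dm^{2}$-th coefficient, along a positive proportion of $m$ coprime to $N$.

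On the geometric side, the arithmetic intersection number decomposes as
\[
\widehat{\deg}\bigl(\cY^{*}\widehat{\cZ}^{tor}(0,Dm^{2})\bigr)=I_{\infty}(m)+I_{\good}(m)+I_{\bad}(m)+I_{bd}(m),
\]
where $I_{\infty}$ is the archimedean contribution from the Green function $\Phi_{0,Dm^{2}}$ evaluated at $\cY(\C)$; $I_{\good}$ and $I_{\bad}$ are the sums of local intersection multiplicities of $\cY$ with $\cZ(Dm^{2})$ at primes of good and bad reduction respectively; and $I_{bd}$ accounts for the two boundary-correction sums $\mu_{\Upsilon}(0,Dm^{2})(\cY \cdot \cB^{\Upsilon})$ and $\mu_{\Xi,\omega}(0,Dm^{2})(\cY \cdot \cB^{\Xi,\omega})$ in \Cref{eq:special-divisor}. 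The archimedean estimate $I_{\infty}(m)=o(m^{b-\epsilon})$ is proved as in \cite{sstt} using uniform bounds on Borcherds Green functions together with equidistribution of $\cY(\C)$ in $M(\C)$, while $I_{bd}(m)$ is controlled by combining the explicit Bruinier-Zemel formulas for $\mu_{\Upsilon}$ and $\mu_{\Xi,\omega}$ with the Duke-type equidistribution inputs of \cite{duke-invetiones-hyperbolic,eskinoh}, reducing it to bounds on Fourier coefficients of theta series of positive-definite lattices of smaller rank.

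The main new ingredient is the bound on $I_{\bad}(m)$. The specialization of $\cY$ at a bad prime $\fP \mid N$ lies in some locally closed boundary stratum $\cB^{\Phi,\sigma}$, and by \Cref{ss:dia-sp-II} and \Cref{equation-typeIII} the local intersection multiplicity of $\cY$ with $\cZ(Dm^{2})$ at this point is governed by counts of vectors $\lambda$ of norm $Dm^{2}$ in a decreasing sequence of positive-definite sublattices $(L_{n},Q) \subset L$: concretely the lattice $D = J^{\perp}/J$ in type II, and the positive-definite orthogonal complement of $\omega$ inside $K_{I}^{\perp} \subset L$ in type III, each cut down by further congruence conditions dictated by the order of contact $n$ of $\cY$ with the boundary stratum. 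Given effective lower bounds on the successive minima of $L_{n}$ as $n\to\infty$, a geometry-of-numbers argument bounds the number of such vectors and yields $I_{\bad}(m)=o(m^{b-\epsilon})$.

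The key obstacle is precisely the analysis of the sequence $L_{n}$ and the growth of its successive minima; this is where the explicit formal descriptions of the special divisors near the boundary in Section~2 become indispensable, since they identify the sublattice controlling each order of contact and reduce the multiplicity estimate to a concrete lattice-point count. Once the three upper bounds above are established, comparison with the modular lower bound forces $I_{\good}(m)$ to be unbounded, and therefore $\cY_{\overline{\fP}}$ must lie in the image of $\cZ(Dm^{2})\to\cM$ at infinitely many good primes $\fP$ and values of $m$ coprime to $N$, completing the proof.
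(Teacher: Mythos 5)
Your local-multiplicity analysis (the decreasing lattices $L_n$ near the boundary, with successive-minima growth feeding a geometry-of-numbers count) matches the paper's Section 5 argument precisely. The gap is in the global structure of the argument, and it is serious enough that the proof as written does not close.

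First, your archimedean estimate is wrong in both sign and size. You assert $I_{\infty}(m)=o(m^{b-\epsilon})$, but the paper's Proposition~\ref{p:archimidean-place} shows $\sum_\tau \Phi_m(\cY^\tau)=c(m)\log m+o(m^{b/2}\log m)$ with $c(m)<0$ and $|c(m)|\asymp m^{b/2}$; at index $Dm^2$ this is $\asymp -m^{b}\log m$, which is strictly \emph{larger} in absolute value than the modular coefficient $h_{\widehat{\cZ}^{tor}(Dm^2)}(\cY)=O(m^b)$ coming from Theorem~\ref{Theorem-modularity}. This is the whole engine of the argument: the height itself is only $O(m^{b/2})$ by modularity (not $\gg m^{b/2}\log m$), so one does not obtain a useful lower bound on the finite part by taking the height alone as the ``modular lower bound.'' Rather, one subtracts the large negative archimedean contribution from the moderate total height, and the extra $\log m$ factor from the archimedean analysis is what makes $(\cY.\cZ(m))\asymp m^{b/2}\log m$ exceed the modular coefficient. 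Also, ``equidistribution of $\cY(\C)$ in $M(\C)$'' makes no sense here: $\cY(\C)$ is the finite collection of archimedean points of an $\mathcal{O}_K$-point, not an equidistributing family.

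Second, even with the archimedean estimate corrected, a per-$m$ comparison cannot conclude. Corollary~\ref{Diopbound} gives $(\cY.\cZ(m))_\fP=O(m^{b/2}\log m)$ at a single place, which is the same order as the total $(\cY.\cZ(m))$. So for a fixed $m$ you cannot rule out that one place swallows the entire contribution, and ``$I_{\good}(m)$ unbounded'' does not by itself force infinitely many distinct places. The paper instead sums over the family $S_{D,X}$ of $\asymp X^{1/2}$ values $m=Dm'^2\in[X,2X)$: the global side gives $\asymp X^{(b+1)/2}\log X$, while Propositions~\ref{p:finite-place} and the good-reduction analogue give $\sum_{m\in S_{D,X}}(\cY.\cZ(m))_\fP=o(X^{(b+1)/2}\log X)$ at each fixed place. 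Summing over only the finitely many places assumed to contribute (plus the finitely many bad primes) then produces the contradiction. This averaging over $m$ is essential and is absent from your proposal.

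A smaller issue: pairing $\Phi_L$ with $\cY$ yields a vector-valued form, not a scalar one; the $\beta=0$ component alone is not a classical modular form. And the non-containment of $\cY_K$ in special divisors is used to ensure proper intersection (so the heights decompose place by place), not to directly assert $\cY\cdot\widehat{\cL}>0$; positivity of the relevant Eisenstein term is handled through the sign of $c(m)$.
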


Let $\rho:\cY\rightarrow\mathcal{M}^{\Sigma}$ be as in the previous theorem. We first begin by the following proposition. 
\begin{proposition}\label{choice-toroidal-nf}
There exists a refinement of the cone decomposition $\Sigma$, such that the map $\rho:\cY\rightarrow \cM^{\Sigma}$ satisfies the following property: 
for any prime $\mathfrak{P}$ of bad reduction, the image of the closed point $\{\mathfrak{P}\}$ under $\rho$ is contained in a stratum which is a locally closed divisor of $\mathcal{M}^{\Sigma}$. 
\end{proposition}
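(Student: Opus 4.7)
Since $K$ has only finitely many primes of bad reduction $\fP_1,\ldots,\fP_r$, the strategy is to refine $\Sigma$ once for each $\fP_i$ and to lift the original map via the valuative criterion of properness. Recall from \Cref{eq:stratification-integral} that the strata $\cB^{\Phi,\sigma}$ of $\cM^\Sigma$ have codimension equal to $\dim \sigma$, so a stratum is a locally closed divisor exactly when $\sigma$ is a ray. The goal therefore reduces to producing a $K$-admissible smooth refinement $\Sigma'$ of $\Sigma$ such that, after the unique lift $\rho':\cY\to \cM^{\Sigma'}$ of $\rho$, the image $\rho'(\fP_i)$ sits in a $1$-dimensional cone stratum for every $i$. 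The lift exists and is unique: the proper birational morphism $\pi:\cM^{\Sigma'}\to \cM^\Sigma$ is an isomorphism over the open Shimura variety $\cM$, the generic point of $\cY$ already lands in $\cM$, and $\cM^{\Sigma'}$ is proper over $\Z$, so each localisation $\Spec(\cO_{K,\fP_i})\to \cM^\Sigma$ extends uniquely through $\pi$.

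Fix a bad prime $\fP$ with $\rho(\fP)\in\cB^{\Phi,\sigma}$ and $\dim\sigma\geq 2$. Via the isomorphism \Cref{eq:iso-comp-int} and the type II/III discussions of \Cref{s:typeII-integral} and \Cref{s:typeIII-integral}, the formal completion of $\cM^\Sigma$ at $\rho(\fP)$ is, up to a finite \'etale quotient, a twisted torus embedding of the torus $\cT_\Phi$ over $\overline{\cM}_\Phi$, with partial compactification governed by $\sigma$. Pulling back the toric coordinates $q_\alpha$ (for $\alpha\in \sigma^{\vee}\cap \Gamma_\Phi^{\vee}$) along the local map $\Spec(\cO_{K,\fP})\to\cM^\Sigma$ and taking $\fP$-adic valuations defines a monoid homomorphism $\sigma^{\vee}\cap \Gamma_\Phi^{\vee}\to \Z_{\geq 0}$, hence a lattice point $v\in \Gamma_\Phi\cap\sigma$. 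Since $\rho(\fP)$ lies in $\cB^{\Phi,\sigma}$ itself and not in a deeper stratum, every $q_\alpha$ with $\alpha$ in the relative interior of $\sigma^{\vee}$ vanishes at $\fP$, which forces $v$ into the relative interior of $\sigma$.

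Now refine $\sigma$ by inserting the ray $\R_{\geq 0}v$ and completing to a smooth rational polyhedral subdivision; propagate equivariantly along the $K$-action and to all $K$-equivalent cusp label representatives. Standard toric refinement arguments (see \cite[II.\S 1]{amrt} and \cite[\S 4.1]{madapusitor}) guarantee that the result $\Sigma'$ remains $K$-admissible, smooth and complete. The familiar fact that a map from a DVR to a toric variety sends the closed point to the orbit indexed by the smallest cone whose relative interior contains the valuation vector then shows that $\rho'(\fP)$ lands in the stratum indexed by the ray $\R_{\geq 0}v$, a locally closed divisor. Iterate over $\fP_1,\ldots,\fP_r$: each further refinement only subdivides cones and therefore never moves a previously resolved $\rho'(\fP_j)$ out of its $1$-dimensional stratum.

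\textbf{Main obstacle.} The principal technical point is to ensure that the refinement can be performed compatibly with the $K$-admissibility conditions while simultaneously inserting the desired rays at all bad primes, and that the local toric picture at $\rho(\fP)$ is rigid enough for the vector $v$ to be intrinsically defined (independently of the chart). Both rely crucially on the \'etale-local toric structure in \Cref{eq:iso-comp-int} and on the flatness of boundary strata from \cite[Theorem 4.1.5]{madapusitor}, after which the argument reduces to classical equivariant refinements of fans.
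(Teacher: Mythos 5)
Your proposal is essentially the same argument as the paper's: identify the valuation vector $v$ (the paper calls it $\omega$) in $\Gamma_\Phi \cap \sigma$ from the $\fP$-adic valuations of the pulled-back toric coordinates, insert $\R_{\geq 0}v$ as a new ray in a $K$-admissible smooth refinement, and iterate over the finitely many bad primes. Your treatment is slightly more explicit on two small points — the observation that $v$ must lie in the relative interior of $\sigma$ because the closed point lands in the locally closed stratum (not a deeper one), and the uniqueness of the lift $\rho'$ via the valuative criterion — but the core idea and the structure of the argument coincide with the paper's proof.
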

\begin{proof}
Let $s_\mathfrak{P}\in \cY$ be the closed point $\mathfrak{P}$ of $\cY$. By \Cref{eq:stratification-integral}, the image of $s_\mathfrak{P}$ lies in a stratum indexed either by either a type II boundary component $\Upsilon$ or a type III $(\Xi,\sigma)$ toroidal stratum representative. In the type II case, the boundary is already a divisor and there is nothing to prove. In the type III case, let $r$ be the dimension of the cone $\sigma$. Then we get a morphism: 
\begin{align}\label{map-local}
    \mathrm{Spf}(W(\overline{\mathbb{F}}_p))\rightarrow \widehat{\mathcal{M}}^{\Sigma},
\end{align}
where $\widehat{\mathcal{M}}^{\Sigma}$ is the formal completion along the boundary component defined by $(\Xi,\sigma)$. By a similar analysis to \Cref{s:typeIII-integral}, we have an \'etale cover of formal Deligne--Mumford stacks: 
\[\widehat{\mathcal{T}}_\Xi(\sigma)\rightarrow \widehat{\mathcal{M}}^{\Sigma}.\]
Hence the map \Cref{map-local} lifts to a morphism
\begin{align}\label{eq:tangent-map}
    \mathrm{Spf}(W(\overline{\mathbb{F}}_p))\rightarrow \widehat{\mathcal{T}}_\Xi(\sigma),
\end{align}
where \[\widehat{\mathcal{T}}_\Xi(\sigma)=\mathrm{Spf}\left(\Z_p[q^{\alpha}|(\alpha,\sigma)=0]\otimes_{\Z_p}\Z_p[[q^{\alpha}|(\alpha,\sigma)>0]] \right).\] Hence this corresponds to a map 
\[\Z_p[[q^{\alpha}|(\alpha,C)>0]]\otimes \Z_p[q^{\alpha}|(\alpha,C)=0]\rightarrow W(\overline{\mathbb{F}}_p).\] 

The linear form on $\Gamma_\Xi^{\vee}$ given by sending an element $\alpha$ to the $p$-adic valuation of the image of $q^{\alpha}$ under the above map is represented by an element $\omega\in \Gamma_\Xi$ which satisfies $(\omega.\alpha)>0$ whenever $(\alpha.\sigma)>0$, hence $\omega$ is in $\sigma$. The cocharacter defined by $\omega$ is in fact tangent to the map given in \Cref{eq:tangent-map}. Let $\sigma'$ in $\sigma$ be the ray defined by $\omega$ and let $\Sigma'$ be the new cone decomposition obtained by refining $\Sigma$ and which contains $\sigma'$ as a one dimensional ray. Then $\mathcal{M}^{\Sigma'}$ is a blow-up of $\mathcal{M}^{\Sigma}$ and by the preceding discussion, the point $s_\mathfrak{P}$ belongs to the boundary divisor parameterized by $(\Xi,\sigma')$. Since there are only finitely many primes of bad reduction, then by repeating this procedure finitely many times, we get the desired cone decomposition. 
\end{proof}
\medskip

We will work from now on with the toroidal compactification given by the above proposition.  For $m\geq 1$ an integer, let $\cZ(m)$ be the closed special divisor $\cZ(0,m)\hookrightarrow \cM^{\Sigma}$ and $\widehat{\mathcal{Z}}^{tor}(m)$ the arithmetic divisor associated to $\mathcal{Z}^{tor}(m)$ by \Cref{eq:special-divisor}. 
The pullback via the period map $\rho:\mathcal{Y}\rightarrow \mathcal{M}^{\Sigma}$ allows us to define the height $h_{\widehat{\mathcal{Z}}^{tor}(m)}(\cY)$ of $\cY$ with respect to the arithmetic divisor $\widehat{\mathcal{Z}}^{tor}(m)$ as its image under the composition: 
\begin{align*}
    \widehat{\CH}^1(\cM^{\Sigma},\mathcal{D}_{pre})_{\Q}\rightarrow &\widehat{\CH}^1(\cY,\mathcal{D}_{pre})_{\Q}\xrightarrow{\widehat{\deg}} \R,\\
    \widehat{\mathcal{Z}}^{tor}(m)&\longrightarrow h_{\widehat{\mathcal{Z}}^{tor}(m)}(\cY).
\end{align*}
By choice of the lattice $(L,Q)$, the arithmetic curve $\cY$ intersects properly the divisors $\cZ(m)$, $\cB^{\Xi,\omega}$ and $\cB^\Upsilon$ for every $\Upsilon$ and $(\Xi,\omega)$. Hence we have 
\begin{align}\label{intersectionformula2}
h_{\widehat{\mathcal{Z}}^{tor}(m)}(\mathcal{Y})=\sum_{\tau:K\hookrightarrow\C}\Phi_{m}(\cY^\tau)+\sum_{\mathfrak{P}}(\cY.\cZ^{tor}(m))_{\fP}\log|\mathcal{O}_{K}/\mathfrak{P}|,
\end{align}
where for $\tau:K\hookrightarrow \C$, we use $\cY^\tau$ to denote the point in $M(\C)$ induced by \[\Spec(\C)\xrightarrow{\tau} \Spec({\cO_K})=\cY\rightarrow \cM^{\Sigma}.\] 

We have 
\begin{align}\label{Eq:decomposition}
(\cY.\cZ^{tor}(m))_{\fP}=(\cY.\cZ(m))_{\fP}+\sum_{\Upsilon}\mu_{\Upsilon}(m)(\cY.\cB^\Upsilon)_{\fP}+\sum_{(\Xi,\omega)}\mu_{\Xi,\omega}(m)\cdot (\cY.\cB^{\Xi,\omega})_{\fP}.  
\end{align}

Let us denote by $\cO_{\cY\times_{\cM^\Sigma}\cZ(m),v}$ the \'etale local ring of $\cY\times_{\cM}\cZ(m)$ at $v$, then 
\begin{align}\label{int_formula_finite1}
(\cY.\cZ(m))_{\fP}=\sum_{v\in\cY\times_{\cM}\mathcal{Z}(m)(\overline{\mathbb{F}}_\mathfrak{P})}\length(\cO_{\cY\times_{\cM^\Sigma}\cZ(m),v}),
\end{align}
where $\bF_\fP$ denotes the residue field of $\fP$. 

Let \[(\cY.\cZ(m))=\sum_{\mathfrak{P}}(\cY.\cZ(m))_{\fP}\log|\mathcal{O}_{K}/\mathfrak{P}|.\]
The first new contribution of this paper is to prove the following estimate which results from  Borcherds modularity and ad hoc bounds on the multiplicities $\mu_{\Upsilon}(m)$ and $\mu_{\Xi,\omega}(m)$.
\begin{proposition}\label{p:globalnf}
As $m\rightarrow \infty$, we have \[(\cY.\cZ(m))+\sum_{\tau:K\hookrightarrow\C}\Phi_{m}(\cY^\tau)=O(m^\frac{b}{2}).\]
\end{proposition}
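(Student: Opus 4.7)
The plan is to combine the arithmetic height decomposition in \Cref{intersectionformula2} and \Cref{Eq:decomposition} with the modularity theorem \Cref{Theorem-modularity}, then control separately the height contribution and the boundary correction.

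First I would rearrange the two displayed identities to isolate the quantity of interest:
\[
(\cY.\cZ(m)) + \sum_{\tau: K \hookrightarrow \C} \Phi_m(\cY^\tau) = h_{\widehat{\cZ}^{tor}(m)}(\cY) - R(m),
\]
where
\[
R(m) := \sum_{\fP}\Bigl(\sum_{\Upsilon} \mu_{\Upsilon}(m)\, (\cY.\cB^{\Upsilon})_{\fP} + \sum_{(\Xi,\omega)} \mu_{\Xi,\omega}(m)\,(\cY.\cB^{\Xi,\omega})_{\fP}\Bigr) \log |\cO_K/\fP|.
\]
It then suffices to prove $h_{\widehat{\cZ}^{tor}(m)}(\cY) = O(m^{b/2})$ and $R(m) = O(m^{b/2})$.

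For the height term, \Cref{Theorem-modularity} says that the class $\widehat{\cZ}^{tor}(\beta,m) \in \widehat{\CH}^1(\cM^{\Sigma},\mathcal{D}_{pre})_\Q$, assembled over $(\beta,m)$, is the Fourier expansion of a vector-valued modular form of weight $1 + \tfrac{b}{2}$ and representation $\rho_L$. Applying the $\C$-linear composition $\widehat{\CH}^1(\cM^{\Sigma},\mathcal{D}_{pre})_\Q \to \widehat{\CH}^1(\cY,\mathcal{D}_{pre})_\Q \xrightarrow{\widehat{\deg}} \R$ coefficient-wise produces a scalar-valued modular form of the same weight whose Fourier coefficients are $h_{\widehat{\cZ}^{tor}(\beta,m)}(\cY)$. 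The standard Hecke bound for Fourier coefficients of a holomorphic modular form of weight $k = 1 + \tfrac{b}{2}$ then yields $h_{\widehat{\cZ}^{tor}(m)}(\cY) = O(m^{k-1}) = O(m^{b/2})$, the dominant contribution coming from the Eisenstein part.

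The main obstacle is the bound on $R(m)$. The key observation is that the sums over $\Upsilon$, $(\Xi,\omega)$, and bad primes $\fP$ are all finite: $\Sigma$ is a finite cone decomposition (refined if necessary via \Cref{choice-toroidal-nf}), and there are only finitely many primes of bad reduction. Since the intersection numbers $(\cY.\cB^{\Upsilon})_{\fP}$ and $(\cY.\cB^{\Xi,\omega})_{\fP}$ do not depend on $m$, it is enough to prove pointwise that $\mu_{\Upsilon}(m), \mu_{\Xi,\omega}(m) = O(m^{b/2})$. These boundary multiplicities are given by explicit formulas of Bruinier--Zemel (\Cref{Eq:mult-typeII} and \Cref{Eq:mult-typeIII}) and can be interpreted as Fourier coefficients of certain weight $1 + \tfrac{b}{2}$ Eisenstein-type generating series; they are thus bounded by $O(m^{b/2})$ either via a direct divisor-sum estimate or by the same Hecke bound. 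Combining these estimates yields the claimed bound on the left-hand side.
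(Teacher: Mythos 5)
Your decomposition of the problem matches the paper exactly: isolate $h_{\widehat{\cZ}^{tor}(m)}(\cY)$, bound it via modularity, and reduce the remaining boundary correction $R(m)$ to pointwise bounds on $\mu_\Upsilon(m)$ and $\mu_{\Xi,\omega}(m)$, using finiteness of the cone decomposition and of the set of bad primes to make the $m$-independent intersection numbers $(\cY.\cB^{\Upsilon})_\fP$ and $(\cY.\cB^{\Xi,\omega})_\fP$ harmless. The treatment of the height term is correct and identical to the paper's. The issue is the final step, where you assert the multiplicity bounds without a real argument.

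For the type II multiplicities, your proposed ``divisor-sum estimate'' does work: by \Cref{Eq:mult-typeII} one has $\mu_\Upsilon(m) = \frac{2}{b-2}\, m\, c(D,0,m)$, and since $D$ is positive definite of rank $b-2$, standard lattice-point counting gives $c(D,0,m) \ll_\epsilon m^{b/2-2+\epsilon}$, hence $\mu_\Upsilon(m) \ll_\epsilon m^{b/2-1+\epsilon}$. (Note the generating series $q\frac{d}{dq}\Theta_D$ is only \emph{quasi}-modular of weight $1+b/2$, so a naive Hecke bound does not literally apply; the direct count is what you want.) But the type III case is genuinely different, and this is where your proposal has a gap. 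By \Cref{Eq:mult-typeIII}, $\mu_{\Xi,\omega}(m)$ is a constant times the value $\Phi^K_m(\omega/\sqrt{N})$ of a Green function on a \emph{Lorentzian} lattice, not a Fourier coefficient of a holomorphic Eisenstein series, and it is not ``thus bounded'' by a Hecke-type estimate. The generating series $\sum_m \Phi^K_m(v)q^m$ is not a modular form; the $\mu$'s are precisely the corrections needed to make the full $\CH^1$-valued generating series modular, and that does not imply each correction series is individually modular. The paper's actual proof of $\mu_{\Xi,\omega}(m) \ll_\epsilon m^{(b-1)/2+\epsilon}$ (\Cref{p:growth-multi}(2)) uses Bruinier's explicit hypergeometric expansion of $\Phi^K_m$, a uniform bound on $F(\frac{b-1}{2},1,1+\frac{b}{2};z)$ on the closed unit disc, and an equidistribution/lattice-point-count argument (\Cref{p:quadratic-form}) on the Lorentzian quadric via Duke and Eskin--Oh. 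None of this is captured by ``a direct divisor-sum estimate or the Hecke bound,'' so as written your proposal does not establish the required bound in the type III case.
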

As a corollary, we get the following bound, which is referred to as the diophantine bound in \cite[Equation (5.2)]{sstt}.
\begin{corollaire}\label{Diopbound}
For any finite place $\mathfrak{P}$, we have the following bound.

\[(\cY.\cZ(m))_\fP=O(m^{\frac{b}{2}}\log m),\quad \Phi_m(\cY^\tau)=O(m^{\frac{b}{2}}\log m).\]
\end{corollaire}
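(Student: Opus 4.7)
The corollary deduces individual estimates from the single global bound in \Cref{p:globalnf}, following the standard pattern of \cite[Equation (5.2)]{sstt}. Two ingredients are needed in addition to \Cref{p:globalnf}: positivity of the finite-place contributions, and an archimedean lower bound for the Green functions. For the first, the assumption that $\cY_K$ is not contained in any special divisor implies that $\cY$ meets $\cZ(m)$ properly at each finite place, so each local intersection number $(\cY.\cZ(m))_{\fP'}$ is, by \Cref{int_formula_finite1}, a non-negative sum of lengths of local rings.

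The second ingredient, and the analytic heart of the matter, is a lower bound
\[ \Phi_m(\cY^\tau) \geq -C_\tau\, m^{b/2} \log m \]
for every archimedean place $\tau$, with $C_\tau$ independent of $m$. This is where the logarithm in the final estimate originates. Such a bound follows from the explicit description of $\Phi_m$ as a (regularized) Borcherds theta lift of a harmonic Maass form of weight $1-b/2$: at the fixed point $\cY^\tau$, which lies outside the support of $\cZ(m)$, the lift can be estimated in terms of the number of lattice vectors of norm $m$ in $L$ (of size $O(m^{b/2})$) together with a logarithmic contribution coming from the Bessel-function factors in its Fourier expansion. This is the archimedean estimate whose proof is carried out in Section 4 of the paper.

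Granting these two inputs, the corollary is a bookkeeping exercise. For the finite-place bound, fix $\fP$; by non-negativity of the other local contributions,
\[ (\cY.\cZ(m))_{\fP}\log|\cO_K/\fP| \leq (\cY.\cZ(m)), \]
and then \Cref{p:globalnf} combined with the archimedean lower bound yields
\[ (\cY.\cZ(m)) = O(m^{b/2}) - \sum_\tau \Phi_m(\cY^\tau) \leq O(m^{b/2}) + O(m^{b/2}\log m) = O(m^{b/2}\log m). \]
For the archimedean bound, fix $\tau$; using non-negativity of all the finite intersection numbers and the lower bound on $\Phi_m(\cY^{\tau'})$ for $\tau' \neq \tau$,
\[ \Phi_m(\cY^\tau) \leq O(m^{b/2}) - (\cY.\cZ(m)) - \sum_{\tau' \neq \tau} \Phi_m(\cY^{\tau'}) = O(m^{b/2}\log m), \]
and combining with the lower bound gives $|\Phi_m(\cY^\tau)| = O(m^{b/2}\log m)$. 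The main obstacle is thus not the deduction itself but the analytic archimedean lower bound on $\Phi_m$, which relies on the explicit theta-lift description of the Green function and asymptotic estimates for lattice-point counts.
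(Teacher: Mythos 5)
Your deduction is correct and is essentially the paper's own (the paper states the corollary without proof, deferring to the diophantine-bound argument of \cite[Equation (5.2)]{sstt}): non-negativity of each finite local intersection number in \Cref{int_formula_finite1}, together with a pointwise archimedean lower bound $\Phi_m(\cY^\tau)\gg -m^{b/2}\log m$ valid for \emph{all} $m$, turns \Cref{p:globalnf} into the two stated individual bounds exactly as you write. The one small inaccuracy is attributional: that all-$m$ lower bound is imported from \cite{sstt} rather than established in Section 4 of this paper, whose \Cref{p:archimidean-place} is only an asymptotic for $m$ outside a zero-density bad set and would not suffice on its own.
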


For our next estimate, we recall the notion of asymptotic density from \cite{sstt}: for a subset $S\subset \Z_{>0}$, the \emph{logarithmic asymptotic density} of $S$ is defined to be \[\displaystyle \limsup_{X\rightarrow\infty}\frac{\log |S_X|}{\log X},\] where $S_X:=\{a\in S \mid X\leq a < 2X\}$.
\medskip

 Recall from Theorems 5.7 and 6.1 in \cite{sstt} that we have the following estimate: 
\begin{proposition}\label{p:archimidean-place}
There exists a subset $S_{bad}\subset \Z_{>0}$ of zero logarithmic asymptotic density such that 
\[\sum_{\tau:K\hookrightarrow \C}\Phi_m(\cY^{\tau})=c(m)\log(m)+o(m^{\frac{b}{2}}\log(m)),\]
where $-c(m)\asymp m^{\frac{b}{2}}$ and is defined in \cite[Section 3.3]{sstt}.
\end{proposition}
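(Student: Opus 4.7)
The plan is to reduce the estimate to the corresponding statement in \cite{sstt}, since the archimedean Green function $\Phi_m$ attached to the divisor $\cZ^{tor}(m)$ agrees on the interior $M(\C)$ with the Bruinier--Borcherds Green function already studied there, and the point $\cY^\tau$ lies in $M(\C)$ by the hypothesis that $\cY_K$ avoids every special divisor. First I would recall the explicit Borcherds--Bruinier integral expression: for each archimedean place $\tau$, the Green function $\Phi_m(\cY^\tau)$ admits a presentation as a regularized theta lift, equivalently as a convergent sum
\[
\Phi_m(\cY^\tau)=\sum_{\lambda\in L_g,\,Q(\lambda)=m}\xi\bigl(R(\lambda,\cY^\tau)\bigr)
\]
for a specific kernel function $\xi$, where $R(\lambda,x)$ measures the distance in $\cD$ from $x$ to the sub-Hermitian domain $\cD(\lambda)$. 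The kernel has a logarithmic singularity along $R=0$ and decays exponentially otherwise.

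Next, I would isolate the main term. Following \cite[\S 3.3]{sstt}, the contribution of the logarithmic singularity of $\xi$, integrated against the distribution of lattice vectors of norm $m$, produces the main term $c(m)\log m$, where $c(m)$ is the product of $\log m$--free coefficients computed from an Eisenstein series / Siegel--Weil type formula for the representation numbers of $(L,Q)$. Standard asymptotics for representation numbers by an indefinite quadratic form of signature $(b,2)$ yield $-c(m)\asymp m^{b/2}$, confirming the quoted size. The non-singular part of $\xi$ contributes an integral that, after using the Duke--Eskin--Oh equidistribution of lattice points on a hyperboloid \cite{duke-invetiones-hyperbolic,eskinoh}, averages against a smooth measure and gives a contribution of size $O(m^{b/2})$, absorbed into the error term.

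The refined control needed to go from $O(m^{b/2}\log m)$ to $o(m^{b/2}\log m)$ outside an exceptional set $S_{bad}$ is the heart of the argument and the step I expect to be hardest. I would let $S_{bad}$ be the set of $m$ at which either (i) the representation numbers $r_Q(m)$ exceed their Siegel--Weil expectation by a factor $\gg(\log m)^{\epsilon}$, or (ii) there exists an unusually short vector $\lambda$ with $Q(\lambda)=m$ for which $R(\lambda,\cY^\tau)$ is anomalously small, producing an oversized logarithmic contribution. Counting arguments using the Eisenstein bound for $r_Q(m)$ and the transversality of $\cY^\tau$ to the special divisors show that each of these events occurs on a set of zero logarithmic asymptotic density. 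Outside $S_{bad}$, an effective version of the Duke--Eskin--Oh equidistribution provides a quantitative rate, which combined with the singularity analysis upgrades the error to $o(m^{b/2}\log m)$.

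Finally, I would assemble the pieces: the main term $c(m)\log m$ is independent of $\tau$ up to an equivalent archimedean normalization at each place, and summing over the finitely many embeddings $\tau:K\hookrightarrow\C$ only changes $c(m)$ by a finite constant, which is absorbed into the definition from \cite[\S 3.3]{sstt}. The main obstacle is the effective equidistribution with a rate sharp enough to beat the $\log m$ factor, which in \cite{sstt} is handled by a careful trade-off between a short-vector exceptional set and the spectral gap underlying Duke's theorem; the same trade-off applies verbatim here because it only uses the Archimedean geometry of $\cD$ and the quadratic lattice $(L,Q)$, not any hypothesis on the reduction of $\cY$.
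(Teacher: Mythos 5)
Your proposal is correct and takes essentially the same approach as the paper: the paper does not reprove this estimate but simply recalls it from Theorems 5.7 and 6.1 of \cite{sstt}, which is legitimate precisely because of the reduction you identify at the outset --- the Green function attached to $\cZ^{tor}(m)$ agrees on the interior $M(\C)$ with the one studied in \cite{sstt}, and each $\cY^\tau$ lies in the interior. The rest of your sketch (singular/smooth decomposition of the theta kernel, Eisenstein main term with $-c(m)\asymp m^{b/2}$, Duke--Eskin--Oh equidistribution, and an exceptional set of zero logarithmic density controlling anomalously short vectors) is a faithful summary of how \cite{sstt} establishes the estimate, so no new argument is needed here.
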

For a prime $\mathfrak{P}$ of good reduction, i.e., where the intersection of $\cY$ and $\cZ(m)$ above $\mathfrak{P}$ is supported in $\cM$, we have the following estimate which follows easily from \cite[Theorem 7.1]{sstt}.
\begin{proposition}
Let $\mathfrak{P}$ be a finite place of good reduction. Let $D\in \Z_{\geq 1}$ coprime to $N$. For $X\in \Z_{>0}$, let $S_{D,X}$ denote the set \[\{m\in \Z_{>0}\mid X \leq m<2X,\, \frac{m}{D}\in \Z \cap (\Q^\times)^2,\, (m,N)=1\}.\] Then we have
\[\sum_{m\in S_{D,X}}(\cY . \cZ(m))_\fP=o(X^{\frac{b+1}{2}}\log X).\]
\end{proposition}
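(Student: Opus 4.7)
The plan is to reduce the proposition directly to Theorem 7.1 of \cite{sstt}. The crucial observation is that, by the definition of good reduction adopted just before the statement, $\fP$ being of good reduction means that the set-theoretic intersection $\cY \times_{\cM^{\Sigma}} \cZ(m)$ above $\fP$ is supported entirely inside the open Shimura variety $\cM \subset \cM^{\Sigma}$. Hence the local multiplicities in \Cref{int_formula_finite1} are computed intrinsically in $\cM$, and the boundary strata of $\cM^\Sigma$ (together with the correction terms $\mu_\Upsilon(m)$ and $\mu_{\Xi,\omega}(m)$ in \Cref{Eq:decomposition}) contribute nothing at $\fP$.

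Once the computation is confined to $\cM$, the strategy is exactly the one implemented in \cite[Theorem 7.1]{sstt}. At each $\overline{\F}_\fP$-point $v \in \cY \times_\cM \cZ(m)$, the local intersection length is bounded, via Serre--Tate and crystalline deformation theory applied to the Kuga--Satake abelian scheme $\cA$ over $\cM$, by a count of special endomorphisms of norm $m$ lying in a decreasing sequence of positive definite $\Z$-lattices inside the lattice of special endomorphisms of $\cA_{\overline{\cY}_\fP}$. This ambient lattice is positive definite of rank at most $b+1$, with the maximal rank attained only in the supersingular case.

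Summing over $m \in S_{D,X}$ thus reduces the desired estimate to a count of vectors of norm $Dn^2$ in a positive definite lattice of rank at most $b+1$, with $n$ ranging over an interval of length $O(\sqrt{X/D})$. A geometry-of-numbers estimate for such forms, combined with the sparsity of the set $\{Dn^2\}_n$ inside $[X,2X)$, yields the logarithmic savings needed to pass from the per-$m$ Diophantine bound of \Cref{Diopbound} (which only produces $O(X^{(b+1)/2}\log X)$ after summation) to the strict little-$o$ estimate claimed. The only new point relative to \cite{sstt} is verifying that the toroidal boundary plays no role at a good reduction place, which is immediate from the hypothesis; from that step on, the argument proceeds exactly as in \emph{loc.\ cit.}, so there is no substantial obstacle.
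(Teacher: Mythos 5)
Your proposal matches the paper's own treatment, which is simply to observe that $\fP$ being a place of good reduction means the intersection $\cY\times_{\cM^\Sigma}\cZ(m)$ above $\fP$ lies in the open Shimura variety $\cM$, so the boundary divisors and the correction terms $\mu_\Upsilon(m)$, $\mu_{\Xi,\omega}(m)$ contribute nothing, and the estimate is then exactly \cite[Theorem 7.1]{sstt}. One small correction to your supplementary sketch of the \cite{sstt} argument: the lattice of special endomorphisms of $\cA_{\overline{\cY}_\fP}$ has rank at most $b$ at a non-supersingular point and rank $b+2$ (the full rank of $L$) at a supersingular point, not $b+1$; this detail lives entirely in \cite{sstt} and does not affect the validity of the reduction you carry out.
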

Finally, for a prime $\mathfrak{P}$ of bad reduction, we prove the following proposition which is the second new contribution of this paper.

\begin{proposition}\label{p:finite-place}
Let $\mathfrak{P}$ a finite place of bad reduction. Let $D\in \Z_{\geq 1}$ coprime to $N$. For $X\in \Z_{>0}$, let $S_{D,X}$ be the set defined in the previous proposition. Then we have
\[\sum_{m\in S_{D,X}}(\cY . \cZ(m))_\fP=o(X^{\frac{b+1}{2}}\log X).\]
\end{proposition}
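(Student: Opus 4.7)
The strategy leverages the explicit descriptions of special divisors in formal completions along boundary strata (Propositions~\ref{ss:dia-sp-II} and \ref{equation-typeIII}) to rewrite the local intersection number as a sum over lattice points, and then applies a geometry-of-numbers argument on a decreasing sequence of positive definite lattices.

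By \Cref{choice-toroidal-nf}, we may assume the image $\rho(\mathfrak{P})$ lies in a codimension-one boundary stratum $\mathcal{B}^{\Phi,\sigma}$ of $\mathcal{M}^\Sigma$, where $\Phi$ is a cusp label of type II, or a pair $(\Xi,\sigma)$ of type III with $\sigma$ a one-dimensional inner ray. The arithmetic section $\rho$ lifts via the étale chart of \Cref{p:local-chart} (or its integral analogue from Section~\ref{s:typeIII-integral}) to a formal section
\[
s\colon \mathrm{Spf}(\mathcal{O}_{K,\mathfrak{P}}^{\wedge}) \;\longrightarrow\; \widehat{\mathcal{M}}_\Phi(\sigma).
\]
Propositions~\ref{ss:dia-sp-II} and \ref{equation-typeIII} present the formal completion of $\mathcal{Z}(m)$ near $\rho(\mathfrak{P})$ as a finite union of principal divisors $Z_\lambda$ indexed by $\lambda$ in a positive definite sublattice $\Lambda$ (of rank $r=b-2$ in the type II case via $D=J^{\perp}/J$; of rank $r=b-1$ in the type III case via $\omega^{\perp}\cap K_I$) satisfying $Q(\lambda)=m$. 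Each $Z_\lambda$ is cut out by a single local equation $f_\lambda$: a linear combination of formal parameters dictated by the formal group law on $D\otimes \mathcal{E}$ in type II, and of the form $q^{\bar\lambda}-1$ in type III. Setting $m_\lambda:=v_{\mathfrak{P}}(s^{*}f_\lambda)$, all values $m_\lambda$ are finite because the hypothesis $\mathcal{Y}_K\not\subset \mathcal{Z}(m)(K)$ prevents $\mathcal{Y}$ from being contained in any $Z_\lambda$, and
\[
(\mathcal{Y} . \mathcal{Z}(m))_{\mathfrak{P}} \;=\; \sum_{\substack{\lambda\in\Lambda \\ Q(\lambda)=m}} m_\lambda.
\]

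Define the decreasing filtration $L_n := \{\lambda\in\Lambda : m_\lambda\geq n\}$. The additive dependence of $f_\lambda$ on $\lambda$ (via the formal group law in type II, via the character group in type III) ensures each $L_n$ is a sublattice. The heart of the argument is a successive-minima estimate for $(L_n,Q)$: expanding $s^{*}f_\lambda$ as a power series in a uniformizer $\pi=\pi_{\mathfrak{P}}$, its coefficients become linear functionals in $\lambda$ obtained from the $p$-adic logarithm of $s^{*}q^{\bar\lambda}$ in type III, and from the formal logarithm of the Kuga--Sato abelian scheme in type II. The condition $m_\lambda\geq n$ then translates into a system of linear congruence conditions modulo $\pi^n$, whose nondegeneracy is ensured by the generic non-containment hypothesis. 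This yields a quantitative bound of the form $\mu_1(L_n)^2 \gtrsim q^{cn/r}$ for some constant $c>0$, with $q=|\mathbb{F}_{\mathfrak{P}}|$, equivalently $\mathrm{covol}(L_n)\gtrsim q^{cn}$ up to a bounded multiplicative loss at finitely many initial~$n$.

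Interchanging summation,
\[
\sum_{m\in S_{D,X}} (\mathcal{Y} . \mathcal{Z}(m))_{\mathfrak{P}} \;=\; \sum_{n\geq 1}\bigl|\{\lambda\in L_n : Q(\lambda)\in S_{D,X}\}\bigr|.
\]
The successive-minima bound restricts the sum to $n=O(\log X)$, and a standard ellipsoid lattice-point count gives
\[
|\{\lambda\in L_n : Q(\lambda)\leq 2X\}| \;=\; O\!\left(\tfrac{X^{r/2}}{\mathrm{covol}(L_n)}\right) + O(1) \;=\; O\!\left(X^{r/2}q^{-cn}\right) + O(1).
\]
Summing the resulting geometric series yields $O(X^{r/2}) = O(X^{(b-1)/2}) = o(X^{(b+1)/2}\log X)$, as desired. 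The main obstacle is the successive-minima estimate in the previous paragraph: the new ingredient beyond \cite{sstt} and \cite{maulik-shankar-tang-K3} is a detailed $\mathfrak{P}$-adic analysis of the local equations $f_\lambda$ at a bad reduction point, carried out case-by-case using the formal group law structure in type II and the toric character structure in type III, combined with the nondegeneracy input from the generic non-containment hypothesis.
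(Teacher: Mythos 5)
Your overall architecture matches the paper's: lift $\rho$ to the formal completion via the étale chart, identify the local equations $f_\lambda$ of the branches of $\mathcal{Z}(m)$ (formal group equations in type II, toric characters $q^{\bar\lambda}-1$ in type III), set $L_n=\{\lambda : v_\mathfrak{P}(s^*f_\lambda)\ge n\}$, show $L_n$ is a sublattice with $pL_n\subseteq L_{n+1}$ and $\bigcap_n L_n=\{0\}$, and then sum lattice-point counts over $n$. Up to that point you are in step with the paper.

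However, the crux of your argument — the claimed successive-minima bound $\mu_1(L_n)^2\gtrsim q^{cn/r}$ obtained by ``a detailed $\mathfrak{P}$-adic analysis of the local equations'' — is both incorrect in general and not what the paper does. Write $\phi:\Lambda\to \mathcal{O}_{K_\mathfrak{P}}$ for the ($\mathbb{Z}$-linear, for large $n$) map $\lambda\mapsto\log(s^*q^{\bar\lambda})$ whose $\pi$-adic sublevel sets are the $L_n$. Its $\mathbb{Q}_p$-linear extension $\phi\otimes\mathbb{Q}_p:\Lambda\otimes\mathbb{Q}_p\to K_\mathfrak{P}$ can have a large kernel whenever $\mathrm{rk}\,\Lambda>[K_\mathfrak{P}:\mathbb{Q}_p]$ (which happens routinely since $\mathrm{rk}\,\Lambda=b-2$ or $b-1$ while $[K_\mathfrak{P}:\mathbb{Q}_p]$ is fixed). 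Integer vectors $\lambda$ that $p$-adically approximate this kernel well have $v_\mathfrak{P}(\phi(\lambda))$ large while $Q(\lambda)$ grows only polynomially — a Diophantine approximation phenomenon. So the condition $m_\lambda\ge n$ does not translate into a system of congruences that pushes out the first minimum exponentially; $\mu_1(L_n)$ can genuinely grow as slowly as a power of $n$. The non-containment hypothesis only forces $\bigcap_n L_n=\{0\}$, not any quantitative rate.

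The paper's actual proof of part (3) of \Cref{p:lattices-typeII} (and its type III analogue \Cref{p:lattice-decayIII}) does not attempt any such local analysis: it invokes the \emph{global} diophantine height bound \Cref{Diopbound}, which is a corollary of the modularity of the arithmetic generating series (\Cref{p:globalnf}/\Cref{Theorem-modularity}). Taking $w_0\in L_n$ with $Q(w_0)=\mu_1(L_n)^2=:m_0$, one has $n\le (\mathcal{Y}.\mathcal{Z}(m_0))_\mathfrak{P}\ll_\epsilon m_0^{(b+\epsilon)/2}$, yielding only the polynomial bound $\mu_1(L_n)\gg_\epsilon n^{1/(b+\epsilon)}$, and then $a_i(L_n)\gg_\epsilon n^{i/(b+\epsilon)}$. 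The final summation then must (and does, in \Cref{p:lattice-estimate}) use the full geometry-of-numbers count
\[
\sum_{m\in S_{D,X}}|\{\lambda\in L_n : Q(\lambda)=m\}|\ \ll\ \sum_{j=0}^{r}\frac{X^{j/2}}{a_j(L_n)},
\]
not just the leading $X^{r/2}/\mathrm{covol}(L_n)+O(1)$ term you wrote (which is inaccurate when the ellipsoid and the fundamental domain are comparable in scale), and the cutoff $n\ll X^{(b+\epsilon)/2}$ coming from $\mu_1(L_n)^2\le 2X$. With the polynomial minima bound this gives $O_\epsilon(X^{(b+\epsilon)/2})=o(X^{(b+1)/2}\log X)$, which is exactly tight — there is no room for your geometric series to be replaced by a polynomial one without the global input. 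In short: you need \Cref{Diopbound} (hence the modularity theorem) to control the minima of $L_n$; a purely local $\mathfrak{P}$-adic argument cannot supply the required growth.
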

\subsection{Function field setting}

We assume in this section that the lattice $(L,Q)$ is self-dual at $p$. Then the Shimura variety $\mathcal{M}$ has smooth reduction at $p$ and we denote its reduction by $\mathcal{M}_{\mathbb{F}_p}$. Given an admissible cone decomposition $\Sigma$, we denote by $\mathcal{M}_{\mathbb{F}_p}^{\Sigma}$ the reduction of the toroidal compactification $\cM^{\Sigma}$. We first give a  new formulation of \Cref{th:K3ff}, see \Cref{th:main-ff-general},  then we will give the main estimates that will allow us to prove the latter.
\medskip

Let $\mathcal{X}\rightarrow \mathscr{S}$ be a generically ordinary non-isotrivial family of K3 surfaces over a smooth curve $\scrS$ over $\overline\F_p$. The quadratic lattice $(L,Q)$ in this case corresponds to a maximal quadratic lattice orthogonal to the generic geometric Picard group in the K3 lattice. Hence $(L,Q)$ has discriminant coprime to $p$ by assumption and we get a period map by \cite[section 4]{madapusiperatate} 
\[\rho:\scrS\rightarrow \mathcal{M}_{\F_p},\]
which is a finite map and the image of the generic point is in the ordinary locus. The locus in $\scrS$ where the Picard rank jumps corresponds then exactly to the union over $m\geq 1$ of the intersections $\scrS\cap\cZ(m)_{\F_p}$. Hence \Cref{th:K3ff} follows from the following theorem. 
\begin{theorem}\label{th:main-ff-general}
Let $\scrS\rightarrow \mathcal{M}_{\F_p}$ be a finite map with generically ordinary image and not contained in any special divisor. Then there exists infinitely many closed points in $\scrS$ in the union of $\cZ(m)_{\mathbb{F}_p}$ for integers $m$ coprime with $p$.
\end{theorem}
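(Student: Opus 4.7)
The plan is to mimic the argument used to prove \Cref{t:main_sp_end}, replacing Arakelov intersection numbers on $\cY$ by ordinary intersection numbers on the smooth projective compactification of $\scrS$ over $\overline{\F}_p$. First, let $\overline{\scrS}$ be the smooth projective compactification of $\scrS$. By the valuative criterion of properness applied to $\cM_{\F_p}^{\Sigma}$, the finite map $\rho:\scrS\to \cM_{\F_p}$ extends to a map $\overline{\rho}:\overline{\scrS}\to \cM_{\F_p}^{\Sigma}$ for any complete smooth $K$-admissible cone decomposition $\Sigma$. Following the argument of \Cref{choice-toroidal-nf}, and using that $\overline{\scrS}\setminus \scrS$ is finite, I refine $\Sigma$ so that each boundary point $v\in \overline{\scrS}\setminus \scrS$ maps into a locally closed boundary divisor stratum $\cB^{\Upsilon}$ or $\cB^{\Xi,\sigma}$ (with $\sigma$ an inner ray).

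Next, I take the reduction mod $p$ of the modularity theorem \Cref{Theorem-modularity} and pull it back through $\overline{\rho}$, then take degrees on $\overline{\scrS}$. The resulting generating series $\sum_m \deg_{\overline{\scrS}}\bigl(\overline{\rho}^*\cZ^{tor}(m)\bigr)q^m$ is a component of a vector-valued modular form of weight $1+\tfrac{b}{2}$, and the standard Hecke bound gives $\deg_{\overline{\scrS}}\overline{\rho}^*\cZ^{tor}(m) = O(m^{b/2})$. Combined with the decomposition analogous to \Cref{Eq:decomposition} and ad hoc polynomial bounds on $\mu_{\Upsilon}(m)$ and $\mu_{\Xi,\omega}(m)$ from \cite{bruinierzemel}, this yields the global bound $(\overline{\scrS}.\cZ(m)) = O(m^{b/2})$.

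The plan is then to average this bound over $m$ in the set $S_{D,X}$ (with $p$ playing the role of $N$) and to compare it with two local inputs. On the one hand, the function-field main-term estimate from \cite{maulik-shankar-tang-K3}, applied to the generically ordinary non-isotrivial family $\scrS$, gives a lower bound on $\sum_{m\in S_{D,X}}(\scrS.\cZ(m))$ of order at least $X^{(b+1)/2}\log X$ along a density-one subsequence of $X$'s. On the other hand, one needs the function-field analog of \Cref{p:finite-place}: for each boundary point $v\in \overline{\scrS}\setminus \scrS$,
\[\sum_{m\in S_{D,X}} (\overline{\scrS}.\cZ(m))_v = o\bigl(X^{(b+1)/2}\log X\bigr).\]

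The main obstacle is precisely this last boundary estimate. Using \Cref{ss:dia-sp-II} and \Cref{equation-typeIII} to describe $\cZ(m)$ in the formal completion along the boundary stratum through $\overline{\rho}(v)$, the local intersection multiplicity at $v$ is controlled by the number of vectors $\lambda$ of norm $m$ in a decreasing sequence of positive definite sub-lattices $(L_n, Q)$, where the parameter $n$ records the order of vanishing of the tangent cocharacter of $\overline{\rho}$ at $v$. A geometry-of-numbers argument, exploiting that the successive minima of $(L_n,Q)$ grow with $n$, then produces the desired sub-main-term bound. Combining the three estimates forces infinitely many $m$ coprime to $p$ to yield non-empty intersections $\scrS\cap \cZ(m)_{\F_p}$, completing the proof.
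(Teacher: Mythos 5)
The overall skeleton of your argument (toroidal extension, refinement of $\Sigma$, modularity, local bounds at boundary points) is the right one, and your description of the boundary estimate via the decreasing lattices $L_n$ is faithful to the paper. However, the contradiction mechanism you describe in the last paragraph is not the one that works in the function-field setting, and it has two concrete gaps.

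First, you are importing the \emph{number-field} averaging scheme wholesale. You average over $S_{D,X}$ (the set of $m$ with $m/D$ a square), which has $\asymp X^{1/2}$ elements, and you invoke a lower bound of the form $X^{(b+1)/2}\log X$. The $\log$ factor and the square restriction are artifacts of the archimedean Green-function estimate in \Cref{p:archimidean-place}; there is no archimedean place in the function-field setting, and the paper accordingly averages over a \emph{positive-density} set $S$ of $m$'s coprime to $p$ and representable by $(L,Q)$. With a positive-density set, $\sum_{m\in S_X}|c(m)| \asymp X^{b/2+1}$, and the global modularity gives an \emph{equality} $\sum_{m\in S_X}(\cZ(m)_{\F_p}.\overline{\scrS}) = \sum_{m\in S_X}|c(m)|\,(\overline{\scrS}.\mathcal{L}_{\F_p}) + o(X^{b/2+1})$, not merely an $O$-bound; your cited ``lower bound from \cite{maulik-shankar-tang-K3}'' is not a separate input (indeed, as stated it would already contradict your own upper bound without reference to intersection points, which signals that it is not an available ingredient).

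Second, and more seriously, your proposal never addresses the \emph{supersingular interior points}. In the function-field setting, a supersingular point $P\in\scrS$ can contribute a term of the same order $X^{b/2+1}$ as the global main term, so ``boundary points are $o(\text{main term})$'' does not by itself force infinitely many intersection points. The paper's contradiction is one of \emph{coefficients}, not orders of magnitude: by \Cref{p:local-good-ff}, a supersingular point contributes $\alpha\sum_{m}g_P(m)+O(X^{(b+1)/2})$ with an absolute constant $\alpha<1$, and since the Hasse invariant is a section of $\mathcal{L}_{\F_p}^{\otimes(p-1)}$, one has $\sum_P h_P \leq (p-1)(\overline{\scrS}.\mathcal{L}_{\F_p})$; hence assuming finitely many intersection points gives a total local contribution at most $\alpha\sum_m|c(m)|\,(\overline{\scrS}.\mathcal{L}_{\F_p}) + O(X^{(b+1)/2})$, which is strictly less than the global leading term because $\alpha<1$. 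Without the supersingular analysis and the Hasse-invariant bookkeeping, the argument does not close.
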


Let $\mathscr{S}$ be a smooth curve as in the theorem above. By properness, we can extend the map \[\rho:\overline\scrS\rightarrow \mathcal{M}_{\F_p}^{\Sigma},\]
where $\overline\scrS$ is the smooth compactification of $\scrS$. 
We have the following proposition whose proof is similar to \Cref{choice-toroidal-nf} and hence we omit it.
\begin{proposition}\label{p:choice-tor-ff}
There exists a refinement of the cone decomposition $\Sigma$ such that the image of $\overline{\scrS}$ in $\mathcal{M}_{\overline{\F}_p}$ intersects the boundary only in strata corresponding to locally closed divisors. 
\end{proposition}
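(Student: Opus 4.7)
The plan is to mirror the proof of Proposition~\ref{choice-toroidal-nf} verbatim, replacing the complete local rings $\Spf(W(\overline{\F}_p))$ at primes of bad reduction by the complete local rings $\Spf(\overline{\F}_p[[t]])$ at the finitely many points of $\overline{\scrS}\setminus \scrS$. The only content is geometry of numbers inside $\Gamma_\Xi$.

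Concretely, let $s\in \overline{\scrS}\setminus\scrS$ be a closed point. By the stratification \eqref{eq:stratification-integral} reduced mod $p$, the image $\rho(s)$ lies in a stratum $\mathcal{B}^{\Phi,\sigma}_{\overline{\F}_p}$ for some toroidal stratum representative $(\Phi,\sigma)$. If $\Phi=\Upsilon$ is of type~II, then by \Cref{ss:formal-tII} the associated stratum is already a divisor and there is nothing to do. If $\Phi=\Xi$ is of type~III and $\sigma$ has dimension $r\geq 2$, then using \Cref{ss:special-tIII} (and, if necessary, replacing $\overline{\scrS}$ by an étale neighborhood of $s$ in order to lift through the étale cover $\widehat{\mathcal{T}}_{\Xi,0}(\sigma)_{/\overline{\F}_p}\to \widehat{\mathcal{M}}^{\Sigma}_{/\overline{\F}_p}$), the morphism induced on formal completions reads
\[
\Spf(\overline{\F}_p[[t]])\longrightarrow \widehat{\mathcal{T}}_{\Xi}(\sigma)_{/\overline{\F}_p}=\Spf\!\left(\overline{\F}_p[q^\alpha\mid (\alpha,\sigma)=0]\otimes \overline{\F}_p[[q^\alpha\mid (\alpha,\sigma)>0]]\right).
\]
This corresponds to a ring homomorphism, and the rule $\alpha\mapsto \val_t(q^\alpha)$ defines a linear form on $\Gamma_\Xi^\vee$ which is strictly positive on $\sigma^\vee\cap \{(\alpha,\sigma)>0\}$. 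Hence it is represented by an element $\omega\in \Gamma_\Xi$ satisfying $(\omega,\alpha)>0$ whenever $(\alpha,\sigma)>0$; equivalently $\omega$ lies in $\sigma$ (and in its interior, since the map is non-constant because $\rho$ is finite and $\overline{\scrS}$ is not contained in the boundary).

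Let $\sigma'=\mathbb{R}_{\geq 0}\omega$ and let $\Sigma'$ be any smooth refinement of $\Sigma$ in which $\sigma'$ appears as a one-dimensional ray; such a refinement exists by standard toric resolution, and only affects cones containing $\omega$. The induced map $\mathcal{M}^{\Sigma'}\to \mathcal{M}^{\Sigma}$ is a proper birational blow-up and, by the very definition of $\omega$, the lift of $\rho$ through this blow-up now sends $s$ into the codimension-one stratum indexed by $(\Xi,\sigma')$. Since $\overline{\scrS}\setminus\scrS$ is finite, iterating this procedure finitely many times yields the desired cone decomposition. The main obstacle is the descent of the cocharacter $\omega$ through the étale cover $\widehat{\mathcal{T}}_{\Xi,0}(\sigma)\to \widehat{\mathcal{M}}^{\Sigma}$, but this is handled exactly as in the proof of Proposition~\ref{choice-toroidal-nf}: the quotient by $K_\Xi/K_{\Xi,0}$ acts on $\Gamma_\Xi$ preserving $\sigma$, so choosing $\omega$ after replacing $\overline{\scrS}$ by an étale neighborhood (or equivalently averaging over the finite stabilizer) produces a ray defined over the original base.
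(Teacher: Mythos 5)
Your proof is correct and follows exactly the route the paper intends: the paper itself omits this proof, stating only that it is "similar to \Cref{choice-toroidal-nf}," and your argument is precisely the expected equal-characteristic transposition, replacing $\mathrm{Spf}(W(\overline{\F}_p))$ by $\mathrm{Spf}(\overline{\F}_p[[t]])$ and using the $t$-adic valuation to produce the tangent cocharacter $\omega\in\Gamma_\Xi$ that defines the refining ray. (One small simplification: since $\overline{\F}_p[[t]]$ is strictly Henselian, the lift through the finite \'etale cover $\widehat{\mathcal{T}}_{\Xi,0}(\sigma)\to\widehat{\mathcal{M}}^{\Sigma}$ exists automatically, so the detour through an \'etale neighborhood of $s$ is not needed.)
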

Let $\Sigma$ be a polyhedral cone decomposition which satisfies the conditions of the previous proposition. By abuse of notations, if $D\subset \mathcal{M}^{\Sigma}_{\mathbb{F}_p}$ is a Cartier divisor, we write
\[(D.\overline\scrS)=\deg_{\overline\scrS}\rho^{*}D.\]

We have then the following global estimate.    
\begin{proposition}\label{p:globalff}
As $m\rightarrow  \infty$, we have 
\[(\cZ(m)_{\overline{\F}_p}.\overline{\scrS})=|c(m)|(\overline{\scrS}.\mathcal{L}_{\F_p})+o(m^{\frac{b}{2}}).\]
\end{proposition}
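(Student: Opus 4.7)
The plan is to pair the modular generating series $\Phi_L$ from \Cref{Theorem-modularity} against the curve $\overline{\scrS}$ and to decompose the resulting vector-valued modular form into an Eisenstein and a cuspidal part. The Eisenstein part will provide the main term $|c(m)|(\overline{\scrS}.\mathcal{L}_{\F_p})$, while the cuspidal part, together with the corrections coming from boundary divisors, will contribute the $o(m^{b/2})$ error.

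Concretely, I would first pull back via $\rho:\overline{\scrS}\to \cM^{\Sigma}_{\overline{\F}_p}$ and apply the degree functional $D\mapsto \deg_{\overline{\scrS}}\rho^{*}D$ on $\CH^1(\cM^{\Sigma}_{\overline{\F}_p})$. Applying this functional to the generating series of \Cref{Theorem-modularity} yields
\[\Psi(\tau) = \sum_{\beta\in L^\vee/L}\sum_{m\in Q(\beta)+\Z} \bigl(\cZ^{tor}(\beta,m)_{\overline{\F}_p}.\overline{\scrS}\bigr)\, q^m\, e_\beta,\]
a vector-valued modular form of weight $1+b/2$ and type $\rho_L$. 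Since $\widehat{\cZ}(0,0)$ is defined as the class dual to $\widehat{\mathcal{L}}$, the constant term of $\Psi$ in the $e_0$-component equals $-(\overline{\scrS}.\mathcal{L}_{\F_p})$.

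I would then decompose $\Psi = \Psi^{\mathrm{Eis}} + \Psi^{\mathrm{cusp}}$. The Eisenstein part is uniquely determined by its constant term, and its $(0,m)$-th Fourier coefficient equals $|c(m)|\,(\overline{\scrS}.\mathcal{L}_{\F_p})$ for the coefficient $c(m)$ of \cite[Section 3.3]{sstt}, satisfying $|c(m)|\asymp m^{b/2}$. The cuspidal part has Fourier coefficients of size $O(m^{b/4+\epsilon})$ by the Hecke (or Deligne) bound on holomorphic cusp forms of weight $1+b/2$, hence $o(m^{b/2})$. Finally, from the decomposition
\[\cZ^{tor}(m) = \cZ(m) + \sum_\Upsilon \mu_\Upsilon(m)\,\cB^\Upsilon + \sum_{(\Xi,\omega)} \mu_{\Xi,\omega}(m)\,\cB^{\Xi,\omega}\]
of \Cref{eq:special-divisor}, together with the fact that $\overline{\scrS}$ meets only finitely many boundary divisors, the proposition reduces to the bound $\mu_\Upsilon(m), \mu_{\Xi,\omega}(m) = o(m^{b/2})$.

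The main obstacle is precisely this last bound. By the explicit formulas of \cite{bruinierzemel}, the multiplicities $\mu_\Upsilon(m)$ and $\mu_{\Xi,\omega}(m)$ are identified with Fourier coefficients of explicit automorphic forms attached to the lower-rank lattices $D$ (positive definite of signature $(b-2,0)$) and $K_I$ (Lorentzian of signature $(b-1,1)$); these forms have weight strictly smaller than $1+b/2$, so standard bounds on their Fourier coefficients yield $O(m^{(b-1)/2+\epsilon})$, which is $o(m^{b/2})$. Assembling the Eisenstein main term, the cuspidal error, and these boundary estimates yields the claimed asymptotic.
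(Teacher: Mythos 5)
Your proposal captures exactly the paper's structure: pair the modular generating series of \Cref{Theorem-modularity} against $\overline{\scrS}$, use the Eisenstein/cuspidal decomposition to extract the main term $|c(m)|(\overline{\scrS}.\mathcal{L}_{\F_p})$ and the $o(m^{b/2})$ error for $(\cZ^{tor}(m)_{\F_p}.\overline{\scrS})$, and then subtract the boundary correction terms from \Cref{eq:special-divisor}, reducing the proposition to showing $\mu_\Upsilon(m),\mu_{\Xi,\omega}(m)=o(m^{b/2})$.

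The gap is in how you dispatch that last step. You assert that both multiplicities are ``Fourier coefficients of explicit automorphic forms attached to the lower-rank lattices $D$ and $K_I$'' of weight $<1+b/2$, so that ``standard bounds'' give $O(m^{(b-1)/2+\epsilon})$. This is not quite what happens, and the discrepancy matters because these estimates are one of the paper's main technical contributions. For type II, \Cref{Eq:mult-typeII} and the identity $\mu_\Upsilon(m)=\tfrac{2}{b-2}m\,c(D,0,m)$ show that $\mu_\Upsilon(m)$ is $m$ times a theta-coefficient of $D$ (the derivative $q\frac{d}{dq}$ produces a factor of $m$ and makes the object quasi-modular, not modular); the bound $O(m^{b/2-1+\epsilon})$ then follows from $c(D,0,m)\ll_\epsilon m^{b/2-2+\epsilon}$, not directly from ``lower weight.'' More seriously, for type III the quantity $\mu_{\Xi,\omega}(m)=\tfrac{\sqrt N}{8\sqrt2\pi}\Phi^K_m(\omega/\sqrt N)$ by \Cref{Eq:mult-typeIII} is the value of an automorphic \emph{Green function} on the Lorentzian hyperbolic space at a distinguished boundary ray, not a Fourier coefficient of a holomorphic modular form of lower weight, and there is no off-the-shelf coefficient bound to invoke. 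The paper obtains $\mu_{\Xi,\omega}(m)\ll_\epsilon m^{(b-1)/2+\epsilon}$ by expanding $\Phi^K_m$ via Bruinier's hypergeometric-series formula, checking that $F(\tfrac{b-1}{2},1,1+\tfrac b2;z)$ is uniformly bounded on the closed unit disc, and then controlling the resulting lattice sum $\sum_{\lambda\in K_I,\,Q(\lambda)=m}Q(\lambda_{v^\perp})^{-(b-1)/2}$ with an equidistribution result for integral points on quadrics (\Cref{p:quadratic-form}). Your proposal omits this entire argument, which is where the genuine difficulty of the proposition lies.
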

For any integer $m$, we can decompose: 
\[(\cZ(m)_{\overline{\F}_p}.\overline{\scrS})=\sum_{P\in \overline{\F}_p}m_P(\cZ(m)_{\F_p},\overline{\scrS}),\]
where $m_P(\cZ(m)_{\F_p},\overline{\scrS})$ is the multiplicity of intersection at $P$. Our next goal is to estimate in average these local multiplicities and we start by the good reduction case already treated in \cite[Prop. 7.11, Th. 7.18 ]{maulik-shankar-tang-K3}.
%Since the intersection of $\overline{\mathscr{S}}$ and $\cZ(m)_{\overline{\F}_p}$ is supported in dimension zero, the above intersection number can be rewritten as: 
%\[(\cZ(m)^{tor}_{\overline{\F_p}}.\overline{\scrS})=(\cZ(m)^{tor}_{\overline{\F_p}}.\overline{\scrS})+\sum_\Upsilon(\cB_{\Upsilon,\overline{\F_p}}.\overline{\scrS})+\sum_{\Xi,\sigma}(\cB_{\Xi,\omega,\overline{\F_p}}.\overline{\scrS}).
%\sum_{x\in \cZ(m)(\overline{\F}_p)\cap \scrS(\overline{\F}_p)}m_x+\sum_\Upsilon\sum_{x\in \cB_{\Upsilon}(\overline{\F}_p)\cap \scrS(\overline{\F}_p)}m_x+\sum_{\Xi,\sigma}\sum_{x\in \cB_{\Xi,\omega}(\overline{\F}_p)\cap \scrS(\overline{\F}_p)}m_x,
%\]
%where $m_x$ is the multiplicity of intersection at the point $x$. 

%\begin{proposition}
%For $m\rightarrow \infty$, we have 
%\[(\cZ(m)^{tor}_{\overline{\F_p}}.\overline{\scrS})=O(m^\frac{b}{2})\]
%\end{proposition}

Let $S$ be as in \cite[Section 7.1]{maulik-shankar-tang-K3}, i.e., a set of integers of positive density such that every $m\in S$ is coprime to $p$ and is representable by the quadratic lattice $(L,Q)$.

For $P\in (\scrS\cap\mathcal{M})(\F_p)$, we define as in \cite[Definition 7.6]{maulik-shankar-tang-K3}
\[g_P(m)=\frac{h_p}{p-1}|c(m)|,\]
where $h_p$ is the order of vanishing of the Hasse invariant at $P$, see {\it loc. cit.}
The following proposition is the combination of Proposition 7.11 and Theorem 7.18 from \cite{maulik-shankar-tang-K3}.
\begin{proposition}\label{p:local-good-ff}
Let $P\in \scrS(\overline{\F}_p)$. Then:
\begin{enumerate}
    \item If $P$ is not supersingular then 
    \[\sum_{m\in S_X}m_P(\cZ(m)_{\F_p}.\scrS)=O(X^{\frac{b}{2}}\log X).\]
    \item There exists an absolute constant $0<\alpha<1$ such that for any supersingular point $P$ we have \[\sum_{m\in S_X}m_P(\cZ(m)_{\F_p}.\scrS)=\alpha\sum_{m\in S_X}g_p(m)+O(X^{\frac{b+1}{2}}).\]
\end{enumerate}
\end{proposition}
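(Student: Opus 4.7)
The plan is to reduce the proposition directly to the two results (Proposition 7.11 and Theorem 7.18) of \cite{maulik-shankar-tang-K3} that are being combined, after checking that the present setup matches theirs. Both results concern a finite map from a smooth curve into the integral GSpin Shimura variety $\cM_{\F_p}$, exactly the situation we are in (after choosing $\Sigma$ as in \Cref{p:choice-tor-ff}), and the local intersection multiplicities we must bound are the same objects they estimate. So the proof is essentially a matching-of-hypotheses followed by citation.

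For part (1), when $P$ is not supersingular, I would use the description of the formal neighborhood of $P$ in $\cM_{\F_p}$ via deformations of the Kuga--Satake abelian variety: a special divisor $\cZ(m)_{\F_p}$ is cut out locally by the condition that a special quasi-endomorphism of norm $m$ deforms along the base. The multiplicity $m_P(\cZ(m)_{\F_p}.\scrS)$ is then controlled by a count of special endomorphisms of the fiber at $P$ weighted by their depth of deformation along $\scrS$. A lattice-point count in the (Lorentzian or anisotropic) lattice of special endomorphisms, together with the Eichler--Selberg-type bound $|c(m)| = O(m^{b/2})$ coming from the modularity statement \Cref{Theorem-modularity}, gives the $O(X^{b/2}\log X)$ bound. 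This is exactly the content of Proposition 7.11 of \cite{maulik-shankar-tang-K3}.

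For part (2), when $P$ is supersingular, the Kuga--Satake abelian variety acquires a full positive-definite lattice of special endomorphisms in characteristic $p$, so $P$ lies on $\cZ(m)_{\F_p}$ whenever $m$ is represented by this lattice. The obstruction to deforming a norm-$m$ endomorphism along $\scrS$ is measured by the Hasse invariant, which vanishes to order $h_P$ at $P$; this produces the $\tfrac{h_P}{p-1}|c(m)|$ shape of the main term $g_P(m)$. Summing over $m\in S_X$ and matching with the Fourier coefficient of the weight-$1+b/2$ Eisenstein series representing the main term of $|c(m)|$ gives the asserted $\alpha \sum_{m\in S_X} g_P(m) + O(X^{(b+1)/2})$, which is Theorem 7.18 of loc.\ cit.

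The only genuine obstacle, were one to redo this from scratch, is the supersingular case in part (2): isolating the universal constant $\alpha \in (0,1)$ requires analyzing the Frobenius action on the space of special endomorphisms at a supersingular point and evaluating a local orbital-type integral, and the sharp $O(X^{(b+1)/2})$ error demands careful control of the Eisenstein-series remainder as in \cite[\S 7]{maulik-shankar-tang-K3}. Since both of these computations are purely local at $P$ and independent of whether $\scrS$ is proper, they transfer to our setting without modification, so I would invoke them rather than reproduce them.
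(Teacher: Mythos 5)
Your proposal matches the paper's approach exactly: the paper gives no independent proof of this proposition, stating only that it is the combination of Proposition 7.11 and Theorem 7.18 of \cite{maulik-shankar-tang-K3}, and your argument is precisely that reduction, together with a correct sketch of what those two results say (lattice-point counting via special endomorphisms in the non-supersingular case, and the Hasse-invariant/orbital analysis at supersingular points). The verification that the hypotheses transfer — the map $\scrS \rightarrow \cM_{\F_p}$ being finite, the multiplicities being purely local so properness of $\scrS$ is irrelevant — is exactly the implicit content of the paper's citation.
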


Our new contribution in this setting is the following theorem which gives an estimate on intersection multiplicities at points where $\overline{S}$ intersects the boundary of $\cM^{\Sigma}_p$.
\begin{proposition}\label{t:local-bad-ff}
Let $P\in \overline{\scrS}(\overline{\F}_p)$ a point mapping to the boundary of $\mathcal{M}^{\Sigma}_{\mathbb{F}_p}$. Then we have the following estimate:
\[\sum_{m\in S_X}m_P(\cZ(m)_{\F_p}.\overline{\scrS})=O(X^{\frac{b}{2}}\log X).\]
\end{proposition}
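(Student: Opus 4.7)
The plan is to distinguish by the type of boundary stratum containing the image of $P$ (type II or type III, in the sense of Section 2) and in each case use the explicit local description of $\widehat{\cZ}(m)$ to express $m_P(\cZ(m).\overline{\scrS})$ as a sum over a positive-definite lattice weighted by a valuation-type function, then finish with a geometry-of-numbers argument.

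First I treat the type III case, which is the more delicate. By \Cref{p:choice-tor-ff} and \Cref{equation-typeIII}, after passing to the \'etale cover I lift the formal arc $\widehat{\overline{\scrS}}_{/P} = \mathrm{Spf}(\overline{\F}_p[[s]])$ to a map into $\widehat{\cT}_\Xi(\sigma)_{/\overline{\F}_p}$. The image of $P$ corresponds to a torsion point $P_0 \in T_{b-1}(\overline{\F}_p)$ (every $\overline{\F}_p$-point of a torus is torsion), and the arc specifies power series $q^\alpha(s) \in \overline{\F}_p[[s]]^\times$ for each $\alpha \in \Gamma_\Xi^\vee$ with $(\alpha,\omega)=0$. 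Writing $L_0$ for the finite-index positive-definite sublattice of $\Lambda := K_I \cap \omega^\bot$ cut out by $q^\mu(P_0)=1$, and $V(\mu) := \val_s(q^\mu(s)-1) \in \Z_{\geq 1}$ (finite for $\mu \neq 0$ because $\overline{\scrS}$ is not contained in any special divisor), one rewrites
\[
m_P(\cZ(m).\overline{\scrS}) = C_P \sum_{\mu \in L_0,\, Q(\mu) = m} V(\mu)
\]
up to a bounded multiplicative constant $C_P$ coming from the degree of the \'etale cover.

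The function $V$ has two structural properties: $V(a\mu) = p^{v_p(a)} V(\mu)$ for $a \in \Z$, a consequence of $(1+x)^p = 1 + x^p$ in characteristic $p$, and $V(\mu + \nu) \geq \min(V(\mu), V(\nu))$. Hence $L_{0,n} := \{\mu \in L_0 : V(\mu) \geq n\}$ is a decreasing chain of sublattices containing $p^{\lceil \log_p n \rceil} L_0$, and in particular $[L_0 : L_{0,n}] \leq (pn)^{b-1}$. The crucial quantitative step is a lower bound on the first successive minimum of the form $\mu_1(L_{0,n}) \geq c\sqrt{n}$ for an arc-dependent constant $c > 0$. One analyzes the Newton polygon of $q^\mu(s)-1$ in a basis $\{e_j\}$ of $L_0$: the generic value of $V(\sum a_j e_j)$ is $\min_j p^{v_p(a_j)} V(e_j)$, and higher-order cancellations occur only on increasingly thin sublattices. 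A finite descent on the depth of these cancellations, which terminates because $\bigcap_n L_{0,n} = \{0\}$ by the non-containment hypothesis, yields the desired bound. Combined with the standard estimate $|L_{0,n} \cap \{Q \leq 2X\}| = O((X/n)^{(b-1)/2})$ for $n \leq O(X)$ (and zero otherwise), swapping the order of summation gives
\[
\sum_{m \in S_X} m_P(\cZ(m).\overline{\scrS}) = C_P \sum_{n \geq 1} |L_{0,n} \cap \{Q \in S_X\}| \leq C' X^{(b-1)/2} \sum_{n=1}^{O(X)} n^{-(b-1)/2} = O(X^{b/2} \log X),
\]
where the $\log X$ is non-trivial only when $b = 3$.

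The type II case is handled analogously starting from \Cref{ss:dia-sp-II}: the torus is replaced by the abelian scheme $D \otimes \cE$ where $\cE$ is the ordinary elliptic curve at the image of $P$ in $\cM^h_\Upsilon$, and the formal multiplicative group $\widehat{\cE}$ plays the role of the formal torus. One again identifies a positive-definite lattice $L_0 \subseteq D$ (of rank $b$) and a valuation function $V$ with the same formal properties, and the same Newton-polygon analysis and geometry-of-numbers argument yield the same bound. The main obstacle in both cases is the successive-minima estimate $\mu_1(L_{0,n}) \geq c\sqrt{n}$: while the Frobenius-scaling property of $V$ combined with the non-containment hypothesis clearly forces $L_{0,n}$ to be eventually deep inside $L_0$, quantifying this rate in the presence of arbitrarily complex cancellation patterns requires a careful and sustained Newton-polygon analysis, which is the principal technical input of this section.
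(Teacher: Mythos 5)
The overall skeleton of your argument matches the paper's: split by boundary type, use the explicit formal local description of $\widehat{\cZ}(m)$ from Section 2, define the chain of lattices $L_n$ via a valuation function, and finish with a geometry-of-numbers bound via the successive minima. You also correctly identify the two structural properties of $V$: ultrametric subadditivity and the exact Frobenius scaling $V(p\mu)=pV(\mu)$ in characteristic $p$. However, the crucial successive-minima estimate is where your proposal breaks down, and the gap is not a minor technicality.

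You claim $\mu_1(L_{0,n}) \geq c\sqrt{n}$ via a ``Newton-polygon analysis'' of cancellation patterns, and you acknowledge this is the principal technical input without carrying it out. I do not believe a purely local argument of the kind you sketch can produce any quantitative rate at all, let alone $\sqrt{n}$. The obstruction is that the formal arc $\widehat{\overline{\scrS}}_{/P}$ is, locally, an arbitrary map to the formal torus (or formal abelian part in type II): nothing at the level of the completed local ring prevents a short vector $\mu$ from having $V(\mu)$ arbitrarily large, since the quantities $q^{e_j}(s)$ could be engineered to cancel to high order. The hypothesis $\bigcap_n L_n = \{0\}$ (not contained in a special divisor) only forces $V(\mu)<\infty$ for each fixed $\mu\neq 0$; it gives no uniform rate in $n$, and a ``finite descent on depth of cancellations'' cannot manufacture one, because the depth at which the descent closes off is exactly the arc-dependent quantity you need to bound. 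The paper resolves this with a \emph{global} input: the modularity of the generating series (\Cref{Theorem-modularity}) yields the bound $(\cZ(m)_{\overline{\F}_p}.\overline{\scrS}) = O(m^{b/2})$ of \Cref{p:globalff}, and then, exactly as in \Cref{p:lattices-typeII}(3), the multiplicity $n$ of a vector $w_0\in L_n$ with $Q(w_0)=\mu_1(L_n)^2$ is absorbed into the global intersection number, forcing $\mu_1(L_n)\gg_\epsilon n^{1/(b+\epsilon)}$. Note that this is \emph{weaker} than your $\sqrt{n}$, and since the modularity bound $O(m^{b/2})$ is essentially tight, one cannot hope for better in general; your claimed exponent appears to be unachievable, not merely unproven. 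Feeding $n^{1/(b+\epsilon)}$ into the geometry-of-numbers count (as in the proof of \Cref{p:lattice-estimate}) still gives $O_\epsilon(X^{(b+\epsilon)/2})$, which is all that is needed.

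Two smaller points: in the type II case the abelian part is $D\otimes\cE$ with $D$ of rank $b-2$, not $b$ as you write; and the bound stated in the proposition should come out as $O_\epsilon(X^{(b+\epsilon)/2})$ from either computation (your arithmetic landing on $X^{b/2}\log X$ miscounts, since with rank $b-1$ and your claimed minima you would actually get $O(X^{(b-1)/2}\log X)$, which is stronger but relies on the unproven estimate).
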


\subsection{Proof of the main theorems}
Assuming the estimates in the previous section we now indicate how to prove \Cref{th:K3nf} and \Cref{th:K3ff}.
\begin{proof}[Proof of \Cref{th:K3nf}]
It is enough to prove \Cref{t:main_sp_end} in a similar way to \cite[Section 8]{sstt}. For convenience of the reader, we will sketch the proof. Assume for the sake of contradiction that there are only finitely many primes of good reduction such that $\cY$ intersects a special divisor of the form $\cZ(Dm^2)$ where $Dm^2$ is coprime with $N$ and is represented by $(L,Q)$.
By \Cref{p:globalnf} and \Cref{p:archimidean-place}, there exists a subset $S_{bad}\subset \Z_{>0}$ of logarithmic asymptotic density zero such that:
\[(\cY.\cZ(m))=-c(m)\log(m)+o(m^{\frac{b}{2}}\log(m))\asymp m^{\frac{b}{2}}\log(m).\]
Let $S^{good}_{D,X}=\{m\in S_{D,X}, m\notin S_{bad},\, (m,N)=1\}$, then one can easily check that $|S^{good}_{D,X}|\asymp X^{\frac{1}{2}}$ and $c(m)\gg X^{\frac{b}{2}}\log X$ for $m\in S^{good}_{D,X}$. Hence we get 
\begin{align}\label{contradiction}
    \sum_{m\in S^{good}_{D,X}}(\cY.\cZ(m))\asymp X^{\frac{b+1}{2}}\log X.
\end{align}
On the other hand, by Proposition 4.5 and \Cref{p:finite-place}, we get by summing over the finitely many places where either $\cY$ intersects a $\cZ(Dm^2)$ or which are of bad reduction
\[\sum_{m\in S^{good}_{D,X}}(\cY.\cZ(m))=o(X^{\frac{b+1}{2}}\log X),\]
which contradicts \Cref{contradiction}. 
\end{proof}
\begin{proof}[Proof \Cref{th:K3ff}]
The proof is similar: assume that there are only finitely many points in the union $\left(\cup_{m,m\wedge p=1}\cZ(m)\cap \scrS\right)(\overline{\F}_p)$ and let $S$ be a set as in Section 4.2. Then by \Cref{p:globalff}, we have 
\[\sum_{m\in S_X}(\cZ(m)_{\mathbb{F}_p}.\overline{\scrS})=\sum_{m\in S_X}|c(m)|(\overline{\scrS}.\mathcal{L}_{\mathbb{F}_p})+o(X^{b/2+1}).\]
On the other hand, by \Cref{p:local-good-ff} and \Cref{t:local-bad-ff} we have
\begin{align*}
    \sum_{m\in S_X}(\cZ(m)_{\F_p}.\overline{\scrS})&=\sum_{m\in S_X}\sum_{P\in \left(\cup_{m,m\wedge p=1}\cZ(m)\cap \scrS\right)(\overline{\F}_p)}m_P(\cZ(m)_{\F_p}.\overline{\scrS})\\
    &=\alpha\sum_{m\in S_X}|g_P(m)|+O(X^{\frac{b+1}{2}}).\\
    \leq \alpha\sum_{m\in S_X}|c(m)|(\overline{\scrS}.\mathcal{L}_{\mathbb{F}_p})+O(X^{\frac{b+1}{2}}),
\end{align*}
where the last equality results from the fact that the Hasse invariant is a section of $\mathcal{L}_{\mathbb{F}_p}^{\otimes p-1}$. These two estimates contradict each other, hence the result. 
\end{proof}

\subsection{Global estimate}
We prove in this section simultaneously Propositions \ref{p:globalnf} and \ref{p:globalff}.

By \Cref{Theorem-modularity}, the following generating series 
\[\sum_{\beta\in L^\vee/L}\sum_{m\in Q(\beta)+\Z}h_{\widehat{\mathcal{Z}}^{tor}(\beta,m)}(\cY)q^m e_\beta,\] and 
\[\sum_{\beta\in L^\vee/L}\sum_{m\in Q(\beta)+\Z}(\mathcal{Z}^{tor}(\beta,m)_{\F_p}.\overline{\scrS})q^m e_\beta\]
are elements of $\mathrm{Mod}_{1+\frac{b}{2}}(\rho_L)$. Classical estimates on the growth of coefficients of modular forms imply that (see \cite[Example 2.3]{tayouequi} for more details): 
\[h_{\widehat{\mathcal{Z}}^{tor}(m)}(\cY)=O(m^{\frac{b}{2}})\] 

and \[ (\mathcal{Z}(m)^{tor}_{\F_p}.\overline{\scrS})=|c(m)|(\overline{\scrS}.\mathcal{L}_{\mathbb{F}_p})+o(m^{\frac{b}{2}}).\]

By \Cref{intersectionformula2} and  \Cref{Eq:decomposition}, we can write 
\begin{align}
(\cY.\cZ(m))+\sum_{\tau:K\hookrightarrow\C}\Phi_{m}(\cY^\tau)= h_{\widehat{\mathcal{Z}}^{tor}(m)}(\cY) -\sum_{\Upsilon}\mu_{\Upsilon}(m)(\cY.\cB^{\Upsilon})_{\fP}\log|\mathcal{O}_{K}/\mathfrak{P}|\\-\sum_{\Xi}\mu_{\Xi,\sigma}(m)\cdot (\cY.\cB^{\Xi,\sigma})_{\fP}  \log|\mathcal{O}_{K}/\mathfrak{P}|
\end{align}
and similarly, we can write: 
\begin{align}
(\overline{\scrS}.\cZ(m)_{\F_p})= (\mathcal{Z}^{tor}(m)_{\F_p}.\overline{\scrS}) -\sum_{\Upsilon}\mu_{\Upsilon}(m)(\overline{\scrS}.\cB^{\Upsilon,\F_p})\\-\sum_{\Xi,\sigma}\mu_{\Xi,\omega}(m)\cdot (\overline{\scrS}.\cB^{\Xi,\sigma}_{\F_p}).
\end{align}

Hence we only have to bound the growth of the multiplicities $\mu_\Upsilon(m)$ and $\mu_{\Xi,\omega}(m)$.\footnote{$\omega$ is the unique integral generator of $\sigma$.} This is given by the following lemma. 
\begin{proposition}\label{p:growth-multi}
As $m\rightarrow\infty$, we have the following estimates. 
\begin{enumerate}
    \item For any type II cusp label representative $\Upsilon$, we have \[\mu_\Upsilon(m)\ll_\epsilon m^{\frac{b}{2}-1+\epsilon}.\]
    \item For any type III toroidal stratum representative $(\Xi,\sigma)$ such that $\sigma$ is a ray, we have  \[\mu_{\Xi,\omega}(m)\ll_\epsilon m^{\frac{b-1}{2}+\epsilon}.\]
\end{enumerate}
\end{proposition}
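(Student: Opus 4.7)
\emph{Plan of proof.} The plan is to extract from \cite{bruinierzemel} explicit combinatorial formulas for $\mu_\Upsilon(m)$ and $\mu_{\Xi,\omega}(m)$, recognize each as a weighted representation sum for a positive-definite quadratic lattice of rank strictly less than $b$, and then invoke standard bounds on lattice point counts (equivalently, on Fourier coefficients of holomorphic modular forms of the appropriate weight).

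For part (1), I would fix $J\subset L_\Q$ the isotropic plane associated to $\Upsilon$ and let $D=J^\perp/J$ be the positive-definite plane of signature $(b-2,0)$. After unfolding \Cref{Eq:mult-typeII}, I expect $\mu_\Upsilon(m)$ to take the shape of a finite linear combination of terms $m\cdot r_D(\beta, m)$, where $r_D(\beta, m)$ is the number of vectors in $\beta+D$ of norm $m$ (the leading factor of $m$ comes from the quadratic nature of the weight function coming from the Kudla--Millson/Borcherds regularization along $B^\Upsilon$). The theta series $\sum_m r_D(\beta, m)\,q^m$ is a modular form of weight $(b-2)/2$, so by Hecke's bound combined with Deligne's bound on cuspidal coefficients and the divisor-function bound on the Eisenstein part, one gets $r_D(\beta, m)=O_\epsilon(m^{b/2-2+\epsilon})$ in the relevant range. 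Multiplying by $m$ then yields $\mu_\Upsilon(m)\ll_\epsilon m^{b/2-1+\epsilon}$.

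For part (2), I would let $\omega\in K_I$ be the primitive generator of $\sigma$ and $\omega'\in K_I^\vee$ the auxiliary element with $(\omega,\omega')=1$ used in \Cref{ss:fomral-tIII}. The analogous formula \Cref{Eq:mult-typeIII} should express $\mu_{\Xi,\omega}(m)$ as a sum over $\lambda$ in the positive-definite rank-$(b-1)$ sublattice $\omega^\perp\cap K_I^\vee$ with $Q(\lambda)=m$, weighted by a polynomial of degree at most one in the component of $\lambda$ along $\omega'$. The trivial bound $O(m^{1/2})$ on this polynomial weight, combined with the bound $O_\epsilon(m^{(b-1)/2-1+\epsilon})$ on the representation numbers of a rank-$(b-1)$ positive-definite lattice, gives $\mu_{\Xi,\omega}(m)\ll_\epsilon m^{(b-1)/2+\epsilon}$.

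The hard part will be the first step: deciphering the Bruinier--Zemel formulas, which involve regularized theta lifts and Kudla--Millson constants, to produce these clean combinatorial expressions with the right polynomial weight and the right ambient lattice. Once the formulas are in hand, the analytic bounds on representation numbers are classical (via the decomposition of theta series into Eisenstein and cuspidal parts). A secondary subtlety is verifying that the polynomial weight contributes exactly a factor of $m$ in the type II case and $m^{1/2}$ in the type III case, which is what forces the precise exponents $b/2-1$ and $(b-1)/2$ in the two estimates.
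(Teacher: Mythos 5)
Your treatment of part (1) is essentially the same as the paper's: the formula from \cite{bruinierzemel} reduces to $\mu_\Upsilon(m)=\tfrac{2}{b-2}\,m\,c(D,0,m)$ with $D=J^\perp_L/J$ a positive-definite lattice of rank $b-2$, and the standard coefficient bound $c(D,0,m)\ll_\epsilon m^{b/2-2+\epsilon}$ (Hecke/Deligne/Eisenstein decomposition) gives the stated exponent. That part is fine.

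Part (2) has a genuine gap. You assume $\mu_{\Xi,\omega}(m)$ is a polynomially weighted sum over the \emph{positive-definite} sublattice $\omega^\perp\cap K_I$, which would indeed let you fall back on the same theta-coefficient bounds used for part (1). But that is not what the Bruinier--Zemel multiplicity is. The paper's Equation~(4.6.2) and the explicit Green-function formula from \cite[Theorem 2.14]{bruinier} express $\mu_{\Xi,\omega}(m)$ (up to a constant and a factor $m^{b/2}$) as a sum over \emph{all} $\lambda$ in the Lorentzian lattice $K_I$ of signature $(b-1,1)$ with $Q(\lambda)=m$, weighted by a hypergeometric factor in $m/Q(\lambda_{v^\perp})$ that decays like $Q(\lambda_{v^\perp})^{-(b-1)/2}$. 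There are infinitely many such $\lambda$, and the sum converges only because of this decaying weight. Bounding such a sum by $m^{(b-1)/2+\epsilon}$ cannot be done with Eisenstein/cuspidal coefficient bounds for positive-definite theta series: one needs to count lattice points on the Lorentzian quadric $\{Q=m\}$ with a truncation in $Q(\lambda_{v^\perp})$, which is precisely the content of the paper's Proposition~4.13 and requires an equidistribution input (Duke, Eskin--Oh, or the circle method). This equidistribution step is the part that is genuinely absent from your proposal.

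As a secondary point, even granting your ansatz, your own exponents do not add up: a trivial $O(m^{1/2})$ weight times a rank-$(b-1)$ representation-number bound $O_\epsilon(m^{(b-1)/2-1+\epsilon})$ would give $O_\epsilon(m^{b/2-1+\epsilon})$, which is \emph{smaller} than the stated $m^{(b-1)/2+\epsilon}$. The fact that the proposition cannot be improved to $m^{b/2-1+\epsilon}$ by the paper's method is itself a signal that the type-III multiplicity does not reduce to a compactly supported sum on a positive-definite sublattice: the extra $m^{1/2}$ in the correct exponent is exactly the cost of the non-compactly-supported Lorentzian sum.
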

This proposition will be proved in the following two sections. 

\subsection{Estimates on type II multiplicities}\label{s:esti-typeII}

The goal of this section is to prove the type II estimate in \Cref{p:growth-multi}. First we recall some notations associated to isotropic planes introduced in \cite[Section 3.2]{bruinierzemel}.

Let $\Upsilon=(P,\D^{\circ},h)$ be a cusp label representative corresponding to a boundary component of type II. Recall from \Cref{s:typeII} that $P$ is the stabilizer of an isotropic plane $J_\Q$ and $J=J_\Q\cap h.L$ is a primitive isotropic plane of $h.L\cap L_\Q$. 

To simplify the notations, assume that $h.L\cap L_\Q=L$, the reader may otherwise replace $L$ by $L_h=h.L\cap L_\Q$ in what follows. Define then:
\[J_{L^\vee}=J_\R\cap L^\vee,\quad J^{\bot}_L=J^\bot\cap L,\quad J^{\bot}_{L^\vee}=J^\bot\cap L^\vee,\quad \textrm{and}\quad D=J^{\bot}_L/J.\]

The lattice $D$ is positive definite lattice of rank $b-2$. Its dual lattice can be described as \[D^\vee=J^{\bot}_{L^\vee}/J_{L^\vee}.\]  and the discriminant lattice is given by: 
\[\Delta_D=D^\vee/D=J^{\bot}_{L^\vee}/(J^\bot_L+J_{L^{\vee}}=L_{J}^\vee/(L+J_{L^\vee}),\]
where $L^\vee_J$ is the subgroup of $L^{\vee}$ 
\[L+J^\bot_{L^\vee}=\{\mu\in L^\vee | \exists\nu\in L \,\textrm{such that}\, (\mu,\lambda)=(\nu,\lambda)\,\forall \lambda\in J \}.\]

Let $\Theta_D$ denote the vector-valued Theta function associated to $D$ defined by \[\Theta_D(\tau)=\sum_{\beta\in D^\vee} q^{Q(\beta)}e_{\beta+D}\in\C[\Delta_D][[q^{\frac{1}{|\Delta_D|}}]].\]

It is an element of $M_{\frac{b}{2}-1}(\rho_D)$, which is the space of vector-valued modular forms of weight $\frac{b}{2}-1$ with respect to the Weil representation $\rho_D$ associated to the positive definite lattice $(D,Q)$. We can  can also write
\begin{align*}
    \Theta_D(\tau)=\sum_{\beta\in D^\vee/D}\sum_{m\geq 0}c(D,\beta,m)q^m e_{\beta},
\end{align*}
where for $\beta\in D^{\vee}/D$, $m\in Q(\beta)+\Z$, $m\geq 0$, we have \[c(D,\beta,m)=|\{\lambda\in \beta+D, Q(\lambda)=m\}|.\]

Following Bruinier--Zemel's notations \cite[Section 4.4]{bruinierzemel}, define
\begin{align*}
\uparrow^L_D(\Theta_D)(\tau)&=\sum_{\beta\in J^\bot_{L^\vee}/J}q^{\frac{Q(\beta)}{2}}e_{\beta+L}\\
&=\sum_{\beta\in L^\vee/L}\sum_{m\in Q(\beta)+L} c(D,\beta,m)q^m e_{\beta}\in M_{\frac{b}{2}-1}(\rho_L),
\end{align*}
where $c(D,\beta,m)=0$ if $\beta\notin J^\bot_{L^\vee}/J^\bot$ or $m\notin Q(\beta)+\Z$, and otherwise $c(D,\beta,m)=c(D,\overline \beta,m)$ where $\overline \beta$ is the image of $\beta$ under the reduction map $J^{\bot}_{L\vee}\rightarrow D^{\vee}/D$.

In particular, we have 

    \begin{align*}
q\frac{d}{dq}\uparrow^L_D(\Theta_D)(\tau)=\sum_{\beta\in L^\vee/L}\sum_{m\in Q(\beta)+L} mc(D,\beta,m)q^m e_{\beta},
\end{align*}
which is a quasi-modular form in the sense of \cite[Definition 1]{imam}. 

Then by \cite[Definition 4.18, Proposition 4.21 4.15]{bruinierzemel}, we can define:  
\begin{align}\label{Eq:mult-typeII}
    \mu_\Upsilon(m)=\frac{1}{b-2}\mathrm{CT}\left( \langle q\frac{d}{dq}\uparrow^L_D(\Theta_D),F^{+}_m\rangle_L\right),
\end{align}
where $F^{+}_m$ is the holomorphic part of the Harmonic Mass form $F_{m,0}$ from \cite[Proposition 4.2]{bruinierzemel}.
A direct computation shows then (see also the second formula in \cite[Theorem 2.14]{bruinier})
\[ \mu_\Upsilon(m)=\frac{2}{b-2}mc(D,0,m).\]

Classical estimates on coefficients of modular forms, see for example \cite[Prop.1.5.5]{sarnak}, show that \begin{align}\label{equ:growth_D}
    |c(D,\beta,m)|\ll_\epsilon m^{\frac{b}{2}-2+\epsilon}
\end{align}
for all $\epsilon >0$.
Hence we get that 
\[|\mu_\Upsilon(m)|\ll_\epsilon m^{\frac{b}{2}-1+\epsilon},\]
which proves the first part of \Cref{p:growth-multi}.

\subsection{Estimates on type III multiplicities} 
In this section, we prove the estimates on the type III multiplicities in \Cref{p:growth-multi}. 
\medskip

Let $(\Xi,\sigma)$ be a toroidal stratum representative of type III such that $\sigma$ is a ray. Keeping the notations from \Cref{s:typeIII}, let $I_\Q$ be the isotropic line of $L_\Q$ whose stabilizer is the parabolic subgroup attached to $\Xi$ and let $I=I_\Q\cap h.L$. To simplify notations, we assume that $h.L=L$, the reader may notice that this is harmless, up to replacing $L$ by $h.L$ in what follows. 

The line $I$ is an isotropic line of $L$ and the lattice $K_I=I^{\bot}/I$ is Lorentzian. Let $C_\R$ be the cone of negative elements of the Lorentzian space $K_{I,\R}$ and let $C=C_\R\cap K$. As is explained in \Cref{s:typeIII}, the ray $\omega$ is generated by an element $\omega\in K_I\cap C$ which is primitive and such that $Q(\omega)=-N$. Following \cite[Definition 4.18]{bruinierzemel}, we define
\begin{align}\label{Eq:mult-typeIII}
    \mu_{\Xi,\omega}(m)=\frac{\sqrt{N}}{8\sqrt{2}\pi}\Phi^{K}_{m}\left(\frac{\omega}{\sqrt{N}}\right).
\end{align}
Let $v=\frac{\omega}{\sqrt{N}}$. By \cite[Proposition 2.11]{bruinier} and \cite[Theorem 2.14]{bruinier}, we have

\begin{align*}
    \Phi^{K}_{m}(v)&=\Phi^{K}_{m}(v,\frac{1}{2}+\frac{b}{4})\\
    &=\frac{2\Gamma(\frac{b-1}{2})(4\pi m)^{\frac{b}{2}}}{1+\frac{b}{2}}\sum_{\underset{Q(\lambda)=m}{\lambda\in K_I}}\frac{F(\frac{b-1}{2},1,1+\frac{b}{2};\frac{m}{q(Q(\lambda_{v^{\bot}})})}{(4\pi|Q(\lambda_{v^{\bot}})^{\frac{b-1}{2}}|)},
\end{align*}
where $F(a,b,c;z)$ is the usual Gauss hypergeometric function given by:
\[F(a,b,c;z)=\sum_{n=0}^{\infty}\frac{(a)_n(b)_n}{(c)_n}\frac{z^n}{n!},\]
and  $(a)_n=\Gamma(a+n)/\Gamma(a)$. Recall that the above series has $1$ as a radius of convergence and converges absolutely in the unit circle $|z|=1$ if $\mathcal{R}(c-a-b)>0$. In our situation, the latter quantity is equal to $1+b/2-1-\frac{b-1}{2}=1/2>0$. Hence the series $F(\frac{b-1}{2},1,1+\frac{b}{2};z)$ is globally bounded over the unit disc. For $\lambda\in K$ such that $Q(\lambda)=m$, we have $m=Q(\lambda_{v})+Q(\lambda_{v^{\bot}})$ and $Q(\lambda_{v})\leq 0$, hence $0<m\leq Q(\lambda_{v^{\bot}})$. Hence we get
\begin{align*}
    |\Phi^{K}_{m}(v)|&\ll \sqrt{m}.\sum_{\underset{Q(\lambda)=1}{\sqrt{m}\lambda\in K_I}}\frac{1}{Q(\lambda_{v^{\bot}})^{\frac{b-1}{2}}}\\
    &\ll\sqrt{m}\sum_{N\geq 1}\sum_{\underset{Q(\lambda)=1}{\underset{\sqrt{m}\lambda\in K_I}{ Q(\lambda_v^{\bot})\in[N,N+1[}}}\frac{1}{N^{\frac{b-1}{2}}}.\\
\end{align*}
By \Cref{p:quadratic-form} below, we have 
\[|\{\lambda\in K_{I,\R}, Q(\lambda)=1,\sqrt{m}\lambda\in K,Q(\lambda_v^{\bot})\in[N,N+1[\}|\ll_\epsilon m^{\frac{b}{2}-1+\epsilon}N^{\frac{b}{2}-2}.\]

Hence 
\begin{align*}
    |\Phi^{K}_{m}(v)|&\ll_\epsilon m^{\frac{b-1}{2}+\epsilon}\sum_{N\geq 1}\frac{N^{\frac{b}{2}-2}}{{N^{\frac{b-1}{2}}}}\\
    &\ll_\epsilon m^{\frac{b-1}{2}+\epsilon}\sum_{N\geq 1}\frac{1}{N^{3/2}}.\\
    &\ll_\epsilon m^{\frac{b-1}{2}+\epsilon}.
\end{align*}
which proves the second part of \cref{p:growth-multi}.

\begin{proposition}\label{p:quadratic-form}
Let $m\geq 1$ be an integer and $X>0$ a positive real number. Then
\[|\{\lambda\in K_{I,\R}, Q(\lambda)=1,\sqrt{m}\lambda\in K_I,Q(\lambda_v^{\bot})\in[N,N+1[\}|\ll_\epsilon m^{\frac{b}{2}-1+\epsilon}N^{\frac{b}{2}-2} \]
\end{proposition}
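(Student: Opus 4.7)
The plan is to reduce the count to a lattice-point count in a thin spherical shell of a positive-definite quadratic lattice of rank $b-2$, and then invoke standard representation-number bounds for positive-definite quadratic forms.

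Set $\mu := \sqrt{m}\,\lambda \in K_I$ and $N_0 := -Q(\omega)$. The problem becomes one of counting $\mu \in K_I$ with $Q(\mu)=m$ and $Q(\mu_{v^{\bot}}) \in [mN,\, m(N+1))$. Using the orthogonal decomposition $K_{I,\R}=\R\omega \oplus \omega^{\bot}$, I would write $\mu = p + s\omega$ with $p=\mu_{v^{\bot}}\in \omega^{\bot}\otimes\R$ and $s=-(\mu,\omega)/(2N_0)\in \tfrac{1}{2N_0}\Z$. Since $2N_0 p = 2N_0 \mu + (\mu,\omega)\omega$ lies in $K_I \cap (\omega^{\bot} \otimes \R)$, the projection $p$ takes values in the rank-$(b-2)$ positive-definite lattice $\widetilde\Lambda := \tfrac{1}{2N_0}(K_I \cap \omega^{\bot})$.

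The key reduction is that the map $\mu \mapsto p$ is at most $2$-to-$1$ on the locus $Q(\mu)=m$: the identity $Q(\mu)=Q(p)-s^2 N_0 = m$ forces $s = \pm\sqrt{(Q(p)-m)/N_0}$, so $\mu = p + s\omega$ is determined by $p$ up to the sign of $s$. It therefore suffices to bound
\[
\#\{\, p \in \widetilde\Lambda \,:\, Q(p) \in [mN,\, m(N+1))\,\}.
\]
For this, I would apply the divisor-type bound $r_Q(n)\ll_\epsilon n^{(b-4)/2+\epsilon}$ (valid when $b-2 \geq 3$) on the number of representations of an integer $n$ by the positive-definite form on $\widetilde\Lambda$, a consequence of the Eisenstein-plus-cusp-form decomposition of the associated theta series. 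Since $(2N_0)^2 Q(p) \in \Z$, the shell contains $O(m)$ admissible integer values of $(2N_0)^2 Q(p)$; summing yields
\[
\#\{p\} \ \ll_\epsilon \ m \cdot (mN)^{(b-4)/2+\epsilon} \ =\ m^{b/2-1+\epsilon}\, N^{b/2-2+\epsilon},
\]
and the factor $N^\epsilon$ is harmless in the global application, where the weighted sum $\sum_N N^{b/2-2+\epsilon}/N^{(b-1)/2}$ still converges for small $\epsilon$.

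The main obstacle will be the low-rank edge cases $b\in\{3,4\}$, where the above representation bound degenerates: for $b=4$ (rank $2$) one falls back on the elementary divisor bound $r_Q(n)\ll n^\epsilon$, while for $b=3$ (rank $1$) one must directly count the integers $k\in\Z$ with $k^2 c \in [mN,m(N+1))$ on the $1$-dimensional lattice $\widetilde\Lambda$. In both cases, a hands-on lattice-point argument in dimension at most $2$ recovers the stated estimate.
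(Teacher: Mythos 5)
Your reduction has a rank error that makes the bound come out too weak. The lattice $\widetilde\Lambda = \tfrac{1}{2N_0}(K_I\cap\omega^\bot)$ has rank $b-1$, not $b-2$: the lattice $K_I=I^\bot/I$ has signature $(b-1,1)$ and hence rank $b$, so the hyperplane $\omega^\bot$ cuts out a positive-definite sublattice of rank $b-1$. (You may be conflating this with the type II lattice $D=J^\bot_L/J$, which does have rank $b-2$ because $J$ is a plane.) With the correct rank, the representation-number bound is $r_{\widetilde\Lambda}(n)\ll_\epsilon n^{(b-1)/2-1+\epsilon}=n^{(b-3)/2+\epsilon}$, and your argument yields only $m\cdot(mN)^{(b-3)/2+\epsilon}=m^{(b-1)/2+\epsilon}N^{(b-3)/2+\epsilon}$, which overshoots the target $m^{b/2-1+\epsilon}N^{b/2-2}$ by a factor of $\sqrt{mN}$. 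This is not a bookkeeping slip but reflects a genuine loss in the reduction: bounding $\#\{\mu\}$ by $2\#\{p\in\widetilde\Lambda: Q(p)\in[mN,m(N+1))\}$ discards the arithmetic constraint that $Q(p)-m$ must equal $N_0 s^2$ for $s\in\tfrac{1}{2N_0}\Z$, i.e., $4N_0(Q(p)-m)$ must be a perfect square. Only about a $1/\sqrt{mN}$ proportion of values of $Q(p)$ near $mN$ satisfy this, which is exactly the missing factor; indeed the shell $\{p\in\widetilde\Lambda:Q(p)\in[mN,m(N+1))\}$ has volume $\asymp m^{(b-1)/2}N^{(b-3)/2}$, so your upper bound is essentially sharp for the $p$-count and the loss is intrinsic.

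The paper avoids this by not decoupling the coordinates: it works directly with the (scaled) Lorentzian quadric $K_1=\{Q=1\}$ in $K_{I,\R}$, invokes equidistribution of integral points on quadrics (Duke, Eskin--Oh, or the circle method) to bound $\#\{\lambda\in K_1:\sqrt{m}\lambda\in K_I,\,\lambda\in W\}$ by $m^{b/2-1+\epsilon}\mu_\infty(W)$, and then computes $\mu_\infty$ of the shell $\{Q(\lambda_{v^\bot})\in[N,N+1)\}$ to be $\ll N^{b/2-2}$. The equidistribution statement is precisely where the square constraint on $Q(p)-m$ gets accounted for. To salvage your approach you would need to count representations of $m$ by the indefinite binary-plus-definite form $Q(p)-N_0 s^2$ with the shell constraint on $Q(p)$, which is essentially the Lorentzian representation problem the paper solves via equidistribution. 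Your acknowledged $N^\epsilon$ loss would indeed have been harmless, and the edge cases $b\in\{3,4\}$ are worth flagging, but the rank miscount is the real issue and needs to be fixed before any of that matters.
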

\begin{proof}
Recall that $(K_I,Q)$ is a quadratic lattice of signature $(b-1,1)$ and we have a canonical measure $\mu_\infty$ on the quadric $K_1:=\{x\in K_{I,\R}| Q(x)=1\}$ defined as follows: for $W$ an open subset of $K_{\R}$, let 
\[\mu_{\infty}(W\cap K_1)=\lim_{\epsilon \rightarrow 0}\frac{\mathrm{Leb}\left(\{x\in W,\, |Q(x)-1|<\epsilon\}\right)}{2\epsilon}.\]
Here $\mathrm{Leb}$ is the Lebesgue measure on $K_{\R}$ for which the lattice $K$ is of covolume $1$.
One can then prove that (see for example the proof of \cite[Corollary 4.12]{sstt}):
\[\mu_{\infty}(\{\lambda\in K_1, Q(\lambda_{v^{\bot}})\in[X,X+1[\})\ll X ^{\frac{b}{2}-2}.\]
On the other hand, by the equidistribution of integral points in quadrics, see \cite{eskinoh,duke-invetiones-hyperbolic}, \footnote{Or the circle method.} we have: 
\[|\{\lambda\in K_1, \sqrt{m}\lambda\in K,Q(\lambda_v^{\bot})\in[N,N+1[\}|\ll_\epsilon  m^{\frac{b}{2}-1+\epsilon}\mu_{\infty}(\{\lambda\in K_1, Q(\lambda_{v^{\bot}})\in[X,X+1[\}),\]

which yields the desired result. 
\end{proof}

\section{Bounding the contribution from bad reduction places}
In this section we prove \Cref{p:finite-place} and \Cref{t:local-bad-ff}. Let $\mathcal{M}^{\Sigma}$ be as before the toroidal compactification of the GSpin Shimura variety associated to a quadratic lattice $(L,Q)$ and a $K$-admissible polyhedral cone decomposition $\Sigma$. The lattice $(L,Q)$ is assumed to be maximal in the number field case and moreover self-dual at $p$ in the function field case.

\subsection{Bad reduction in the number field setting}
In this section, we prove \Cref{p:finite-place}. We assume hence that the lattice $(L,Q)$ is maximal and that the polyhedral cone decomposition $\Sigma$ is chosen in such way that \Cref{choice-toroidal-nf} is satisfied. 
\medskip 

By the choice of the cone decomposition $\Sigma$, the intersection points of $\cY$ and $\mathcal{M}^{\Sigma}$ lie either in a boundary divisor of type II or a boundary divisor of type III associated to a toroidal stratum representative $(\Xi,\sigma)$ of type III where $\sigma$ is a ray.   

Let $\mathfrak{P}$ be a prime of bad reduction, i.e., where $\mathcal{Y}$ intersects the boundary of $\mathcal{M}^{\Sigma}$. Let $K_\mathfrak{P}$ be the completion at $\mathfrak{P}$ of the number field $K$ and $v_\mathfrak{P}$ its normalized valuation. Let $k_\mathfrak{P}$ be the residue field of $\mathfrak{P}$ and $\overline{k}_\mathfrak{P}$ an algebraic closure.

\subsubsection{Type II degeneration}\label{s:typeII-bad-nf}
Assume in this section that the boundary point lies in $\cB^{\Upsilon}_{\mathbb{F}_p}$ where $\Upsilon$ is a cusp label representative of type II. 

Let $J$ be the primitive isotropic plane associated to $\Upsilon$ and let $D=J_L^{\bot}/J$, see \Cref{s:esti-typeII} for notations.

Recall from \Cref{eq:iso-completions} and \Cref{diagram-typeII} that the completion of $\mathcal{M}^{\Sigma}$ along the boundary divisor $\cB^{\Upsilon}$ fits into the following commutative diagram 
\[
\xymatrix{
  \widehat{\mathcal{M}}_\Upsilon\ar[rr]^{\pi} \ar[d]^{=} && \widehat{\mathcal{M}^{\Sigma}}  
&& \\ 
\widehat{\mathcal{M}}_\Upsilon\ar[rr]^{\mathrm{Spf}(\Z_p[[X]])} && \overline{\mathcal{M}}_{\Upsilon}\ar[rr]^{D\otimes \mathcal{E}}&& \mathcal{M}^{h}_{\Upsilon}
}
\]
where the map $\pi$ is an \'etale map of formal Deligne--Mumford stacks. 
\medskip

The formal completion of $\cY$ along $\mathfrak{P}$ induces a map 
\[\mathrm{Spf}(\mathcal{O}_{K_{\mathfrak{P}}})\rightarrow\widehat{\mathcal{M}^{\Sigma}}, \]
which lifts by \'etaleness of $\pi$ to a map
\[\mathrm{Spf}(\mathcal{O}_{K_{\mathfrak{P}}})\rightarrow\widehat{\mathcal{M}}_\Upsilon.\] 
Denoting by $x$ the image of the closed point $s_{\mathfrak{P}}$, then we get a map of local rings
\[\Psi:\widehat{\mathcal{O}}_{\widehat{\mathcal{M}}_\Upsilon,x}\rightarrow \mathcal{O}_{K_{\mathfrak{P}}}.\]

Let $m \geq 1$ be an integer coprime to $N$. By \Cref{ss:dia-sp-II}, the formal completion of the divisor $\mathcal{Z}(m)$ is  described  as the union over $\lambda\in D$ with $Q(\lambda)=m$, of the vanishing set of the ideals $\Z_p[X]]\otimes \widehat{I}_\lambda$. If $f_\lambda$ is a generator of $\widehat{I}_\lambda$\footnote{Recall that $\cZ(m)$ is Cartier.}, then the multiplicity of intersection of the branch parameterized by $\lambda$ at $\mathfrak{P}$ is equal to 
\[v(\lambda)=v_\mathfrak{P}(\Psi(f_\lambda)).\]
Hence the multiplicity of intersection of $\cY$ and $\cZ(m)$ at $\mathfrak{P}$ is given by: 
\[(\cY.\cZ(m))_{\mathfrak{P}}=\frac{1}{d}\sum_{\underset{Q(\lambda)=m}{\lambda\in D}}v(\lambda),\]
where $d$ is the degree of $\pi$ at $\rho(s_{\mathfrak{P}})$.

For an integer $n$, define the set: 
\[L_n=\{\lambda\in D| v(\lambda)\geq n\},\]
and notice that $(L_n)$ is a decreasing chain of sets. 
It follows then
\begin{align}
    (\cY.\cZ(m))_{\mathfrak{P}}&\leq\sum_{\underset{Q(\lambda)=m}{\lambda\in D}}v(\lambda)\\
    \leq &\sum_{n\geq 1}|\{\lambda\in L_n| Q(\lambda)=m\}|.
\end{align}

The Proposition below should be compared to what happens in the good reduction case in \cite[Section 7]{sstt}. For a definition of the successive minima used, we refer to \cite[Definition 2.2]{EK}.
\begin{proposition}\label{p:lattices-typeII}
The sequence $(L_n,Q)_n$ is a decreasing sequence of  positive definite lattices which all have the same rank $r\leq b-2$. Moreover, the following holds: 
\begin{enumerate}
    \item $\cap_n L_n=\{0\}$.
    \item For every $n\geq 1$, $pL_{n}\subseteq L_{n+1}$.
    \item For $1\leq r\leq b-2$, let $\mu_i(L_n)$ be the $i^{th}$ successive minima of $L_n$ and let $a_i(L_n)=\prod_{1\leq k\leq i}\mu_i(L_n)$. Then we have \[a_i(L_n)\gg_\epsilon n^{\frac{i}{b+\epsilon}}.\]
\end{enumerate}
\end{proposition}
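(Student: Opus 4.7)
The plan is to establish the three parts in order: (1) from the genericity hypothesis, (2) from the formal group law of the Kuga-Sato elliptic curve, and (3) by combining the Diophantine bound of \Cref{Diopbound} with an elementary geometry-of-numbers argument. For part (1), suppose $\lambda\in\bigcap_n L_n$, so that $v(\lambda)=+\infty$ and hence $\Psi(f_\lambda)=0$; unraveling the local description in \Cref{ss:dia-sp-II}, this forces the image of $\cY$ in $\widehat{\cM}^\Sigma$ to lie in a branch of $\widehat{\cZ}(Q(\lambda))$, so $\cY_K\subseteq \cZ(Q(\lambda))$. The hypothesis of \Cref{t:main_sp_end} then gives $Q(\lambda)=0$, and positive definiteness of $(D,Q)$ gives $\lambda=0$.

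For part (2), I will argue that $f_{p\lambda}$ and $[p]_{\mathcal E}(f_\lambda)$ generate the same ideal in $\widehat{\mathcal O}_{\widehat{\cM}_\Upsilon,x}$, where $[p]_{\mathcal E}$ denotes multiplication by $p$ in the formal group law of the universal elliptic curve $\mathcal E$. This is because the group-scheme homomorphism $\pi_\lambda:D\otimes \mathcal E\to \mathcal E$ of \Cref{kernel} is $\Z$-linear in $\lambda$, so $\pi_{p\lambda}=[p]_{\mathcal E}\circ \pi_\lambda$, and pulling back a parameter at the origin transforms by $[p]_{\mathcal E}$. Setting $u:=\Psi(f_\lambda)$ and using the universal expansion $[p]_{\mathcal E}(u)=pu+O(u^2)$ together with $v_\fP(p)\geq 1$,
\[v_\fP\bigl([p]_{\mathcal E}(u)\bigr)\geq \min\bigl(v_\fP(p)+v_\fP(u),\,2v_\fP(u)\bigr)\geq n+1\]
whenever $\lambda\in L_n$, so $p\lambda\in L_{n+1}$. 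Injectivity of multiplication by $p$ on $D\otimes\Q$ then implies $\rank(L_n)$ is a constant $r\leq b-2$.

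For part (3), combining \Cref{Diopbound} with the local formula $(\cY.\cZ(m))_\fP=\frac{1}{d}\sum_{\lambda\in D,\,Q(\lambda)=m} v(\lambda)$ and the bound $v(\lambda)\geq n$ on $L_n$ yields
\[\bigl|\{\lambda\in L_n : Q(\lambda)=m\}\bigr|\leq C\,\frac{m^{b/2}\log m}{n}.\]
Applied to the shortest nonzero $\lambda_1\in L_n$, with $m=\mu_1(L_n)^2$, the left side is $\geq 1$, forcing $m^{b/2}\log m\gg n$ and hence $\mu_1(L_n)\gg (n/\log n)^{1/b}$ by asymptotic inversion. Since $\mu_j(L_n)\geq \mu_1(L_n)$ for all $j$, I conclude
\[a_i(L_n)\geq \mu_1(L_n)^i\gg (n/\log n)^{i/b}\gg_\epsilon n^{i/(b+\epsilon)}\]
for every $\epsilon>0$ and $n$ sufficiently large.

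The main obstacle is the rigorous treatment of part (2) at the integral level: since $p$ is not a unit in $\mathcal O_{K_\fP}$ and the generator $f_\lambda$ of $\widehat I_\lambda$ is defined only up to a unit, the formal-group identity $\widehat I_{p\lambda}=([p]_{\mathcal E}(f_\lambda))$ must be transported through the trivialization of the torsor $\overline{\cM}_\Upsilon\to \cM_\Upsilon^h$ and through the étaleness of the cover in \Cref{diagram-typeII}, all while keeping the ideals $\widehat I_\lambda$ of \Cref{ss:dia-sp-II} under control.
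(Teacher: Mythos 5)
Your proof follows essentially the same route as the paper's: part (1) from the genericity hypothesis, part (2) via the formal group law of the universal elliptic curve, and part (3) from the Diophantine bound applied to the length of the shortest vector. Your treatment of part (2) is if anything slightly more careful than the paper's, which asserts $v(p\lambda)\geq p\,v(\lambda)$ by invoking the ramification degree of $[p]$ at the origin — a bound that, in the mixed-characteristic setting of $\mathcal O_{K_\fP}$, is dominated by the $pT$ term of $[p](T)$ once $v_\fP(u)$ is large; your weaker bound $v_\fP([p](u))\geq\min(v_\fP(p)+v_\fP(u),2v_\fP(u))\geq n+1$ is what is actually needed and is robust. Your part (3) is the same geometry-of-numbers computation (the paper directly writes $n\leq(\cY.\cZ(m_0))_\fP\ll_\epsilon m_0^{(b+\epsilon)/2}$ for $m_0=\mu_1(L_n)^2$; you insert the counting bound as an intermediate step, which amounts to the same thing after absorbing $\log m$ into $m^\epsilon$).

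There is one genuine omission: the statement begins by asserting that each $(L_n,Q)$ is a \emph{lattice}, i.e.\ a subgroup of $D$, and you never verify this. It is not automatic from the definition $L_n=\{\lambda\in D\mid v(\lambda)\geq n\}$. The paper proves it by observing that $\lambda\mapsto\pi_\lambda$ is additive, so $\ker(\pi_\lambda)\cap\ker(\pi_{\lambda'})\subseteq\ker(\pi_{\lambda+\lambda'})$, which at the level of ideals gives $\widehat I_{\lambda+\lambda'}\subseteq\widehat I_\lambda+\widehat I_{\lambda'}$ and hence $v(\lambda+\lambda')\geq\min(v(\lambda),v(\lambda'))$. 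You do invoke the $\Z$-linearity of $\pi_\lambda$ in $\lambda$ to justify $\pi_{p\lambda}=[p]_{\mathcal E}\circ\pi_\lambda$, so the tool you need is already in hand, but you should explicitly apply it to $\lambda+\lambda'$ to establish the ultrametric inequality for $v$ and thereby that $L_n$ is a subgroup. Without this, parts (2) and (3) (ranks, successive minima, the geometry-of-numbers count) do not even typecheck. Positive-definiteness of $(L_n,Q)$ then follows from $L_n\subseteq D$, and $r\leq b-2$ likewise; both are trivial once the subgroup property is in place, but should be stated.
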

\begin{proof}
Let $\lambda,\lambda'\in L_n$. From \Cref{kernel}, we see that
$\mathrm{ker}(p_\lambda)\cap \mathrm{ker}(p_{\lambda'})$ and thus 
\[\widehat{I}_{\lambda+\lambda}\subset \widehat{I_\lambda}+ \widehat{I_{\lambda'}}.\]

It follows that \[v(\lambda+\lambda')\geq \min\{v(\lambda),v(\lambda')\}\geq n.\] We conclude that $L_n\subseteq D$ is a subgroup and $(L_n,Q)$ is obviously positive definite. Moreover, since the curve $\cY$ is not contained in any special divisor, $(1)$ follows immediately. 

For (2), let $\lambda\in L_n$ with $v(\lambda)\geq n\geq 1$.  Then $\widehat{I}_{p\lambda}$ is the ideal defining the kernel of the composition 
\[D\otimes\widehat{\mathcal{E}}\rightarrow \widehat{\mathcal{E}}\rightarrow \widehat{\mathcal{E}},\]
over $\mathrm{Spf}(\widehat{\mathcal{O}}_{\mathcal{M}^h,z})$.

Since the multiplication by $p$ map is ramified at $0$ with ramification degree equal to $p$, we conclude that 
\begin{align*}
    v(p\lambda)&\geq pv(\lambda)\\
    &\geq n+1.
\end{align*}
This also proves that the lattices $L_n$ have the same rank.

For (3), let $n\geq 1$ and let $w_0$ be a vector in $L_n$ such that $Q(w_0)=\mu_1(L_n)^2$. By choosing $m_0=\mu_1(L_n)^2$, the height bound \Cref{Diopbound} implies 
\[n\leq (\cY.\cZ(m_0))_\mathfrak{P}\ll_\epsilon m^{\frac{b+\epsilon}{2}}.\]
Hence $\mu_1(L_n)\gg_\epsilon n^{\frac{1}{b+\epsilon}}$. Since $a_i(n)\geq \mu_1(n)^r$, this concludes the proof.
\end{proof}
\begin{proposition}\label{p:lattice-estimate}
Let $D\in \Z_{\geq 1}$. For $X\in \Z_{>0}$, let $S_{D,X}$ denote the set \[\{m\in \Z_{>0}\mid X \leq m<2X,\, \frac{m}{D}\in \Z \cap (\Q^\times)^2,\, (m,N)=1\}.\] Then we have
\[\sum_{m\in S_{D,X}}(\cY . \cZ(m))_\fP=o(X^{\frac{b+1}{2}}\log X).\]
\end{proposition}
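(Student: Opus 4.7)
The plan is to bound the sum via a layer-cake decomposition of the local intersection multiplicity combined with a geometry-of-numbers estimate. From the discussion preceding \Cref{p:lattices-typeII}, we have
\[
(\cY.\cZ(m))_\fP \leq \sum_{n \geq 1} |\{\lambda \in L_n : Q(\lambda) = m\}|,
\]
and summing over $m \in S_{D,X}$ and exchanging the order of summation yields
\[
\sum_{m \in S_{D,X}}(\cY.\cZ(m))_\fP \leq \sum_{n \geq 1} |\{\lambda \in L_n : Q(\lambda) \in [X,2X)\}|.
\]
The task then reduces to counting lattice vectors of controlled $Q$-norm in each $L_n$.

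For a fixed $n$, I would invoke the standard geometry-of-numbers estimate: if $r \leq b-2$ is the common rank of the $L_n$ and $a_j(L_n) = \mu_1(L_n) \cdots \mu_j(L_n)$, then
\[
|\{\lambda \in L_n : Q(\lambda) < 2X\}| \ll_r \sum_{j=0}^{r} \frac{X^{j/2}}{a_j(L_n)}.
\]
\Cref{p:lattices-typeII}(3) gives $a_j(L_n) \gg_\epsilon n^{j/(b+\epsilon)}$, while part (1) together with \Cref{Diopbound} implies that $L_n$ contains no nonzero vector of $Q$-norm $\leq 2X$ once $\mu_1(L_n)^2 > 2X$, i.e.\ once $n \gg X^{(b+\epsilon)/2}$. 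Hence the sum over $n$ is effectively truncated at $N_0 := c\, X^{(b+\epsilon)/2}$.

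Assembling these bounds,
\[
\sum_{m \in S_{D,X}}(\cY.\cZ(m))_\fP \ll_\epsilon \sum_{j=0}^{r} X^{j/2} \sum_{n=1}^{N_0} n^{-j/(b+\epsilon)}.
\]
Since $r \leq b-2 < b+\epsilon$, for every $j \in \{1,\dots,r\}$ the exponent $j/(b+\epsilon)$ is strictly less than $1$, so $\sum_{n \leq N_0} n^{-j/(b+\epsilon)} \ll N_0^{1-j/(b+\epsilon)}$. Substituting $N_0 \asymp X^{(b+\epsilon)/2}$, each $j \geq 1$ contributes $X^{j/2} \cdot X^{(b+\epsilon)/2 - j/2} = X^{(b+\epsilon)/2}$, while the $j=0$ term contributes $N_0 \asymp X^{(b+\epsilon)/2}$. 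Summing the $r+1$ terms gives the total bound $O(X^{(b+\epsilon)/2})$, which for any $\epsilon < 1$ is $o(X^{(b+1)/2}\log X)$.

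The main technical point is the calibration of the cutoff $N_0$ against \Cref{p:lattices-typeII}(3): the successive-minima lower bound is precisely strong enough that all $r+1$ terms of the geometric series balance at the critical scale $X^{(b+\epsilon)/2}$, comfortably below the target exponent $(b+1)/2$. I note that the arithmetic constraint $m/D \in (\Q^\times)^2$ defining $S_{D,X}$ is actually discarded in this argument, since the geometric count already absorbs it; the same estimate applies verbatim to the larger sum over all $m \in [X,2X)$ with $(m,N)=1$.
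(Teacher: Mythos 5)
Your proof is correct and follows essentially the same route as the paper's: the layer-cake decomposition $(\cY.\cZ(m))_\fP \leq \sum_n |\{\lambda\in L_n : Q(\lambda)=m\}|$, the geometry-of-numbers bound in terms of the $a_j(L_n)$ (the paper cites \cite[Lemma 2.4]{EK} for this step), the truncation of the $n$-sum at $N_0 \asymp X^{(b+\epsilon)/2}$ justified by \Cref{Diopbound} together with \Cref{p:lattices-typeII}(3), and the final geometric-series balancing. Your closing observation that the congruence/square constraints on $S_{D,X}$ are not used in the bound is also implicit in the paper's argument.
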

\begin{proof}
We have 
\begin{align*}\sum_{m\in S_{D,X}}(\cY.\cZ(m))_\fP&\leq\sum_{m\in S_{D,X}}\sum_{n\geq1}|\{\lambda\in L_n|Q(\lambda)=m\}|\\
&=\sum_{n\geq1}\sum_{m\in S_{D,X}}|\{\lambda\in L_n|Q(\lambda)=m\}|.
\end{align*}
By \cite[Lemma 2.4]{EK}, we have the following estimate which only depends on the rank $r$ of the lattices $L_n$ and hence not on $n$
\[\sum_{m\in S_{D,X}}|\{\lambda\in L_n|Q(\lambda)=m\}|\ll \sum_{j=0}^{r}\frac{X^{j}}{a_j(L_n)}.\]
On the other hand, if $\lambda\in L_n$ with $Q(\lambda)=m\in S_{D,X}$, then $\mu_1(L_n)^2\leq m\leq X$, hence $n\ll X^{\frac{b+\epsilon}{2}}$ and
\begin{align*}
    \sum_{m\in S_{D,X}}(\cY.\cZ(m))_\fP&\ll\sum_{m\in S_{D,X}}\sum^{O_\epsilon(X^{\frac{b+\epsilon}{2}})}_{n\geq1}|\{\lambda\in L_n|Q(\lambda)=m\}|\\
&\ll\sum_{j=0}^{r}\sum_{n\geq1}^{X^{\frac{b+\epsilon}{2}}}  \frac{X^{\frac{j}{2}}}{n^{\frac{j}{b+\epsilon}}}\\
 &\ll\sum_{j=0}^{r} X^{\frac{j}{2}+(1-\frac{j}{b+\epsilon})\frac{b+\epsilon}{2}}\\
&=O(X^{\frac{b+\epsilon}{2}}).
\end{align*}
Hence the result. 
\end{proof}
\subsubsection{Type III degeneration}\label{s:typeIII-bad-red}
Let $(\Xi,\sigma)$ be a toroidal stratum representative of type III such that $\sigma$ is a ray. We use notations from \Cref{s:typeIII}.
\medskip

By our choice of $\Sigma$, the  curve $\cY$ touches the boundary of $\mathcal{M}^{\Sigma}$  at a locally closed boundary divisor $\cB^{\Xi,\sigma}$. Let $\widehat{M}^{\Sigma}$ be the formal completion of $\mathcal{M}^{\Sigma}$ along $\cB^{\Xi,\sigma}$ and hence we get a map 
\begin{align}\label{dia-formal-tori}
    \widehat{\cY}\rightarrow \widehat{M}^{\Sigma}.
\end{align}
By \Cref{ss:fomral-tIII}, the following maps of formal Deligne--Mumford stacks are finite \'etale: 

\[ \bigsqcup_{\Q^{\times}_{>0}\backslash \mathbb{A}_f^{\times}/K_0} \widehat{\mathcal{T}}_{\Xi/R}\rightarrow\widehat{\mathcal{M}}_{\Xi,\sigma}\rightarrow \widehat{\mathcal{M}^{\Sigma}}.\]

Hence map \ref{dia-formal-tori} lifts to map 
\[\widehat{\cY}\rightarrow\mathrm{Spf}\left(\Z_p[q_\alpha|\alpha\in\Gamma_\Xi^{\vee}\cap \omega^{\bot}][[q_{\omega'}]]\right).\]

%formal completion of $\mathcal{M}^{\Sigma}$ along $\cB_{\Xi,\sigma}$ can be described in \'etale local coordinates as a union of formal tori: 
%\[\mathrm{Spf}\left(\Z_{(p)}[[q_{\omega'}]][q_\alpha]_{\underset{(\alpha.\omega)=0}{\alpha\in\Gamma_\Xi^{\vee}}}\right).\]

%Let $R$ be the local ring at $s_0$. Then by \cite{}, there exists a finite group $\Delta$ such that $R$ is isomorphic to the quotient
%\[R=\simeq \left[W(k)[[q^{\lambda}, (\lambda,C)>0]]\right]^{\Delta}.\]

%Any element $\lambda\in K\cap \ker(N)$ has a decomposition $\lambda=\lambda^+-\lambda^{-}$ such that $\lambda+$ and $\lambda^{-}$ have nonnegative intersection with $C$ \salim{check unicity and integrality issues}.

%Then for $m>0$, the special divisor $\cZ(m)$ has equation \[q^{\lambda^+}-q^{\lambda^{-}}\]

This corresponds to a morphism: 
\begin{align}\label{eq:typeIII-nf}
\Z_{(p)}[[q_{\omega'}]][q_\alpha]_{\underset{(\alpha.\omega)=0}{\alpha\in\Gamma_\Xi^{\vee}}}\rightarrow \mathcal{O}_{K_{\mathfrak{P}}}.
\end{align}

Let $\lambda\in K_I=\Gamma_\Xi$ with $Q(\lambda)=m$. By \Cref{ss:special-tIII} the branch of the special divisor $\cZ(m)$ parameterized by $\lambda$ intersects the boundary only if $ (\lambda.\omega)=0$. In the latter case, by \Cref{equation-typeIII},  its equation is given by $q^{\lambda}-1$ and the multiplicity of intersection of $\cY$ with the branch given by $\lambda$ is the $p$-adic valuation of the element $q^{\lambda}-1$ under the map \Cref{eq:typeIII-nf}.

Let $x\in \cB^{\Xi,\sigma}(\overline{\F}_p)$ be the image of $\mathfrak{P}$. Then by the previous discussion, we conclude that \[(\cY.\cZ(m))_\mathfrak{P}=\frac{1}{d}\sum_{\underset{Q(\lambda)=m}{\lambda\in K_I\cap \omega^{\bot}}}v_p(q^{\lambda}-1),\]
where $d$ is the degree of the map \Cref{dia-formal-tori} at $x$.

For $n\geq 1$, let 
\[L_n=\{\lambda \in K_I\cap \omega^{\bot}| v_p(q^{\lambda}-1)\geq n\}.\]

Then we can rewrite the multiplicity intersection at $\mathfrak{P}$ as: 
\[(\cY.\cZ(m))_{\fP}=\frac{1}{d}\sum_{n\geq 1}\{\lambda\in L_n| Q(\lambda)=m\}.\] 
\begin{proposition}\label{p:lattice-decayIII}
The lattices $(L_n,Q)$ are positive definite lattices of rank $r\leq b-1$ independent from $n$ and they satisfy the following properties: 
\begin{enumerate}
    \item $\cap_n L_n=\{0\}$.
    \item For every $n\geq 1$, $pL_{n}\subseteq L_{n+1}$.
    \item For  $1\leq r\leq b-1$, let $\mu_i(L_n)$ be the $i^{th}$ successive minima and let $a_i(L_n)=\prod_{1\leq k\leq i}\mu_i(L_n)$. Then we have \[a_i(L_n)\gg_\epsilon n^{\frac{i}{b+\epsilon}}.\]
\end{enumerate}
\end{proposition}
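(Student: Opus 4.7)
The proof will closely parallel the type~II analog \Cref{p:lattices-typeII}, with the key structural difference that the relevant formal group at the boundary is now the formal multiplicative group $\widehat{\mathbb{G}}_m$ (since we are degenerating into a toric stratum) rather than the formal group of an abelian variety. First, to see that $L_n$ is a subgroup and that $Q|_{L_n}$ is positive definite, I would note that $K_I$ has signature $(b-1,1)$ and $Q(\omega)=-N<0$, so the orthogonal complement $K_{I,\R}\cap\omega^{\perp}$ is positive definite of rank $b-1$. For closure, the image of $q^{\lambda}$ in $\mathcal{O}_{K_{\fP}}$ is a unit (with inverse the image of $q^{-\lambda}$), so the identity
\[
q^{\lambda+\lambda'}-1 \;=\; q^{\lambda}\bigl(q^{\lambda'}-1\bigr) + \bigl(q^{\lambda}-1\bigr)
\]
together with $q^{-\lambda}-1 = -q^{-\lambda}(q^\lambda-1)$ shows that $v_p$ of the left-hand side is at least $\min\{v_p(q^\lambda-1),v_p(q^{\lambda'}-1)\}\geq n$.

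For part (1), if some nonzero $\lambda$ lay in every $L_n$ then $q^{\lambda}-1$ would vanish in $\mathcal{O}_{K_{\fP}}$, meaning the map from $\widehat{\cY}$ factors through the branch of $\cZ(Q(\lambda))$ parameterized by $\lambda$ (via \Cref{equation-typeIII}). Since $\cY$ is flat over $\cO_K$, this forces $\cY_K$ into $\cZ(Q(\lambda))$, contradicting the hypothesis of \Cref{t:main_sp_end}. This also forces the rank of $L_n$ to stabilize once combined with part (2), since the chain $L_n\supseteq L_{n+1}\supseteq pL_n$ shows $L_n$ and $L_{n+1}$ have the same rational span.

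For part (2), the multiplication-by-$p$ map on the torus is $q^{\alpha}\mapsto (q^{\alpha})^p$. Writing $q^{\lambda}=1+t$ with $v_p(t)\geq n\geq 1$, the binomial expansion gives
\[
q^{p\lambda}-1 \;=\; (1+t)^p - 1 \;=\; pt+\binom{p}{2}t^2+\cdots+t^p,
\]
and each summand has $v_p$ at least $n+1$: for $1\leq k\leq p-1$ the binomial coefficient contributes $v_p(p)\geq 1$ (ramification of $K_\fP/\Q_p$ only helps), and for $k=p$ we get $pn\geq n+1$ since $n\geq 1$. Hence $p\lambda\in L_{n+1}$.

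For part (3), the argument is the same as in the type~II case: let $w_0\in L_n$ be a shortest vector and set $m_0=Q(w_0)=\mu_1(L_n)^2$, which is coprime to $N$ after possibly enlarging the argument (or else the bound is vacuous). The diophantine bound of \Cref{Diopbound} gives
\[
n \;\leq\; (\cY.\cZ(m_0))_{\fP} \;\ll_{\epsilon}\; m_0^{\frac{b+\epsilon}{2}} \;=\; \mu_1(L_n)^{b+\epsilon},
\]
so $\mu_1(L_n)\gg_{\epsilon}n^{1/(b+\epsilon)}$, and since $\mu_i(L_n)\geq \mu_1(L_n)$ for all $i$, we have $a_i(L_n)\geq \mu_1(L_n)^i\gg_{\epsilon} n^{i/(b+\epsilon)}$. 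The main obstacle, modest compared to the type~II case, is the clean verification of the multiplicative analog of Howard--Madapusi's ramification estimate used in the abelian setting; this is handled by the explicit binomial computation above, but one must be careful that the normalization of $v_p$ in the (possibly ramified) local field $K_\fP$ does not affect the strict inequality $v_p(p)+n\geq n+1$.
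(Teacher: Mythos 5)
Your proof is correct and follows essentially the same approach as the paper: the telescoping identity $q^{\lambda+\lambda'}-1=q^{\lambda}(q^{\lambda'}-1)+(q^{\lambda}-1)$ gives the lattice structure, positive definiteness comes from the Lorentzian signature of $K_I$ together with $Q(\omega)<0$, the binomial expansion of $(1+\pi^n u)^p-1$ gives part (2), and parts (1) and (3) follow exactly as in \Cref{p:lattices-typeII} (non-containment in special divisors, and the diophantine bound \Cref{Diopbound} applied to $m_0=\mu_1(L_n)^2$, respectively). The extra care you take with the valuation normalization in the possibly ramified extension $K_{\fP}/\Q_p$ is sound and makes explicit a detail the paper leaves implicit.
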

\begin{proof}
The proof is similar to the proof of \Cref{p:lattices-typeII}. Let $\lambda,\lambda' \in K\cap\omega^{\bot}$. 
By writing \[q^{\lambda+\lambda'}-1=q^{\lambda}(q^{\lambda'}-1)+q^{\lambda}-1,\]
we get that $L_n$ is a lattice and it is obviously positive definite as $K_I$ is Lorentzian and $\omega$ is a negative normed vector. 

Let $\pi$ be a uniformizer of $\mathcal{O}_{K_\fP}$ and let $\lambda\in L_n$. Then $q^{\lambda}=1+\pi^n.u$ for some $u\in \mathcal{O}_{K_\fP}$. Hence 
\begin{align*}
    q^{p\lambda}-1&=(1+\pi^n.u)^p-1\\
    &=\sum_{i\geq 1}\binom{p}{i}\pi^{ni}u^i\\
    &=\pi^{n+1}u'.
\end{align*}
Hence (2). The rest of the proof is similar to \Cref{p:lattices-typeII}.
\end{proof}
As a consequence, we get the following proposition, whose proof is identical to the proof of \Cref{p:lattice-estimate} and we omit it.
\begin{proposition}\label{p:lattice-estimate-III}
Let $D\in \Z_{\geq 1}$ be coprime to $N$. For $X\in \Z_{>0}$, let $S_{D,X}$ denote the set \[\{m\in \Z_{>0}\mid X \leq m<2X,\, \frac{m}{D}\in \Z \cap (\Q^\times)^2,\,(m,N)=1\}.\] Then we have
\[\sum_{m\in S_{D,X}}(\cY . \cZ(m))_\fP=o(X^{\frac{b+1}{2}}\log X).\]
\end{proposition}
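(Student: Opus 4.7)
My plan is to mimic verbatim the proof of \Cref{p:lattice-estimate} from the type II setting, replacing the lattices there by the lattices $(L_n,Q)$ of \Cref{p:lattice-decayIII}. The input needed is precisely what \Cref{p:lattice-decayIII} supplies: a decreasing sequence of positive definite lattices of common rank $r \leq b-1$ with trivial intersection, with $pL_n \subseteq L_{n+1}$, and with successive minima products satisfying $a_i(L_n) \gg_\epsilon n^{i/(b+\epsilon)}$. Combined with the local expression
\[
(\cY.\cZ(m))_{\fP} = \frac{1}{d}\sum_{n\geq 1}|\{\lambda\in L_n\mid Q(\lambda)=m\}|
\]
obtained right before \Cref{p:lattice-decayIII}, the strategy should go through unchanged.

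First I would exchange the order of summation to write
\[
\sum_{m\in S_{D,X}}(\cY.\cZ(m))_{\fP} \;\leq\; \sum_{n\geq 1}\sum_{m\in S_{D,X}}|\{\lambda\in L_n\mid Q(\lambda)=m\}|.
\]
Then I would invoke \cite[Lemma 2.4]{EK} to estimate the inner sum purely in terms of the successive minima:
\[
\sum_{m\in S_{D,X}}|\{\lambda\in L_n\mid Q(\lambda)=m\}| \;\ll\; \sum_{j=0}^{r}\frac{X^{j/2}}{a_j(L_n)}.
\]
The crucial truncation step is to cap the range of $n$: if $\lambda\in L_n$ realizes some $Q(\lambda)=m \in S_{D,X}$, then $\mu_1(L_n)^2 \leq m < 2X$, so the Diophantine bound \Cref{Diopbound} applied to $m_0 = \mu_1(L_n)^2$ forces $n \ll_\epsilon X^{(b+\epsilon)/2}$.

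Plugging in the successive-minima bound $a_j(L_n)\gg_\epsilon n^{j/(b+\epsilon)}$ and summing, I obtain
\[
\sum_{m\in S_{D,X}}(\cY.\cZ(m))_{\fP} \;\ll_\epsilon\; \sum_{j=0}^{r}\sum_{n=1}^{O_\epsilon(X^{(b+\epsilon)/2})} \frac{X^{j/2}}{n^{j/(b+\epsilon)}} \;\ll_\epsilon\; \sum_{j=0}^{r} X^{j/2 + \left(1-\tfrac{j}{b+\epsilon}\right)\tfrac{b+\epsilon}{2}} \;=\; O\!\left(X^{(b+\epsilon)/2}\right).
\]
Since $\epsilon > 0$ is arbitrary and $(b+\epsilon)/2 < (b+1)/2$ for $\epsilon < 1$, this is $o(X^{(b+1)/2}\log X)$, as desired.

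There is no essentially new obstacle here: all the genuine work has been done in \Cref{p:lattice-decayIII}, where the Diophantine bound replaces the good-reduction deformation-theoretic input used in \cite{sstt}. The only minor point to verify — and the place where the type III case could in principle differ from type II — is that the rank $r \leq b-1$ is still strictly less than $b+\epsilon$, so that the geometric series in $n$ is controlled by the upper endpoint rather than being logarithmically divergent; this is automatic. The rest is bookkeeping.
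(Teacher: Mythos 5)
Your proof is correct and takes exactly the route the paper takes: the paper states that the proof of \Cref{p:lattice-estimate-III} is identical to that of \Cref{p:lattice-estimate} (with the lattices of \Cref{p:lattice-decayIII} replacing those of \Cref{p:lattices-typeII}) and omits it. Your observation that the rank bound $r \leq b-1 < b+\epsilon$ still makes the $n$-sum controlled by its upper endpoint is the one place where the type III case could have differed, and you correctly note it does not.
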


%%%%%%%%%%%%%%%%%%%%%%%%%%%%%%%%%%%%%%%%%%%%%%%%%%%%

\subsection{Function field setting}
In this section, we prove \Cref{p:local-good-ff}. We assume here that the lattice $(L,Q)$ is self-dual at $p$ and we let $\mathcal{M}_{\F_p}$ be the mod $p$ GSpin Shimura variety associated to $(L,Q)$. Let $\Sigma$ be a polyhedral cone decomposition which satisfies \Cref{p:choice-tor-ff}. 
\medskip

Let $\overline{\scrS}\rightarrow \mathcal{M}^{\Sigma}_{\mathbb{F}_p}$ be a finite map as before and let $P\in \overline{\scrS}(\overline{\F}_p)$ be a point mapping to the boundary of $\mathcal{M}^{\Sigma}_{\mathbb{F}_p}$. Let denote $k=\overline{\F}_p$. The point $P$ lies either in a boundary stratum of of type II or type III. We treat each case separately.

\subsubsection{Type II degeneration} 

Assume that the image of $P$ is in $\cB^{\Upsilon}_{\mathbb{F}_p}(k)$ where $\Upsilon$ is a cusp label representative of type II. 

Let $\widehat{\overline{\mathscr S}}\simeq\mathrm{Spf}(k[[t]])$ be the formal completion of $\overline{\scrS}$ along $s$. Then by reasoning similarly to \Cref{s:typeII-bad-nf}, specifically using the reduction mod $p$ of \Cref{diagram-typeII}, we get for every $\lambda\in D$ with $Q(\lambda)=m\geq 1$, $m$ coprime to $N$ a map 

\[\Phi_p:\widehat{\mathcal{O}}_{\mathcal{M}_{\Upsilon,\mathbb{F}_p,x}}\rightarrow k[[t]],\]

Let $v(\lambda)$ denote the $t$-adic valuation of the generator $f_\lambda$ of $I_{\lambda,p}$. Then similarly to the number field case, we have: 
\begin{lemma}
The multiplicity of intersection of $\overline{\scrS}$ and $\cZ(m)_{\F_p}$ at $P$ satisfies: 
\[m_P(\overline\scrS,\cZ(m)_{\F_p})\ll\sum_{n\geq 1}|\{\lambda\in L_n|\,Q(\lambda)=m\}|.\]
\end{lemma}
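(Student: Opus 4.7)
The plan is to transport the number-field argument of \Cref{s:typeII-bad-nf} verbatim to the mod $p$ setting, using the self-duality of $(L,Q)$ at $p$ to ensure that the same formal models of $\cZ(m)$ are available. First I would pick an \'etale local coordinate $t$ on $\overline{\scrS}$ at $P$ and use the finite \'etale cover
\[\widehat{\cM}_{\Upsilon,\F_p}\longrightarrow \widehat{\cM^{\Sigma}}_{\F_p}\]
coming from the mod $p$ reduction of \Cref{eq:iso-comp-int} to lift the period map $\mathrm{Spf}(k[[t]])\to\widehat{\cM^\Sigma}_{\F_p}$ to a map into $\widehat{\cM}_{\Upsilon,\F_p}$. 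Composing with the chart from the mod $p$ version of \Cref{diagram-typeII}, this provides a ring homomorphism $\Phi_p:\widehat{\cO}_{\cM_{\Upsilon,\F_p},x}\to k[[t]]$; degree considerations show that the intersection multiplicity we want to bound differs from the sum of $t$-adic orders of vanishing of the pullbacks of local equations of $\cZ(m)$ only by the degree of the \'etale cover at the chosen point.

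Next I would invoke \Cref{ss:dia-sp-II}, which applies mod $p$ since $m$ is coprime to $p$ and $(L,Q)$ is self-dual at $p$ (so $\cZ(m)$ is flat over $\Z_{(p)}$ by \cite[Proposition 7.2.2]{howardmadapusi}). It identifies the formal completion of $\cZ(m)$ along the type II boundary with the union, over $\lambda\in D$ with $Q(\lambda)=m$, of the vanishing loci of the ideals $\widehat{I}_{\lambda,p}=\F_p[[X]]\widehat\otimes \widehat I_\lambda$; each is generated by a single element $f_\lambda$ since $\cZ(m)$ is Cartier. Writing $v(\lambda):=v_t(\Phi_p(f_\lambda))$ we get
\[
m_P(\overline{\scrS},\cZ(m)_{\F_p})\;\leq\;\sum_{\underset{Q(\lambda)=m}{\lambda\in D}} v(\lambda).
\]

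Finally I would rewrite the right-hand side by layer-cake: setting $L_n=\{\lambda\in D:v(\lambda)\geq n\}$ as in the number-field case, we have $v(\lambda)=\sum_{n\geq 1}\mathbf{1}_{\lambda\in L_n}$, so
\[
\sum_{\underset{Q(\lambda)=m}{\lambda\in D}} v(\lambda)\;=\;\sum_{n\geq 1}|\{\lambda\in L_n\,:\,Q(\lambda)=m\}|,
\]
yielding the claim. The main (minor) obstacle is checking that the mod $p$ analogues of the \'etaleness of the boundary chart and of \Cref{ss:dia-sp-II} genuinely go through; both rely only on the self-duality of $L$ at $p$ together with the general structural results of \cite{howardmadapusi,madapusitor} already cited, which give the required integral description of $\widehat{\cZ}(m)$ and of the torus/Kuga--Sato chart $D\otimes\mathcal{E}\to\overline{\cM}_\Upsilon\to\cM^h_\Upsilon$ over $\Z_{(p)}$, so no new input is needed.
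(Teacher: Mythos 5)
Your proposal is correct and matches the paper's argument. The paper's own proof for this lemma is essentially the phrase ``by reasoning similarly to \Cref{s:typeII-bad-nf}, specifically using the reduction mod $p$ of \Cref{diagram-typeII}'': you have simply written out that argument in detail --- lifting along the mod $p$ \'etale boundary chart, applying \Cref{ss:dia-sp-II} (valid here since $m$ is prime to $p$ and $(L,Q)$ is self-dual at $p$), reducing to the $t$-adic orders of vanishing $v(\lambda)$, and using the layer-cake identity $v(\lambda)=\sum_{n\geq 1}\mathbf{1}_{\lambda\in L_n}$ --- which is precisely the route the paper intends.
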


Now we are ready to prove \Cref{t:local-bad-ff}.

\begin{proposition}
The sequence of lattices $(L_n,Q)$ satisfy the same properties as in \Cref{p:lattices-typeII} and letting $S$ be as in Section 4.2, we have the following estimate for $X>0$: 
\[\sum_{m\in S_X}m_P(\overline\scrS,\cZ(m)_{\F_p})=O_\epsilon(X^{\frac{b+\epsilon}{2}})\]
\end{proposition}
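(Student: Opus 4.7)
The plan is to transport the proof of \Cref{p:lattice-estimate} (the type II case over number fields) to characteristic $p$, replacing $\mathcal{O}_{K_\fP}$ with $k[[t]]$, the normalized $\fP$-adic valuation with the $t$-adic valuation, and the arithmetic height bound \Cref{Diopbound} by its function field counterpart \Cref{p:globalff}. Concretely, the formal completion $\widehat{\overline{\scrS}} \simeq \mathrm{Spf}(k[[t]])$ lifts through the étale cover of the mod $p$ version of \Cref{diagram-typeII} to a point of $\widehat{\mathcal{M}}_{\Upsilon,\F_p}$, and for each $\lambda \in D$ we set $v(\lambda) := v_t(\Psi(f_\lambda))$, where $f_\lambda$ is the generator of the principal ideal $\widehat{I}_\lambda$ defining the $\lambda$-branch of the special divisor. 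The preceding lemma then gives $m_P(\overline{\scrS}, \cZ(m)_{\F_p}) \ll \sum_{n \geq 1} |\{\lambda \in L_n : Q(\lambda) = m\}|$.

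The first task is to verify the three properties of \Cref{p:lattices-typeII} for $(L_n, Q)$. Property (1) and the subgroup property follow by the same characteristic-free argument as in the number field case: $\widehat{I}_{\lambda+\lambda'} \subset \widehat{I}_\lambda + \widehat{I}_{\lambda'}$ coming from the tensor structure of $D \otimes \widehat{\mathcal{E}}$, together with the hypothesis that $\overline{\scrS}$ is not contained in any $\cZ(m)_{\F_p}$. Property (3) on the growth of the successive minima follows formally from \Cref{p:globalff}, which yields $(\cZ(m)_{\F_p}.\overline{\scrS}) = O_\epsilon(m^{(b+\epsilon)/2})$; taking $\lambda \in L_n$ of minimal length then gives $n \ll_\epsilon \mu_1(L_n)^{b+\epsilon}$, hence $a_i(L_n) \geq \mu_1(L_n)^i \gg_\epsilon n^{i/(b+\epsilon)}$.

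The one step where characteristic $p$ enters nontrivially is property (2), $pL_n \subseteq L_{n+1}$. Here $\widehat{I}_{p\lambda}$ is the pullback of $\widehat{I}_\lambda$ under the composition $D \otimes \widehat{\mathcal{E}} \xrightarrow{(\lambda,\cdot)} \widehat{\mathcal{E}} \xrightarrow{[p]} \widehat{\mathcal{E}}$. In characteristic $p$ the multiplication-by-$p$ map on the formal group $\widehat{\mathcal{E}}$ is purely inseparable — of degree $p$ if the elliptic curve at the image of $P$ in $\mathcal{M}^h_{\Upsilon,\F_p}$ is ordinary, of degree $p^2$ if supersingular — so pulling back a local equation at the origin by $[p]$ scales its $t$-adic valuation by at least $p$. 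Consequently $v(p\lambda) \geq p \cdot v(\lambda) \geq pn \geq n+1$ for $n \geq 1$, which gives property (2) and also confirms that all $L_n$ have the same rank. This ramification step is where I expect to take the most care, though it is classical and independent of the ordinary/supersingular dichotomy for the auxiliary elliptic curve.

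With the three properties established, the summation argument is formally identical to the one in \Cref{p:lattice-estimate}. The bound of \cite[Lemma 2.4]{EK} gives
\[\sum_{m \in S_X} |\{\lambda \in L_n : Q(\lambda) = m\}| \ll \sum_{j=0}^{r} \frac{X^{j/2}}{a_j(L_n)},\]
and only indices $n \ll_\epsilon X^{(b+\epsilon)/2}$ contribute since $\mu_1(L_n)^2 \leq 2X$ whenever $L_n$ meets $Q^{-1}(S_X)$. Combining the two estimates gives
\[\sum_{m \in S_X} m_P(\overline{\scrS}, \cZ(m)_{\F_p}) \ll_\epsilon \sum_{j=0}^{r}\sum_{n=1}^{O_\epsilon(X^{(b+\epsilon)/2})} \frac{X^{j/2}}{n^{j/(b+\epsilon)}} = O_\epsilon\bigl(X^{(b+\epsilon)/2}\bigr),\]
which is the claimed bound.
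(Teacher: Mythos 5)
Your proposal is correct and follows the paper's argument exactly: the paper simply asserts that the proof of Proposition~\ref{p:lattices-typeII} carries over and then performs the same summation, while you fill in the one step that genuinely changes in characteristic $p$ — that multiplication by $p$ on the formal group $\widehat{\mathcal{E}}$ is purely inseparable, so pulling back through $[p]$ scales the $t$-adic valuation by a factor of at least $p$, giving $pL_n \subseteq L_{n+1}$. The substitution of \Cref{p:globalff} for \Cref{Diopbound} in deducing the growth of the successive minima is the intended move, and the rest of the argument is the same geometry-of-numbers computation via \cite[Lemma 2.4]{EK}.
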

\begin{proof}
The same proof as in \Cref{p:lattices-typeII} shows that the lattices $(L_n,Q)$ enjoy the same properties of the aforementioned proposition. For the second part, we have
\begin{align*}
    \sum_{m\in S_X}m_P(\overline\scrS,\cZ(m)_{\F_p})&\ll\sum_{m\in S_X}\sum_{n\geq 1}|\{\lambda\in L_n|\,Q(\lambda)=m\}|\\
    &\ll\sum^{O(X^{\frac{b+\epsilon}{2}})}_{n=1} |\{\lambda\in L_n| Q(\lambda)\leq m\}|\\
    &\ll\sum^{O(X^{\frac{b+\epsilon}{2}})}_{n=1}\sum_{j=0}^{r}\frac{X^{\frac{j}{2}}}{a_j(L_n)}\\
    &\ll \sum_{j=0}^{r}\sum^{O(X^{\frac{b+\epsilon}{2}})}_{n=1}\frac{X^{\frac{j}{2}}}{n^{\frac{j}{b+\epsilon}}}\\
    &=O(X^{\frac{b+\epsilon}{2}}).
\end{align*}
\end{proof}
\subsubsection{Type III degeneration}
Assume now that there exists a toroidal stratum representative $(\Xi,\sigma)$ such that $\sigma$ is a ray and such that $P$ lies in $\cB^{\Xi,\sigma}_{\mathbb{F}_p}(k)$. 
Using a similar approach to \Cref{s:typeIII-bad-red} by taking reduction mod $p$, we get a map

\[k[q_\alpha|\,\alpha\in\Gamma_\Xi^{\vee}\cap \omega^{\bot}][[q_{\omega'}]]\rightarrow k[[t]],\]
sending $q_{\omega'}$ to an element of the ideal $(t)$.
Let $v$ denote the $t$-adic valuation on $k[[t]]$. Then, for $m$ coprime to $N$, the multiplicity of intersection of $\overline{\scrS}$ and $\cZ(m)_{\F_p}$ at $P$ satisfies: 
\[m_P(\overline\scrS,\cZ(m)_{\F_p})\leq\sum_{\underset{Q(\lambda)=m}{\lambda\in K_I\cap \omega^{\bot}}}v(q^{\lambda}-1).\]
If we define the sequence lattices $L_n$ as 
\[L_n=\{\lambda\in K\cap \omega^{\bot}|v(q^\lambda-1)\geq n,\}\]
then
\[m_P(\overline\scrS,\cZ(m)_{\F_p})\leq \sum_{n\geq 1}|\{\lambda\in L_n,\,Q(\lambda)=m\}|.\]

Now the rest of the proof is similar to \Cref{s:typeIII-bad-red}. This proves \Cref{t:local-bad-ff} in the remaining type III case.

\section{Applications}
In this section, we present a proof of \Cref{t:hecke-orbit}. This approach is inspired from \cite{maulik-shankar-tang-K3}.
\subsection{Hecke orbit conjecture}
\subsubsection{The orthogonal case}
Since GSpin Shimura varieties are finite covers of orthogonal ones, it is enough to prove the result for GSpin Shimura varieties.

Let $\mathcal{M}_{\F_p}$ be the reduction mod $p\geq 5$ of a GSpin type Shimura variety with hyperspecial level at $p$ associated to a lattice $(L,Q)$, which is assumed to be self-dual at $p$ and of signature $(b,2)$.  We will  prove \Cref{t:hecke-orbit} by induction on $b$, which is also the dimension of $\mathcal{M}_{\F_p}$.

The case $b=1$ is immediate: the prime-to-$p$ Hecke orbit of $x$ is infinite, hence Zariski dense. 
\medskip 

Assume now that $n\geq 2$ and the result of \Cref{t:hecke-orbit} holds for all ordinary points in GSpin Shimura varieties of dimension less than $b-1$ with hyperspecial level at $p$. Let $x$ be an ordinary point in $\mathcal{M}(\overline{\F}_p)$ and let  $\overline{T_x}$ be the Zariski closure of its prime-to-$p$ Hecke orbit. Then $\overline{T_x}$ has positive dimension and intersects the ordinary locus non-trivially. Hence we can find a smooth  quasi-projective curve $\scrS$ and a finite map 
\[\scrS\rightarrow \mathcal{M}_{\mathbb{F}_p}\] whose image is contained in  $\overline{T_x}$ and which is contained in the ordinary locus. Moreover, we can assume that this image is not contained in any special divisor. Indeed, the same argument used for proper curves in \cite[Lemma 8.11]{maulik-shankar-tang-K3} works in our setting with no change. By \Cref{th:K3ff}, the curve $\scrS$ intersects infinitely many divisors $\cZ(m)_{\F_p}$ with $(m,p)=1$. The special divisors $\cZ(m)_{\F_p}$ are themselves the union of GSpin Shimura varieties of dimension $b-1$ with hyperspecial level at $p$ since $m$ is coprime to $p$. Let $y\in\scrS(\overline{\mathbb{F}}_p)\cap\cZ'(m)(\overline{\F}_p)$ for some irreducible component $\cZ'(m)$ of $\cZ(m)$. Then $y$ is ordinary and the prime-to-$p$ Hecke orbit of $y$ in $\cZ'(m)_{\F_p}$ is Zariski dense by the induction hypothesis. Since this orbit is a sub-orbit of the Hecke orbit in $\mathcal{M}_{\F_p}$, we conclude that $\cZ'(m)_{\F_p}\subset \overline{T_x}$. Furthermore, it is straightforward to check that the collection of the divisors $\cZ'(m)_{\F_p}$ must be infinite by \Cref{th:K3ff}. Hence we conclude that $\overline{T_x}=\mathcal{M}_{\F_p}$ which is the desired result. 
\subsubsection{The unitary case}
We prove in this section the Hecke orbit conjecture in the unitary case using the reduction to the orthogonal case already used in \cite[Remark 8.12]{maulik-shankar-tang-K3} and in \cite[Section 9.3]{sstt}.

Let $\mathcal{M}_{\F_p}$ be the mod $p$ points of the canonical model of a unitary Shimura variety associated to an imaginary quadratic field $\mathbf{k}$, a unitary group of signature $(r,1)$ with hyperspecial level at $p$ as described in \cite[Section 2.1]{bruinier-howard} such that $p$ is split in $\mathbf{k}$. Consider the family of special divisors $\cZ_{Kra}(m)$ as described in Section 2.5 {\it loc.} {\it cit.} which are themselves unitary Shimura varieties associated to unitary groups of signature $(r-1,1)$ and hyperspecial at $p$ when $p$ does not divide $m$. Then using a similar argument to \cite[Section 9.3]{sstt} and further explained in \cite[Remark 8.12]{maulik-shankar-tang-K3}, we have the following theorem which is a consequence of \Cref{th:K3ff}. 

\begin{theorem}\label{unitary}
Assume that $p\geq 5$ and let $\mathscr{S}\rightarrow \mathcal{M}_{\F_p}$ be a finite map from a smooth quasi-projective curve $\mathscr{S}$ over $\overline{\mathbb{F}}_p$ and with generically ordinary image. Then the union over $m$ prime to $p$ of the intersections $\scrS\cap \cZ_{Kra}(m)$ is infinite.
\end{theorem}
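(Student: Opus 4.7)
The plan is to deduce \Cref{unitary} from \Cref{th:K3ff} by embedding the unitary Shimura variety into a GSpin one. The Hermitian space $(V, h)$ of signature $(r,1)$ over $\mathbf{k}$, viewed over $\Q$ via restriction of scalars and equipped with the trace form $Q(v) = \mathrm{tr}_{\mathbf{k}/\Q} h(v,v)$, is a rational quadratic space of signature $(2r, 2)$. This yields an embedding of algebraic groups $\mathrm{GU}(V) \hookrightarrow \mathrm{GSpin}(V_\Q)$, and for a compatible choice of integral Hermitian lattice and its underlying $\Z$-lattice $(L,Q)$ one obtains a closed embedding of integral canonical models
\[
\iota \colon \mathcal{M}^{\mathrm{unitary}}_{\Z_{(p)}} \hookrightarrow \mathcal{M}^{\mathrm{GSpin}}_{\Z_{(p)}}.
\]
The hyperspecial level at $p$ and the hypothesis that $p$ splits in $\mathbf{k}$ ensure that $(L,Q)$ is self-dual at $p$, so the GSpin side has smooth reduction $\mathcal{M}^{\mathrm{GSpin}}_{\F_p}$ and \Cref{th:K3ff} is applicable there.

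Next I would compare special divisors. The pullback under $\iota$ of the GSpin special divisor $\mathcal{Z}(m)$ decomposes, for $m$ coprime to $p$, as a finite union of Kr\"amer special divisors $\mathcal{Z}_{\mathrm{Kra}}(m')$ on the unitary side, where each $m'$ arises from $m = m' + h(\lambda,\lambda)$-type norm decompositions of a special endomorphism, so $m'$ is prime to $p$ whenever $m$ is. This compatibility is the one used in \cite[Section 9.3]{sstt} and \cite[Remark 8.12]{maulik-shankar-tang-K3}; I would verify it explicitly for the Kuga--Satake abelian scheme, which on the unitary side carries an $\cO_\mathbf{k}$-action that splits the space of special endomorphisms into its $\pm$-eigenspaces, producing the factorization of $\mathcal{Z}(m)|_{\mathcal{M}^{\mathrm{unitary}}}$.

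With these structural ingredients in place, I would push the curve $\mathscr{S} \to \mathcal{M}^{\mathrm{unitary}}_{\F_p}$ forward to a finite map $\mathscr{S} \to \mathcal{M}^{\mathrm{GSpin}}_{\F_p}$ whose generic image is again ordinary, using the fact that the forgetful map preserves the Newton stratification on the ordinary locus when $p$ splits in $\mathbf{k}$. I must also check that the image is not contained in any GSpin special divisor $\mathcal{Z}(m)_{\F_p}$: if it were, then by the divisor decomposition above, either $\mathscr{S}$ would lie in some $\mathcal{Z}_{\mathrm{Kra}}(m')_{\F_p}$ (which one excludes by a small additional genericity assumption, or handles by induction on $r$), or the image would lie in a component coming from a proper unitary sub-Shimura subvariety, which can likewise be ruled out by replacing $\mathscr{S}$ with a general member of its Hecke orbit as in \cite[Lemma 8.11]{maulik-shankar-tang-K3}. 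Then \Cref{th:K3ff} applies and yields infinitely many $m$ prime to $p$ with $\mathscr{S} \cap \mathcal{Z}(m)_{\F_p} \neq \emptyset$; translating back, this gives infinitely many $m'$ prime to $p$ with $\mathscr{S} \cap \mathcal{Z}_{\mathrm{Kra}}(m')_{\F_p} \neq \emptyset$.

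The main obstacle is the bookkeeping in the second step: making the decomposition of $\iota^\ast \mathcal{Z}(m)$ into unitary Kr\"amer divisors completely precise at the level of integral models (and in particular at the boundary, where the discriminant-prime-to-$p$ hypothesis is used to control the formal picture from Sections 2 and 5), and ensuring that the indices $m'$ obtained are indeed coprime to $p$ and exhaust infinitely many distinct values as $m$ varies. Once this dictionary is set up, together with the preservation of generic ordinarity and non-containment in special divisors, the deduction from \Cref{th:K3ff} is formal.
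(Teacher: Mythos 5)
Your approach is essentially the paper's: the paper's proof of \Cref{unitary} is exactly the reduction you describe---embed the unitary Shimura variety into a GSpin one via restriction of scalars, match the Kr\"amer special divisors against the GSpin special divisors $\mathcal{Z}(m)$, and invoke \Cref{th:K3ff}---with the technical details of the divisor comparison delegated to \cite[Section 9.3]{sstt} and \cite[Remark 8.12]{maulik-shankar-tang-K3}, and the hyperspecial-at-$p$ hypothesis on both sides ensured by $p$ being split in $\mathbf{k}$ and the discriminant being prime to $p$.

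One spot where your write-up is looser than it needs to be is the handling of the non-containment hypothesis of \Cref{th:K3ff}. You suggest either adding a genericity assumption or replacing $\mathscr{S}$ by a general member of a Hecke orbit; neither is available, since \Cref{unitary} asserts the conclusion for the given $\mathscr{S}$. The clean observation is that if the image of $\mathscr{S}$ in the GSpin variety were contained in some $\mathcal{Z}(m_0)_{\F_p}$, then by your divisor decomposition $\mathscr{S}$ would be contained in some $\mathcal{Z}_{\mathrm{Kra}}(m')_{\F_p}$, and (using the eigenspace argument that guarantees an index prime to $p$) the conclusion of \Cref{unitary} then holds trivially because $\mathscr{S}\cap\mathcal{Z}_{\mathrm{Kra}}(m')=\mathscr{S}$ is already infinite. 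So one may assume without loss of generality that the GSpin image is not contained in any special divisor, and \Cref{th:K3ff} applies directly; no extra hypothesis is needed. Also, strictly speaking one does not have a literal embedding $\GU(V)\hookrightarrow\GSpin(V_\Q)$ of algebraic groups---one has a map to $\mathrm{GSO}(V_\Q)$ and must lift through the covering $\GSpin\to\mathrm{GSO}$, which is the content of the Shimura-data comparison in the references---but this does not affect the argument.
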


Now the Hecke orbit conjecture in the unitary case is an easy consequence of the above theorem and the induction method explained in the previous paragraph. 
\bibliographystyle{alpha}
\bibliography{bibliographie}
\end{document}